\newtheorem{theorem}{Theorem}[section]
\newtheorem{definition}[theorem]{Definition}
\newtheorem{example}[theorem]{Example}
\newtheorem{corollary}[theorem]{Corollary}
\newtheorem{lemma}[theorem]{Lemma}
\newtheorem{proposition}[theorem]{Proposition}
\newtheorem{remark}[theorem]{Remark}
\newtheorem*{theorem*}{Theorem}
\newtheorem*{definition*}{Definition}
\newtheorem*{lemma*}{Lemma}
\DeclareMathOperator{\im}{im}
\DeclareMathOperator{\sgn}{sgn}
\DeclareMathOperator{\Hom}{Hom}
\DeclareMathOperator{\ad}{ad}
\DeclareMathOperator{\Tr}{Tr}
\DeclareMathOperator{\Cent}{Cent}
\newcommand{\fsl}{\mathfrak{sl}}
\newcommand{\fso}{\mathfrak{so}}
\newcommand{\fosp}{\mathfrak{osp}}
\newcommand{\fg}{\mathfrak{g}}
\newcommand{\fo}{\mathfrak{o}}
\newcommand{\fA}{\mathfrak{A}}
\newcommand{\bbc}{\mathbb{C}}
\newcommand{\bbn}{\mathbb{N}}
\newcommand{\bbr}{\mathbb{R}}
\newcommand{\bbs}{\mathbb{S}}
\newcommand{\bbz}{\mathbb{Z}}
\newcommand{\Cc}{\mathcal{C}}
\newcommand{\Cf}{\mathcal{F}}
\newcommand{\Cg}{\mathcal{G}}
\newcommand{\Cs}{\mathcal{S}}
\newcommand{\Cv}{\mathcal{V}}
\newcommand{\Cw}{\mathcal{W}}
\newcommand{\bq}{\mathbf{q}}
\newcommand{\lpi}{\langle}
\newcommand{\rpi}{\rangle}
\newcommand{\Pin}{\mathsf{Pin}}
\newcommand\mygamma{{I} \hspace{-.65ex} \varGamma}
\newcommand{\D}{\mygamma} 
\newcommand{\rca}{\mathsf{H}}
\newcommand{\ama}{\mathsf{A}}
\newcommand{\tama}{O_{t,c}}
\newcommand{\tamaz}{O_{t,0}}
\newcommand{\centre}{Z}
\newcommand{\anZ}{\centre^{\textup{anti}}}
\newcommand{\grZ}{\centre^{\textup{gr}}}
\newcommand{\ugZ}{\centre^{\textup{ug}}}
\newcommand{\scasi}{\mathcal{S}}
\newcommand{\clif}{\mathcal{C}}
\newcommand{\C}{\omega} 
\newcommand{\rhoC}{\rho(\omega)}
\newcommand{\css}{\Omega_c} 
\newcommand{\oO}{\tilde\sigma} 
\newcommand{\rhow}{\rho({\tilde{w}})} 
\newcommand{\wlong}{\rho(\tilde{w}_0)} 
\newcommand{\hc}{\rca \clif} 
\title{The centre of the Dunkl total angular momentum algebra}
\author{Kieran Calvert, Marcelo De Martino, Roy Oste}
\date{}
\begin{document}

\maketitle

\begin{abstract} 
For a finite dimensional representation $V$ of a finite reflection group $W$, we consider the rational Cherednik algebra $\mathsf{H}_{t,c}(V,W)$ associated with $(V,W)$ at the parameters $t\neq 0$ and $c$. 
The Dunkl total angular momentum algebra $O_{t,c}(V,W)$ arises as the 
centraliser algebra of the Lie superalgebra $\mathfrak{osp}(1|2)$ containing a Dunkl deformation of the Dirac operator, inside the tensor product of $\mathsf{H}_{t,c}(V,W)$ and the Clifford algebra generated by $V$.

We show that, for every value of the parameter $c$, the centre of $O_{t,c}(V,W)$ is isomorphic to a univariate polynomial ring.
Notably, the generator of the centre changes depending on whether or not $(-1)_V$ is an element of the group $W$. Using this description of the centre, and using the projection of the pseudo scalar from the Clifford algebra into $O_{t,c}(V,W)$, we establish results analogous to ``Vogan's conjecture'' for a family of operators depending on suitable elements of the double cover $\tilde{W}$.

\end{abstract}

\section{Introduction}

The total angular momentum operators, which contain besides the orbital angular momentum also a spin angular momentum term, occur as symmetries of a Dirac Hamiltonian (see the introduction of \cite{DBOVJ18} for a brief overview).  
When one considers a Dunkl deformed version of a Dirac equation or operator, its symmetries form a deformation of the total angular momentum algebra. 
In this article, we continue the study of the Dunkl total angular momentum algebra for arbitrary real reflection groups. 
This is related to the theory of Howe dual pairs, which we will now explain. 

Let $(V_0,B_0)$ be a Euclidean pair with $V_0 \cong \mathbb{R}^d$ a real vector space and let $(V,B)$ be its complexification. 
Let also $\mathsf{O}(d) = \mathsf{O}(V,B) \subset GL(V)$ denote the orthogonal group of the pair $(V,B)$. 
Denote by $\mathcal{W} = \mathcal{W}(V)$ the Weyl algebra of polynomial coefficients partial differential operators acting on the polynomial space $\mathbb{C}[V]$. 
As is well known (see, e.g., \cite[Section 4, item (a)]{Ho89}), the Laplacian and its dual symbol, the squared norm, are $\mathsf{O}(d)$-invariant elements inside $\mathcal{W}$, and they generate a Lie algebra isomorphic to $\mathfrak{sl}(2,\mathbb{C})$. 
Moreover, the associative subalgebra generated by this realisation of $\mathfrak{sl}(2,\mathbb{C})$ gives all $\mathsf{O}(d)$-invariants in $\mathcal{W}$. 
The pair $(\mathsf{O}(d),\mathfrak{sl}(2))$ just described is one of the simplest examples in the theory of Howe dual pairs. 
Together, they give a multiplicity-free decomposition of $\mathbb{C}[V]$ in irreducible $(\mathsf{O}(d),\mathfrak{sl}(2))$-bimodules, where the linked $\mathsf{O}(d)$- and $\mathfrak{sl}(2)$-modules uniquely determine each other. 
The $\mathsf{O}(d)$-modules in this decomposition are precisely the spherical harmonics. 

In the Weyl-Clifford algebra $\mathcal{W}\clif = \mathcal{W}\otimes\clif$, where $\clif = \clif(V,B)$ is the Clifford algebra attached to $(V,B)$, the  $\mathsf{O}(d)$-invariants include square roots of the squared norm and the Laplacian, with the square root of the latter being the Dirac operator. 
These square roots generate a Lie superalgebra isomorphic to $\mathfrak{osp}(1|2,\mathbb{C})$, which contains the above-mentioned copy of $\mathfrak{sl}(2,\mathbb{C})$ as its even subalgebra. 
Also in this case, the associative subalgebra generated by this realisation of $\mathfrak{osp}(1|2,\mathbb{C})$ gives all $\mathsf{O}(d)$-invariants in $\mathcal{W}\clif$. 
Here, the relevant Howe dual pair is $(\mathsf{Pin}(d),\mathfrak{osp}(1|2,\mathbb{C}))$, to properly account for the spin-representations of $\mathsf{O}(d)$ occurring in a similar multiplicity-free decomposition and correspondence of irreducible modules, now of the space of spinor-valued polynomials (see \cite{Ni91}, \cite{CJHW10}, and also \cite{BDSES10} for classical dualities involving the Pin-group). 

Deformations of these Howe dual pairs occur when the action of the partial derivatives is replaced by the divided-difference operators introduced by Dunkl \cite{Du89}. In other words, we are interested in seeing the dual pairs of the previous paragraphs in the context of a rational Cherednik algebra $\rca_{t,c}(V,W)$ (see \cite{EG02} and \cite{DO03}) at a parameter function $c$, $t\neq 0$ and associated with $(V,W)$, where we now see $V$ as a (faithful) representation $W \hookrightarrow \mathsf{O}(V,B) \subset GL(V)$ of a real reflection group $W$ (in general, this action does not need to be irreducible nor essential). Using the well-known PBW properties of these algebras, there is a linear embedding $\mathcal{W} \hookrightarrow \rca_{t,c}(V,W)$ (respectively, $\mathcal{W}\clif \hookrightarrow \rca\clif_{t,c} := \rca_{t,c}(V,W)\otimes \clif$) and the image of $\mathfrak{sl}(2,\mathbb{C})$ (respectively, $\mathfrak{osp}(1|2,\mathbb{C})$) under this embedding still closes into a Lie algebra (respectively, Lie superalgebra) in the Cherednik context (see \cite{He91}, \cite{DBOSS12}). Since the full action of $\mathsf{O}(d)$ is not present in $\rca_{t,c}(V,W)$ (only $W\subseteq\mathsf{O}(d)$ acts on the space of Dunkl-operators), to make sense of dual pairs, one must first compute what is the centraliser algebra for the Lie (super)algebra in question. In \cite{CD20b} (see also \cite{BSO06}) the case of $\mathfrak{sl}(2)$ was considered and results on the joint-decomposition of the polynomial space for the action of the dual pair was obtained. Similar results for $\mathfrak{osp}(1|2)$ were obtained in \cite{DBOJ18}, while in \cite{Os21} the symmetry algebra was completely determined (see also \cite{OSS09} for an initial overview of the theory of deformations of the Howe dual pairs we consider in the Cherednik context, \cite{DBGV16} for the case when $W$ is a product of groups of type $A_1$, which is used to define a higher rank Bannai--Ito algebra, and \cite{DBOVJ18}, \cite{DBLROVJ22} for other specific choices of the reflection group $W$). 

In the present work, we focus on the $\mathbb{Z}_2$-graded algebra $\tama:=\tama(V,W)$ that is defined as the symmetry algebra of the above-mentioned realisation of $\mathfrak{osp}(1|2)$ inside $\rca\clif_{t,c}$. We shall refer to this algebra as the Dunkl total angular momentum algebra, since it contains the total angular momentum operators when $c=0$ and $t=1$. Our main result (Theorem \ref{t:TamaCent}) is a full description of the centre of $\tama$ for any real reflection group $W$ acting on $V$ and for any parameter function $c$. As an application to the determination of the centre, and inspired by the successful Dirac-theories for Drinfeld algebras \cite{Ci16} we prove in Theorem \ref{t:Vogan} and \ref{t:CentChar} results analagous to the celebrated ideas of Vogan on Dirac cohomologies (see \cite{Vo97}, \cite{HP02}, \cite{Ci16}). Our results build on Dirac theories for subalgebras of the Cherednik algebra (see \cite{Ca20} and \cite{CDM22}). The theory in this paper shares similarities with the  Dunkl angular momentum algebra \cite{CDM22}, in the sense that we consider a family of operators depending on certain ``admissible'' elements (see Definition \ref{d:Dirac_C}). We remark that in the present case, instead of enlarging the algebra in question with a suitably defined Clifford algebra and define a theory using a Dirac element, we do not tensor $\tama$ with another Clifford algebra  and we use a natural element inside the algebra $\tama$ itself (see Definition \ref{d:WGamma} -- classically when $c=0$, the eigenvalues of this element are precisely the square root of the total angular momentum quantum numbers), this reflects the theory for Hecke-Clifford algebras as in \cite{Ch17}.

Finally, we give a breakdown of the contents of the paper: in Section \ref{s:Prelims}, we recall the basic definitions of the rational Cherednik algebra, Clifford algebra and double cover of the reflection group. In this section, we also recall the precise realisation of the $\mathfrak{osp}(1|2)$ Lie superalgebra and we discuss the several notions of ``centre'' that we will consider in the context of $\bbz_2$-graded algebras. Next, in Section \ref{s:TAMA}, we define the algebra $\tama$ and we recall some structural properties such as the set of generators, based on recent results obtained in \cite{Os21}. In Section \ref{s:centre}, we prove our main result on the description of the centre. The key idea in our arguments is to compare with the result when $c=0$ by introducing a parameter $\bq$ and using generic versions of the Weyl and rational Cherednik algebras. Further, in Section \ref{s:Vogan}, we discuss the application to Vogan morphism and cohomologies in our context and in the last section, we discuss in detail the set of admissible elements of the group algebra $\bbc\tilde W$.

\section{Preliminaries}\label{s:Prelims}

We start this section by defining the bilinear products and variations of centre that we will use throughout this paper.  

\begin{definition}
Let $A = A_{\overline{0}}\oplus A_{\overline{1}}$ be a $\bbz_2$-graded associative algebra. Throughout this work we will be interested in 4 different bilinear products in $A$. The first is the associative multiplication of $A$ which will be denoted by juxtaposition. Given homogeneous elements elements $x,y\in A$ of degree $|x|$ and $|y|$ we denote by
\begin{align*}
    \llbracket x,y \rrbracket &= xy -(-1)^{|x||y|} yx\\
    [x,y] &= xy - yx\\
    \{x,y\} &= xy + xy.
\end{align*}
Accordingly, we define the \textbf{ungraded, graded} and \textbf{anti} centers, respectively denoted as $\ugZ(A), \grZ(A)$ and $\anZ(A)$, by
\begin{align*}
    \ugZ(A) &= \{z \in A \mid [ z,x ]=0 \textup{ for all } x\in A \},\\
    \grZ(A) &= \{z \in A \mid \llbracket z,x \rrbracket=0 \textup{ for all } x\in A \},\\
    \anZ(A) &= \anZ_{\overline{0}}(A) \oplus \anZ_{\overline{1}}(A).
\end{align*}    
where 
\begin{align*}
\anZ_{\overline{0}}(A) &= \{z \in A_{\overline{0}} \mid \{ z,x \} =0\textup{ for all } x\in A_{\overline{1}},[ z,x ] =0\textup{ for all } x\in A_{\overline{0}} \},\\
\anZ_{\overline{1}}(A) &= \{z \in A_{\overline{1}} \mid [ z,x ] =0\textup{ for all } x\in A\}.
\end{align*}
\end{definition}

\begin{remark}
Note that $\grZ(A) = \grZ_{\overline{0}}(A) \oplus \grZ_{\overline{1}}(A)$ where
\begin{align*}
\grZ_{\overline{0}}(A) &=  \{z \in A_{\overline{0}} \mid [ z,x ] =0\textup{ for all } x\in A\},\\
\grZ_{\overline{1}}(A) &=\{z \in A_{\overline{1}} \mid [ z,x ] =0\textup{ for all } x\in A_{\overline{0}}, \{ z,x \} =0\textup{ for all } x\in A_{\overline{1}}\},
\end{align*}
which justifies the terminology anti-center.
\end{remark}

\subsection{Rational Cherednik algebras}
Fix a vector space $V\cong \bbc^d$ with $d\geq 3$ and  a non-degenerate symmetric bilinear form $B$ on $V$. We view $B$ as the complexification of a Euclidean structure on $V_0\cong \bbr^d$. 
When needed, $\{y_1,\dotsc,y_d\}\subset V$ and $\{x_1,\dotsc,x_d\}\subset V^*$ will denote real orthonormal and dual bases, so that $\delta_{j,k} =\langle x_j,y_k\rangle =  B(y_j,y_k)$ for $j,k\in\{1,\dotsc,d\}$, where $\langle-,-\rangle:V^*\times V\to \bbc$ denotes the natural bilinear pairing.

Let $\mathsf{O}(d) \colonequals \mathsf{O}(V,B)$ denote the orthogonal group of the pair $(V,B)$ and  $ \mathsf{SO}(V,B)$ denote its identity component.  
We consider a finite real reflection group $W \subset \mathsf{O}(d)$. 
Let $R\subset V^*$ denote the root system of $W$ and fix a positive system $R^+ \subset R$, and a compatible choice of simple roots $\Delta$. Let $c\colon R^+ \to \mathbb{C}$ be a $W$-invariant parameter function and denote $c(\alpha) = c_\alpha$ where $c_{w\alpha} = c_\alpha$ for all $w\in W$ and $\alpha \in R$. 

For a root $\alpha\in R$, denote by $s_{\alpha}\in W$ the reflection in the hyperplane perpendicular to $\alpha$, and by $\alpha^\vee\in V$ the coroot such that $\langle \alpha^\vee,\alpha \rangle = 2$. Fix $t\in \bbc^\times$.

\begin{definition}\label{d:RCA}
Define $\rca_{t,c} \colonequals\rca_{t,c}(V,W)$ to be the quotient of $T(V \oplus V^*) \rtimes W$
by the following relations for $y,v \in V$ and $x,u\in V^*$:
\begin{equation}
	\label{e:RC}
		[x,u] = 0 = [y,v],  \qquad 
		[y,x] = t\langle y, x\rangle -   \sum_{\alpha>0} \langle y,	\alpha\rangle\langle  \alpha^{\vee},  x \rangle  c(\alpha) s_{\alpha} .
\end{equation} 		
\end{definition}

\begin{remark}
When $c=0$ and $t=1$, we have that $\rca_{1,0}=\Cw\rtimes W$, where $\Cw$ is the Weyl algebra of polynomial coefficient differential operators associated with $V\oplus V^*$.
Note that for a general $t\in \bbc^\times$, we have $\rca_{t,0} \cong \Cw\rtimes W$. We shall refer to the $c=0$ as the classical or undeformed case.
\end{remark}

\subsection{Clifford algebras}

We consider the Clifford algebra $ \clif \colonequals \clif (V,B)$ with canonical map $\gamma \colon V\to \clif $. Let $e_j \colonequals \gamma(y_j) $ for $j\in\{1,\dotsc,d\}$, then $ \clif $ is generated by $\{e_1,\dotsc,e_d\}$ satisfying 
	\begin{equation}\label{e:clifrel}
\{e_j,e_k\} 
=2 B(y_j,y_k) = 2 \delta_{jk}\,.
		\end{equation}
The Clifford algebra is naturally $\bbz_2$-graded with $\gamma(V)$ having degree $\overline{1}$. We shall denote by $\hc_{t,c}$ the tensor product $\rca_{t,c}\otimes \clif$.

For a subset  $A \subset \{1,\dotsc,d\}$, with elements $A= \{ i_{1},i_{2},\dotsc,i_{k}\} $ such that $1\leq i_{1}<i_{2}<\cdots <i_{k}\leq d$, we denote
			$
			e_A = e_{i_{1}}e_{i_
				{2}}\cdots e_{i_{k}}
			$.
Let $e_{\emptyset} = 1$, then a basis for $\clif$ as a vector space is given by $\{e_A \mid A \subset \{1,\dotsc,d\} \}$. 

We denote the chirality operator or pseudo-scalar of the Clifford algebra as
\begin{equation}\label{e:Gamma}
    \Gamma \colonequals i^{d(d-1)/2} e_1 \dotsm e_d \in \Cc;
\end{equation}
it satisfies $\Gamma^2 = 1$. 

In the Clifford algebra, there is a realisation of the group $\Pin \colonequals \Pin(V,B)$, which is a double covering of the orthogonal group $p \colon \mathrm{Pin} \to \mathsf{O}(d)$.
A double cover of the group $W$ is defined as $\tilde W = p^{-1}(W)$.
	
For a reflection $s \in W$ let $\tilde s$ denote a preimage in $\tilde W$, so $p(\tilde s) = s$. Let $\theta$ be the nontrivial preimage of $1$ in $\tilde{W}$. The element $\theta$ is central in $\tilde{W}$ and has order two: $\theta^2 =1$. The group $W$ has presentations
\[
W = \langle s_\alpha,\alpha\in R\mid s_\alpha^2=1,s_\alpha s_\beta s_\alpha = s_\gamma,\gamma = s_\alpha(\beta)\rangle,
\]
\[
W = \langle s_\alpha,\alpha\in \Delta \mid (s_\alpha s_\beta)^{m_{\alpha,\beta}} = 1 \rangle.
\]
While the double cover has presentations \textbf{}
\begin{equation}\label{e:PinPresentation}
\tilde{W} = \langle \theta, \tilde{s}_\alpha,\alpha\in R\mid \tilde{s}_\alpha^2=1=\theta^2,\tilde{s}_\alpha \tilde{s}_\beta \tilde{s}_\alpha = \theta\tilde{s}_\gamma,\gamma = s_\alpha(\beta), \theta \text{ central}\rangle,
\end{equation}
\begin{equation}\label{e:PinPresentationmalpha}
\tilde{W} = \langle \theta, \tilde{s}_\alpha,\alpha\in \Delta \mid (\tilde{s}_\alpha \tilde{s}_\beta)^{m_{\alpha,\beta}} = (\theta)^{m_{\alpha,\beta}-1}, \theta \text{ central}\rangle.
\end{equation}The group algebra $\bbc \tilde{W}$ splits into two subalgebras 
\begin{equation}\label{e:idemptildeW}
\bbc\tilde{W} = \frac{1}{2}(1+\theta) \bbc \tilde{W} \oplus \frac{1}{2}(1-\theta)\bbc\tilde{W}.
\end{equation}
we shall denote the algebras $\frac{1}{2}(1\pm \theta)\bbc\tilde{W}$ by, respectively $\bbc\tilde{W}_\pm$. The algebra $ \bbc W_+$ is isomorphic to $\bbc W$.
 We define a diagonal map $\rho$ from $\tilde{W}$ to $\hc_{t,c} = \rca_{t,c} \otimes \clif$:

\begin{equation}\label{e:rho}
	\rho \colon \tilde W \to 	\rca_{t,c} \otimes \clif  \colon \tilde w \mapsto p(\tilde w) \otimes \tilde w 
\end{equation}
which is extended linearly to a homomorphism on the group algebra $\mathbb{C}\tilde W$.

\begin{proposition}\label{p:rhoiso}
The image $\rho(\mathbb{C}\tilde W)$ is isomorphic to $\mathbb{C}\tilde W_-$.
\end{proposition}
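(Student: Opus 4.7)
The plan is to use the central idempotent decomposition $\bbc\tilde W = \bbc\tilde W_+ \oplus \bbc\tilde W_-$ from \eqref{e:idemptildeW} and prove two claims: first, that $\rho$ annihilates $\bbc\tilde W_+$, so that $\rho(\bbc\tilde W)=\rho(\bbc\tilde W_-)$; and second, that $\rho$ restricted to the subalgebra $\bbc\tilde W_-$ is injective.

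The starting point is the observation that the element $\theta \in \tilde W$, viewed inside the Clifford algebra via $\tilde W \subset \Pin \subset \clif$, equals the scalar $-1$. Indeed, $\theta$ is by definition the unique nontrivial preimage of $1 \in W$ under the double cover $p$, and the preimage of the identity in the Pin group realised inside $\clif$ is the central subgroup $\{\pm 1\} \subset \clif^{\times}$. Consequently, from the definition \eqref{e:rho},
\[
\rho(\theta) \;=\; p(\theta) \otimes \theta \;=\; 1 \otimes (-1) \;=\; -1
\]
in $\hc_{t,c}$. For any $x \in \bbc\tilde W_+$ we have $\theta x = x$, and hence $\rho(x) = \rho(\theta x) = \rho(\theta)\rho(x) = -\rho(x)$, forcing $\rho(x)=0$. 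Thus $\bbc\tilde W_+ \subseteq \ker\rho$ and $\rho(\bbc\tilde W) = \rho(\bbc\tilde W_-)$.

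For the injectivity of $\rho$ on the subalgebra $\bbc\tilde W_-$, I would fix a set of representatives $\{\tilde w_i\}_{i=1}^{|W|}$ of $\tilde W/\{1,\theta\}$, so that $\{w_i \colonequals p(\tilde w_i)\}$ is an enumeration of $W$. Then $\{\tfrac{1}{2}(1-\theta)\tilde w_i\}_{i=1}^{|W|}$ is a basis of $\bbc\tilde W_-$, with images
\[
\rho\bigl(\tfrac{1}{2}(1-\theta)\tilde w_i\bigr) \;=\; \rho(\tilde w_i) \;=\; w_i \otimes \tilde w_i \;\in\; \rca_{t,c}\otimes \clif,
\]
using $\rho(\theta\tilde w_i) = -\rho(\tilde w_i)$. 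Linear independence of these images follows from two standard facts: by the PBW theorem for $\rca_{t,c}$, the group algebra $\bbc W$ embeds as a subspace of $\rca_{t,c}$, so the $w_i$ are linearly independent there; and each $\tilde w_i$ is invertible in $\clif$ as an element of $\Pin$, hence nonzero. A dependency $\sum_i \lambda_i(w_i \otimes \tilde w_i) = 0$ in the tensor product then forces $\lambda_i \tilde w_i = 0$ in $\clif$ for every $i$, whence $\lambda_i = 0$.

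I do not anticipate any serious obstacle here: the identification of $\theta$ with $-1 \in \clif$ is immediate from the definition of the double cover, and injectivity on $\bbc\tilde W_-$ reduces cleanly to PBW combined with the standard property of tensor products of vector spaces.
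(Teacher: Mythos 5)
Your proof is correct and follows essentially the same route as the paper: both establish $\rho(\theta)=1\otimes(-1)$ to kill $\mathbb{C}\tilde W_+$, and both verify that the image contains $|W|$ linearly independent vectors of the form $w\otimes\tilde w$, so that $\rho$ restricts to an isomorphism on $\mathbb{C}\tilde W_-$. Your version merely spells out the tensor-product linear-independence step in slightly more detail than the paper does.
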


\begin{proof}
Using the first isomorphism theorem it is sufficient to prove that the kernel of $\rho$ is $\frac{1}{2}(1+\theta) \bbc W_+$. 
Note that $\rho(\theta) = 1 \otimes -1$, therefore $\rho(1+\theta) = 0$ and hence $\bbc\tilde{W}_+$ is in the kernel. Furthermore, the image of $\rho$ contains the vectors $w \otimes p^{-1}(w)$, choosing a single element in $p^{-1}(w)$ for each $w$, one finds this space has $|W|$ linearly independent vectors. Therefore, the dimension of $\rho(\bbc \tilde{W})$ is equal to the dimension of $\bbc\tilde{W}_-$ and hence the full kernel is $\bbc\tilde{W}_+$.
\end{proof}

The tensor product $\rca_{t,c}\otimes \clif $ is $\bbz_2$-graded, inheriting the $\bbz_2$-grading from $\clif$.

\subsection{The realisation of \texorpdfstring{$\fosp(1|2)$}{osp(1|2)}}
In the undeformed case, the invariants for the action of $\mathsf{O}(d)$ in the Weyl-Clifford algebra $\Cw\otimes \clif$ are generated by the scalar products (the symmetric tensor corresponding to the bilinear form $B$, for a copy of $S^2(V^*)$ in $\Cw\Cc$), see \cite[Theorem 2.1 pg. 390]{Pr07} or \cite[Theorem 4.19]{CW12}. These elements form a realisation of the Lie superalgebra $\mathfrak{osp}(1|2)$ in $\Cw\otimes \clif$, and this realisation is preserved in the deformation $\rca\otimes \clif $. 
In terms of the $B$-orthonormal bases for $V$ and $V^*$, we can write these as 
	\begin{equation}\label{e:osp12}
	\begin{aligned}[]
		F^+ & 
		= 	\frac{1}{\sqrt{2t}}\sum_{p=1}^d x_p e_p ,& 
			F^- & 	
				= 	\frac{1}{\sqrt{2t}}\sum_{p=1}^d y_p e_p,\\
		E^+& 
		= \frac1{2t}\sum_{p=1}^d x_p^2,&
		E^-& 
		= -\frac1{2t}\sum_{p=1}^d y_p^2,\\
		H &
	= \frac{1}{t}\sum_{p=1}^d 
	\left(x_py_p + \frac{td}{2} - \Omega_c\right),
		\end{aligned}
\end{equation}
where the element $\css \colonequals \sum_{\alpha>0} c(\alpha)s_\alpha$ is central in $\bbc W$. These elements satisfy the relations
\begin{equation}\label{e:osp12re}
	\begin{aligned}[]
		\llbracket F^{+} , 	F^{-} \rrbracket & = H	,\quad
		&	\llbracket H , 	F^{\pm} \rrbracket &= \pm F^{\pm},
		&	
		\llbracket F^{\pm} , 	F^{\pm} \rrbracket & = \pm 2\,E^{\pm},
		\\	
		\llbracket E^{+} , 	E^{-} \rrbracket & = H,
		&	\llbracket H , 	E^{\pm} \rrbracket & = \pm2\,E^{\pm},	\quad	&
		\llbracket 	F^{\pm},E^{\mp}  \rrbracket & = 	F^{\mp}.	
	\end{aligned}
\end{equation}
The following elements in $U(\fosp(1|2))$ will play an important role in this manuscript.
\begin{definition}

The $\mathfrak{osp}(1|2)$ Scasimir operator is given by
\begin{equation}\label{e:Scasiosp}
\scasi = (F^-F^+ - F^+F^- - 1/2)  \in U(\fosp(1|2)),
\end{equation} 
 while the $\fosp(1|2)$ Casimir operator is given by
\begin{equation}\label{e:Casiosp}
		\Omega_\fosp = H^2
		+2(E^+E^-+E^-E^+)
		-(F^+  F^- -F^-F^+) \in U(\fosp(1|2)) .
\end{equation} 
\end{definition}
The Scasimir $\scasi$ is in the anti-centre of $U(\fosp(1|2))$ and the quadratic Casimir element $\Omega_\fosp$ is in the graded centre of $ U(\fosp(1|2))$. These two elements are related in the following well known way.
\begin{proposition}\label{p:scasisquare}
The Scasimir $\scasi$ squares to $\Omega_\fosp+\frac14$.
\end{proposition}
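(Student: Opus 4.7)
The plan is to verify $\scasi^2 = \Omega_\fosp + \tfrac{1}{4}$ by a direct computation inside $U(\fosp(1|2))$, using only the defining relations \eqref{e:osp12re}. Reading the graded brackets according to the parities of the generators, \eqref{e:osp12re} gives the concrete identities $F^+F^- + F^-F^+ = H$, $(F^\pm)^2 = \pm E^\pm$, $[H,F^\pm]=\pm F^\pm$, $[F^\pm,E^\mp]=F^\mp$, and $[E^+,E^-]=H$. A useful preliminary observation is $[H,F^-F^+] = [H,F^-]F^+ + F^-[H,F^+] = -F^-F^+ + F^-F^+ = 0$, so, setting $X \colonequals F^-F^+$, the elements $X$ and $H$ commute.

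With $A \colonequals F^-F^+ - F^+F^- = 2X - H$ one has $\scasi = A - \tfrac{1}{2}$ and $\Omega_\fosp = H^2 + 2(E^+E^- + E^-E^+) + A$. Squaring and using $XH=HX$ yields
\[
\scasi^2 - \tfrac{1}{4} = A^2 - A = (2X-H)^2 - (2X-H) = 4X^2 - 4XH + H^2 - 2X + H,
\]
so the claimed identity reduces to showing $2X^2 - 2XH - 2X + H = E^+E^- + E^-E^+$.

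To close this, I would rewrite the left-hand side as $2X(X-H-1) + H$ and, using $X - H = -F^+F^-$ together with $(F^+)^2 = E^+$, identify it with $-2F^-E^+F^- - 2X + H$. Moving $E^+$ past the leftmost $F^-$ via $F^-E^+ = E^+F^- + F^+$ and using $(F^-)^2 = -E^-$, a short calculation gives $F^-E^+F^- = -E^+E^- + H - X$. Substituting back, the expression collapses to $2E^+E^- - H$, and the identity reduces to $E^+E^- - E^-E^+ = H$, which is the commutator relation for $E^\pm$ read off of \eqref{e:osp12re}.

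The main obstacle is purely bookkeeping: tracking the sign in $(F^-)^2 = -E^-$ and handling the cubic rearrangement $F^-E^+F^-$. A structural shortcut is available: once one knows $\scasi \in \anZ(U(\fosp(1|2)))$, one gets $\scasi^2 \in \grZ(U(\fosp(1|2)))$, and combined with the fact that $\grZ(U(\fosp(1|2)))=\mathbb{C}[\Omega_\fosp]$, a PBW degree argument forces $\scasi^2 = \alpha \Omega_\fosp + \beta$ with the two scalars pinned down by evaluation on a low-dimensional module. I prefer the direct approach since it uses nothing beyond the defining relations and makes the role of each identity transparent.
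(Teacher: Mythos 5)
Your computation is correct; I checked each step against the relations \eqref{e:osp12re} (the graded brackets are read off with the right parities, $X=F^-F^+$ does commute with $H$, the rearrangement $F^-E^+F^-=-E^+E^-+H-X$ is right, and the final reduction to $[E^+,E^-]=H$ closes the identity). The paper, by contrast, does not compute anything at this point: its ``proof'' is a citation to \cite[Example 2, p.~9]{Fr96}, noting only a difference in normalisation. So your argument is genuinely different in being self-contained: it uses nothing beyond the defining relations of $\fosp(1|2)$, which is worth having here since the normalisation discrepancy with the cited source is exactly the kind of thing one wants to see verified explicitly. The structural shortcut you sketch at the end (anti-centrality of $\scasi$ forces $\scasi^2$ into the centre, then a degree argument pins down $\scasi^2=\alpha\Omega_\fosp+\beta$) is also sound in outline, but it leans on the description of $\grZ(U(\fosp(1|2)))$ as $\bbc[\Omega_\fosp]$, which would itself require a citation or proof — so your preference for the direct computation is the right call.
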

\begin{proof}
The above proposition is stated in \cite[Example 2, p. 9]{Fr96} with a different normalisation. 
\end{proof}
\begin{remark}

Note that $\Omega_{\fsl(2)} = H^2+2(E^+E^-+E^-E^+)$ is the quadratic Casimir element of the even subalgebra $\fsl(2)$ spanned by $H,E^+,E^-$. 
\end{remark}

\section{Centraliser algebra of \texorpdfstring{$\fosp(1|2)$}{osp(1|2)}}\label{s:TAMA}

\begin{definition}
The $\bbz_2$-graded algebra $\tama \colonequals O_{t,c}(V,W)$ is the graded centraliser of $\mathfrak{osp}(1|2)$, given by~\eqref{e:osp12} inside $\rca_{t,c}\otimes \clif $:
\[
O_{t,c}(V,W) \colonequals \{ \, a \in \rca\otimes \clif \mid  \llbracket a ,b \rrbracket = 0 \text{ for all } b \in \mathfrak{osp}(1|2) \,\} .
\]
\end{definition}

The elements of $\tama$ were described in~\cite{Os21}. We have that $\rho( \mathbb{C} \tilde W) \subset \tama$. 
Moreover, there is an isomorphism as $W$-modules ($\mathsf{O}(d)$-modules when $c=0$) from $\bigwedge (V)$ to a subspace of $\tama$, 
which sends $y_{i_{1}} \wedge y_{i_{2}} \wedge\cdots \wedge y_{i_{k}} \in \bigwedge^k( V)$, where   $A= \{ i_{1},i_{2},\dotsc,i_{k}\} \subset \{1,\dotsc,d\}$, to 
	\begin{equation}\label{e:OA}
	O_A = O_{i_{1}i_{2}\dotsm i_{k}} 
	= \bigg(\frac{|A| -1}{2}t  
		+ \sum_{a\in A} \oO_{a} e_{a}
		+\sum_{\{a,b\}\subset A } {M}_{ab} 
		e_{ab}	\bigg)e_A \in \tama,
\end{equation}
where $M_{ij} = x_iy_j - x_jy_i$ and the elements $\oO_j$ are defined as
\begin{equation}\label{e:Oj}
	\oO_j \colonequals \frac12\sum_{\alpha>0}  \langle y_j,\alpha \rangle \, c(\alpha) \,s_\alpha \,	\gamma({\alpha_s^{\vee}}) \in \rho( \mathbb{C} \tilde{W} ).
		\end{equation}
Note that $y_j\in \bigwedge^1 (V)$ is sent to $O_j = \oO_j$, and that an element of the form $O_{u_1\dotsm u_n} $ for $u_1,\dotsc,u_n\in V$ is skew-symmetric multilinear in its indices.

\begin{remark}
The notation $\oO_j$ is used instead of $O_j$ to emphasize that they are elements of $\rho( \mathbb{C} \tilde{W} )$, and also to more easily distinguish their occurrence in the algebra relations, section~\ref{s:rels}.
\end{remark}

\begin{remark}
When $A=  \{1,\dotsc,d\}$ in~\eqref{e:OA}, the element $O_{1\dotsm d}$ and the Scasimir $\scasi$ can be related by $\Gamma$ (see \cite[Section~3.1, page 1922]{DBOJ18} or \cite{Os21}),
\begin{equation}\label{e:SGamma}
\scasi \,\Gamma = \frac{i^{d(d-1)/2}}{t}O_{1\dotsm d}.
\end{equation}		
Furthermore, the square of the Scasimir can be written as (see \cite{Os21}),
\begin{equation}
\scasi^2 = \Omega_\fosp +\frac{1}{4} =  \frac{(d-1)(d-2)}{8} - \frac{(d-2)}{t^2}\sum_{j=1}^d (\oO_{j})^2  -\frac{1}{t^2}\sum_{1\leq j<k \leq d } ({O}_{jk})^2.
\end{equation}	
When $c=0$, the right-hand side is the quadratic Casimir of $\fso(d)$ (see Remark \ref{rem:classicalsod}, below). Its eigenvalues give the total angular momentum quantum numbers. 
\end{remark}
\noindent As a $\bbz_2$-graded algebra, $\tama$ is generated by (see \cite{Os21}) 
	\begin{itemize}
		\item $\rho(\tilde W)$, which has its usual $\mathbb{Z}_2$-degree,
		\item  the even elements 
			\begin{equation}\label{e:Oij}
				O_{ij} =  M_{ij}
				+t e_ie_j/2 + \oO_ie_j - \oO_je_i,
\end{equation}		
\item  and the odd elements 
		\begin{equation}\label{e:Oijk}
	O_{ijk} 	 = M_{ij} e_k -M_{ik} e_j +M_{jk} e_i + te_ie_je_k + \oO_i e_je_k-\oO_j e_ie_k+\oO_k e_ie_j.
	\end{equation}	
	\end{itemize}

In particular, we have~\cite{Os21}
	\begin{align*}	
	O_{klmn} & =  6 \, \mathcal A (O_{kl} O_{mn})- 8\, \mathcal A ( O_{klm}\oO_n)  \\
	& = \{O_{kl} ,O_{mn}\}- \{O_{km},O_{ln}\} + \{O_{kn},O_{ln}\} 
	\\
	& \quad -2( O_{klm}\oO_n- O_{kln}\oO_m+ O_{kmn}\oO_l - 	O_{lmn}\oO_k )\\
	O_{jklmn} & = 4 \mathcal A (O_{jkl} O_{mn})+ 48\mathcal A (O_{jkl} \oO_{m}\oO_{n}) - 36 \mathcal A (O_{jk} O_{lm}\oO_{n})
\end{align*}
where $\mathcal A$ denotes the antisymmetriser or antisymmetrizing operator, which has the following action on a multilinear expression with $n$ indices
\begin{equation}\label{e:asym}
\mathcal A (f_{u_1u_2\dotsm u_n})	= \frac{1}{n!} \sum_{s \in \mathrm{S}_n} \sgn (s) f_{u_{s(1)}} \dotsm \gamma_{u_{s(n)}}.
\end{equation}

\subsection{Relations}\label{s:rels}

In the algebra $\tama$, we have the following relations. These relations differ slightly from \cite{Os21} inasmuch as we define the Cherednik algebra with a parameter $t \neq 0$, here. For $\rho(\tilde w ) \in \mathbb{C} \tilde W_-$,
\begin{equation}
		\rho(\tilde w ) O_{u_1 \dotsb u_n} = (-1)^{|\tilde w|n} O_{p(\tilde w)\cdot u_1 \dotsb p(\tilde w)\cdot u_n} \rho(\tilde w ) .
\end{equation}
For $i,j,k,l,m,n$ distinct elements of the set $\{1,\dotsc,d\}$,
using the non-graded commutator $[A,B] = AB - BA$ and anticommutator $\{A,B\} = AB + BA$, we have the relations
	\begin{align}\label{e:15}
&	[O_{ij},\oO_{k}] -  [O_{ik},\oO_j] + [ O_{jk},\oO_i] = 0,\\  
& \{O_{ijk},\oO_l\}  - \{O_{ijl},\oO_k\} + \{O_{ikl},\oO_j\} - \{O_{jkl},\oO_i\} =  0,\label{e:16}
\end{align}	
\begin{align}
[O_{ij},O_{ki}] 
	&= tO_{jk} + [\oO_i,\oO_j]
	+\{O_{ijk},\oO_i\},\label{e:17}
	\\ 
	[O_{ij},O_{kl}]  
	&= 	\{\oO_i,O_{jkl}\}
	-\{\oO_j,O_{ikl}\},\label{e:18}
\end{align}	
	\begin{align}
	[O_{jk},O_{lmn} ]  & =  [\oO_j, O_{klmn}] - [\oO_k,O_{jlmn}],\label{e:19}\\
	[O_{jk},O_{jlm} ]  & = -tO_{klm} - \{\oO_k,O_{lm}\} - [\oO_j,O_{jklm}],\label{e:20}\\
	[O_{jk},O_{jkl} ]  & =  -\{\oO_j,O_{jl}\} - \{\oO_k,O_{kl}\}, \label{e:21}
\end{align}
	\begin{align}
\{O_{ijk},O_{ijk}\} & =  2\left( \oO_{i}^2+ 	\oO_{j}^2  + 	
								\oO_{k}^2 + O_{ij}^2 + O_{ik}^2 + 		O_{jk}^2\right) -\frac{t^2}{2}\label{e:22},\\ 
	\{O_{ijk},O_{ijl}\} & =  \{\oO_k,\oO_l\}
	    	  + \{O_{ik},O_{il}\}+ \{O_{jk},O_{jl}\},\label{e:23}\\
\{O_{ijk},O_{imn}\} & = t O_{jkmn}  	+ \{O_{jk},O_{mn}\}
	   	  +\{\oO_i,O_{ijkmn}\},	\label{e:24}\\
	\{O_{ijk},O_{lmn}\} 
 			& = 
			\{\oO_i,O_{jklmn}\} 
			- \{\oO_j ,O_{iklmn}\}
			+\{\oO_k , O_{ijlmn}\}. 	\label{e:25}
	\end{align}	

\begin{remark}\label{rem:classicalsod}
When $c=0$, the commutation relations~\eqref{e:17} and~\eqref{e:18} show that 
the linear span 
of the 2-index symmetries $O_{ij}$ forms a realisation of the Lie algebra $\fso(d)$. 
\end{remark}

\section{Centre of \texorpdfstring{$\tama$}{Oc}}\label{s:centre}

In order to determine the graded center of $\tama$, we shall first look at the classical graded center, that is, when $c=0$. 
For $c=0$, $\tamaz$ is realised inside $\hc_{t,0}=(\Cw\rtimes W)\otimes\clif$.

As a $\bbz_2$-graded algebra, the Weyl-Clifford algebra $\Cw\otimes \clif$ is generated by $\Cv = \Cv_{\bar0} \oplus \Cv_{\bar1}$ where $ \Cv_{\bar0} = V\oplus V^*$ and $\Cv_{\bar1}=V$. 
As an $\mathsf{O}(V,B)$-module, $\Cw\otimes \clif$ is isomorphic to the supersymmetric algebra $S(\Cv) = S(\Cv_{\bar0}) \otimes \bigwedge (\Cv_{\bar1})$, via the quantisation maps (see \cite[Proposition 5.4]{CW12}).

\begin{lemma}\label{lemma:Gamma}
The algebra of invariants $(\Cw\otimes\clif)^{\mathsf{O}(V,B)}$ is generated by the realisation of $\fosp(1|2)$ given by~\eqref{e:osp12}.
The algebra of invariants $(\Cw\otimes\clif)^{\mathsf{SO}(V,B)}$ is generated by $\fosp(1|2)$ and the Clifford algebra pseudo-scalar $\Gamma\in\clif$ given by~\eqref{e:Gamma}.
\end{lemma}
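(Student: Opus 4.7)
The plan is to take the first statement as given --- this is the First Fundamental Theorem of invariant theory for $\mathsf{O}(V,B)$ acting on the Weyl-Clifford algebra, and I would invoke the standard reference \cite[Theorem 4.19]{CW12} (equivalently \cite[Theorem 2.1]{Pr07}) rather than reprove it. Then the second statement will follow by an index-two argument on invariants, which converts it into a linear-algebra question about the pseudo-scalar $\Gamma$.

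More concretely, I would fix any reflection $\sigma \in \mathsf{O}(V,B)\setminus \mathsf{SO}(V,B)$, so that $\mathsf{O}(V,B) = \mathsf{SO}(V,B) \sqcup \sigma\mathsf{SO}(V,B)$, and note that $\sigma$ normalises $\mathsf{SO}(V,B)$ and hence acts as an involution on $(\Cw\otimes\clif)^{\mathsf{SO}(V,B)}$. Decomposing this space into $(\pm 1)$-eigenspaces of $\sigma$ gives
\[
(\Cw\otimes\clif)^{\mathsf{SO}(V,B)} \;=\; (\Cw\otimes\clif)^{\mathsf{O}(V,B)} \;\oplus\; \mathcal{A},
\]
where $\mathcal{A}$ is the $(-1)$-eigenspace of $\sigma$. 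Combined with the first statement, it thus suffices to identify $\mathcal{A}$ with $\Gamma\cdot (\Cw\otimes\clif)^{\mathsf{O}(V,B)}$.

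For the final step I would use three properties of $\Gamma$: (i) $\Gamma^2 = 1$, as recorded after \eqref{e:Gamma}; (ii) for every $g\in \mathsf{O}(V,B)$, one has $g\cdot\Gamma = (\det g)\,\Gamma$, because the action of $g$ on $\gamma(V)\subset\clif$ agrees with its action on $V$ and the top exterior power of $V$ is the determinant character; (iii) $\Gamma$ is supercentral in $\clif$ and hence in $\Cw\otimes\clif$. Property (ii) shows $\Gamma \in (\Cw\otimes\clif)^{\mathsf{SO}(V,B)}$ and $\sigma\cdot\Gamma = -\Gamma$, while (i) and (iii) imply that left-multiplication by $\Gamma$ is an involutive linear automorphism of $(\Cw\otimes\clif)^{\mathsf{SO}(V,B)}$ swapping the two $\sigma$-eigenspaces. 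Hence $\mathcal{A} = \Gamma\cdot(\Cw\otimes\clif)^{\mathsf{O}(V,B)}$, and adjoining $\Gamma$ to the generators of the $\mathsf{O}$-invariants yields a generating set for the $\mathsf{SO}$-invariants.

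The hard part is in fact the classical FFT invoked at the outset; the remaining argument is elementary. The only subtlety worth double-checking is property (ii), i.e.\ the identity $g\cdot(e_1\dotsm e_d) = (\det g)\,e_1\dotsm e_d$ for $g\in \mathsf{O}(V,B)$, which reduces to expanding the product and using the antisymmetry of distinct Clifford generators.
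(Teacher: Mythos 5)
Your argument is correct, but it reaches the second statement by a genuinely different route than the paper. The paper quotes the First Fundamental Theorem twice: \cite[Theorem 4.19]{CW12} for the $\mathsf{O}(V,B)$-invariants of $S(\Cv)$ (which, transported to $\Cw\otimes\clif$, become the $\fosp(1|2)$ generators together with constants), and then \cite[Theorem 2.1 pg. 390]{Pr07} for the $\mathsf{SO}(V,B)$-invariants, observing that the additional determinant-type invariants can be rewritten as products of quadratic invariants with the pseudo-scalar $\Gamma$. You instead use only the $\mathsf{O}(V,B)$-statement and deduce the $\mathsf{SO}(V,B)$-statement from it by the index-two eigenspace decomposition under a fixed reflection $\sigma$, with left multiplication by $\Gamma$ exchanging the two $\sigma$-eigenspaces. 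This buys independence from the $\mathsf{SO}$-version of the FFT and avoids having to express the determinant tensors inside $\Cw\otimes\clif$; the paper's version is shorter given the references, and makes explicit where $\Gamma$ comes from representation-theoretically (it is the determinant tensor for the copy of $\bigwedge^d(V^*)$ inside $\clif$). Two small points to tighten. First, the identity $g\cdot(e_1\dotsm e_d)=(\det g)\,e_1\dotsm e_d$ does not follow from antisymmetry alone, since $e_k^2=1$ rather than $0$ and the expansion produces lower-degree terms; one should either invoke the orthogonality of $g$ to cancel them, or simply note that $e_1\dotsm e_d$ is the image of $y_1\wedge\dotsm\wedge y_d$ under the $\mathsf{O}(V,B)$-equivariant quantisation map $\bigwedge(V)\to\clif$ already in play in this section. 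Second, $\Gamma$ lies in the anti-centre of $\clif$, not its graded centre (for $d$ even it anticommutes with odd elements), a distinction the paper itself exploits in Proposition \ref{p:ClassCenter}; this is harmless for you, since your argument only needs that left multiplication by $\Gamma$ is an involutive linear map preserving $\mathsf{SO}(V,B)$-invariance, which follows from $\Gamma^2=1$ and the $\mathsf{SO}(V,B)$-invariance of $\Gamma$.
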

\begin{proof}
By \cite[Theorem 4.19]{CW12}, the invariants for $\mathsf{O}(V,B)$ in $S(\Cv)$ are generated by the quadratic invariants (the symmetric tensor corresponding to the bilinear form $B$, for a copy of $S^2(V^*)$ in $S^2(\Cv)$). 
In $\Cw\otimes\clif$, these can all be written in terms of the realisation of $\fosp(1|2)$ given in~\eqref{e:osp12}, and the constants. 
 
By \cite[Theorem 2.1 pg. 390]{Pr07}, the invariants for $\mathsf{SO}(V,B)$ are generated by the scalar products (the quadratic invariants) and the determinants (the alternating tensor corresponding to the map $\det$, for a copy of $\bigwedge^d (V^*)$ in $\bigwedge^d(\Cv)$). 
In $\Cw\otimes \clif$, the determinant tensors can all be written as products of quadratic invariants and the pseudo-scalar $\Gamma$ (which is the determinant tensor for the copy of $\bigwedge^d (V^*)$ inside $\bigwedge^d (V^*) \cong \clif$). 
\end{proof}







\begin{proposition}\label{p:ClassCenter}
When $c=0$, the graded center of $\tamaz$ in $\hc_{t,0}$ is the univariate polynomial ring in $\bbs$, where $\bbs$ is the Casimir $\Omega_{\fosp}$ of $\fosp(1|2)$ when $(-1)_V\notin W$, and $\bbs=\Cs(-1)_V$ with $\Cs$ the Scasimir of $\fosp(1|2)$ when $(-1)_V\in W$.
\end{proposition}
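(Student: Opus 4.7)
The plan is to establish three facts: (i) $\Omega_{\fosp}\in\grZ(\tamaz)$ always, and $\bbs=\scasi\,(-1)_V\in\grZ(\tamaz)$ whenever $(-1)_V\in W$; (ii) $\bbs$ satisfies no nontrivial polynomial identity; (iii) every graded central element of $\tamaz$ is a polynomial in $\bbs$. The main technical tools are the classical Howe duality $(\Pin(V,B),\fosp(1|2))$ acting on spinor-valued polynomials $\bbc[V]\otimes S$, together with the invariant-theoretic Lemma~\ref{lemma:Gamma}.

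For (i): the case of $\Omega_{\fosp}$ is immediate from the fact that it lies in the graded center of $U(\fosp(1|2))$; since $\tamaz$ by definition graded-commutes with $\fosp(1|2)$, iterating the graded Leibniz identity yields $\Omega_{\fosp}\in\grZ(\tamaz)$. For $\bbs$: the element $(-1)_V\in W$ acts as $-I$ on $V\oplus V^*$, so in $\hc_{t,0}$ it anti-commutes with each $x_i,y_i$ (hence with $F^\pm$) and commutes with $E^\pm,H$; these parities match the anti-central behaviour of $\scasi\in\anZ(U(\fosp(1|2)))$, and so $\bbs$ commutes with all of $\fosp(1|2)$, giving $\bbs\in\tamaz$. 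For graded centrality, verify commutation with the generators of $\tamaz$ from Section~\ref{s:TAMA}: $(-1)_V$ commutes with each $O_A$ at $c=0$ (as $M_{ij}$ is fixed by $-I$ and $(-1)_V$ lies in a tensor factor distinct from $\clif$) and with $\rho(\tilde W)$ (centrality of $(-1)_V$ in $W$), while $\scasi$ commutes with every element graded-commuting with $\fosp(1|2)$ by Leibniz applied to $\scasi=F^-F^+-F^+F^--\tfrac12$. A structurally cleaner route uses~\eqref{e:SGamma}, $\scasi\,\Gamma=i^{d(d-1)/2}t^{-1}O_{1\cdots d}$: when $(-1)_V\in W$, its preimage $\widetilde{(-1)_V}\in\tilde W$ is, up to sign, the pseudo-scalar $\Gamma\in\clif$, so $\bbs$ coincides up to a unit with $\rho(\widetilde{(-1)_V})\cdot O_{1\cdots d}$, a product of two elements already in $\tamaz$.

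For (ii) and (iii): use the Howe decomposition $\bbc[V]\otimes S=\bigoplus_\lambda V_\lambda\otimes L_\lambda$ as $(\Pin,\fosp(1|2))$-bimodule, with pairwise non-isomorphic irreducible $\fosp(1|2)$-modules $L_\lambda$. The Casimir $\Omega_{\fosp}$ acts on each $L_\lambda$ by a distinct scalar $\omega(\lambda)$, so it has infinitely many eigenvalues, giving (ii). For (iii), decompose a graded central $z\in\tamaz$ along the crossed-product structure $\tamaz\cong\fB\rtimes_\rho\tilde W/(\theta+1)$ (where $\fB$ is the graded $\fosp(1|2)$-centraliser in $\Cw\otimes\clif$) as $z=\sum_{[\tilde w]}a_{\tilde w}\rho(\tilde w)$. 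Commutation of $z$ with the subalgebra generated by the $O_{ij}$ (which at $c=0$ realises the full $\fso(V,B)$-action on $\Cw\otimes\clif$) forces the identity-component $a_1$ to lie in $(\Cw\otimes\clif)^{\mathsf{SO}(V,B)}$; by Lemma~\ref{lemma:Gamma} this space is generated by $\fosp(1|2)$ and $\Gamma$, and after imposing graded $\fosp(1|2)$-centrality one is left with $a_1\in\bbc[\Omega_{\fosp}]+\bbc[\Omega_{\fosp}]\cdot\scasi\,\Gamma$. A parallel analysis for components with $w\neq 1$ shows these survive the centrality requirements only when $w=(-1)_V\in W$, in which case the $\scasi\,\Gamma$-term combines with $\rho(\widetilde{(-1)_V})$ to produce $\bbs$. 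Using $\bbs^2=\Omega_{\fosp}+\tfrac14$ from Proposition~\ref{p:scasisquare} then identifies a single generator in each of the two cases.

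The main obstacle is the careful $w\neq 1$ analysis in (iii): ruling out graded central contributions from non-central elements of $W$, and showing that only $w=(-1)_V$ yields an essentially new generator beyond $\Omega_{\fosp}$. This ultimately reduces to a representation-theoretic statement about the embedding $\rho(\bbc\tilde W)\cong\bbc\tilde W_-$ of Proposition~\ref{p:rhoiso}, combined with the explicit $W$-equivariant structure of $\fB$ encoded by the relations of Section~\ref{s:rels} at $c=0$.
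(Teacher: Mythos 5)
Your argument is essentially the paper's own: at $c=0$ the $O_{ij}$ realise $\fso(d)$, exponentiation plus Lemma~\ref{lemma:Gamma} identify the relevant invariants as generated by $\fosp(1|2)$ and $\Gamma$, the anti-centrality (rather than graded centrality) of $\Gamma$ eliminates it from $\grZ(\tamaz)$ except in the combination $\scasi\,\Gamma\,\wlong=\scasi\,(-1)_V$ available precisely when $(-1)_V\in W$, and the analysis of the group-algebra components reduces to $W\cap\{1,(-1)_V\}$. The only substantive addition is your explicit transcendence check for $\bbs$ via the Howe decomposition of $\bbc[V]\otimes S$, a point the paper leaves implicit.
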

\begin{proof}
For $c=0$, the linear span $\fo := \langle O_{ij} \mid 1\leq i,j\leq d \rangle$ of the 2-index symmetries $O_{ij}$ forms a realisation of $\fso(d)$ inside $(\Cw\rtimes W)\otimes\clif$, as was noted in Remark \ref{rem:classicalsod}. 
For the subalgebra $\Cw\otimes\clif \subset (\Cw\rtimes W)\otimes\clif$, by exponentiation, we have $\Cent_{\Cw\otimes\clif}(\fo) = (\Cw\otimes \clif)^{\mathsf{SO}(V,B)}$ which is generated by $ \fosp(1|2)$ and $ \Gamma $, as given by Lemma~\ref{lemma:Gamma}.
From the action of $W$ on $\fo$, it follows that $\Cent_{(\Cw\rtimes W)\otimes\clif}(\fo)$ is generated by $ \fosp(1|2)$, $\Gamma $ and $W \cap \{1,(-1)_V\} $. 

Now, $ \fosp(1|2)$, and $W \cap \{1,(-1)_V\} $ supercommute with the elements $O_{ijk} \in \tamaz$ and $\rho(\tilde W)$. 
However, $\Gamma$ is in the anti-center, and not the graded center of the Weyl-Clifford algebra. Hence, $\Gamma$ does not supercommute with elements that have odd $\bbz_2$-grading, such as  $O_{ijk} \in \tamaz$. 
Since $\Gamma^2 = 1$, it follows that $\Cent_{(\Cw\rtimes W)\otimes\clif}(\tamaz )$ is generated by $ \fosp(1|2)$ and $W \cap \{1,(-1)_V\} $.

The claim now follows from intersecting $\Cent_{(\Cw\rtimes W)\otimes\clif}(\tamaz )$ with $\tamaz$: if $w_0 = (-1)_V$, then $\scasi (-1)_V = \scasi \,\Gamma\,\wlong \in \tamaz$, by~\eqref{e:SGamma}. \end{proof}
\color{black}

In order to determine the graded centre of the $O_{t,c}$ when $c\neq 0$, it is convenient to introduce a formal central parameter $\bq$ and define the Weyl and the rational Cherednik algebras as algebras over the polynomial ring $\bbc[\bq,\bq^{-1}]$. We will proceed in a similar fashion as in \cite{CD20b}. To that end, we define the generic Weyl algebra $\Cw_\bq$ as
the unital associative algebra over $\bbc$ generated by $\bq,\bq^{-1}$, $x\in V^*$ and $y\in V$ subject to the relations $[\bq^n,x] = 0 = [\bq^n,y] = [x,x'] = [y,y']$ for all $n\in \bbz$ and
\begin{equation}\label{eq:gradedWeyl}
[y,x] = \bq^2 \lpi y,x \rpi
\end{equation}
for all $x,x' \in V^*$ and $y,y' \in V$. In the next proposition, we will use the multiindex notation $x^\alpha = x_1^{a_1}\cdots x_d^{a_d}$ for any $\alpha = (a_1,\cdots,a_d)\in\bbn^d$.

\begin{proposition}\label{p:genWeyl}
The set $\{\bq^nx^\alpha y^\beta\mid n\in \bbz, \alpha,\beta\in\bbn^d\}$ forms a $\bbc$-linear basis of $\Cw_\bq$. Furthermore, if we define, for each $m\in \bbz$
\[
\Cw_\bq^m := \textup{span}_\bbc\{\bq^nx^\alpha y^\beta\mid n+|\alpha|+|\beta| = m\},
\]
then the generic Weyl algebra $\Cw_\bq = \oplus_{m\in\bbz} \Cw_\bq^m$ is a $\bbz$-graded $\bbc$-algebra.
\end{proposition}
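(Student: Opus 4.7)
The plan is to split the proof into two parts: the PBW-type basis claim and the compatibility of the natural $\bbz$-grading.

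For the spanning assertion, I would proceed by induction on word length in the generators. Since $\bq^{\pm 1}$ is central, any power of $\bq$ may be moved past $x$'s and $y$'s without correction. The relations $[x,x']=0$ and $[y,y']=0$ let the $x$-letters and $y$-letters be ordered separately. The straightening relation $[y,x] = \bq^2 \lpi y,x\rpi$ then lets any out-of-order $yx$ be rewritten as $xy + \bq^2 \lpi y,x\rpi$, producing a shorter correction word in $V \oplus V^*$. Iterating, every element of $\Cw_\bq$ is expressible as a $\bbc$-linear combination of the ordered monomials $\bq^n x^\alpha y^\beta$.

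For linear independence the cleanest argument is to construct an isomorphism
\[
\phi \colon \Cw_\bq \longrightarrow \bbc[\bq,\bq^{-1}] \otimes_\bbc \Cw,
\]
where $\Cw = \Cw(V)$ is the classical (undeformed) Weyl algebra with $[y,x] = \lpi y,x\rpi$. Define $\phi$ on generators by $\phi(\bq) = \bq\otimes 1$, $\phi(x) = 1\otimes x$ for $x\in V^*$, and $\phi(y) = \bq^2 \otimes y$ for $y\in V$, and verify it respects each defining relation; for instance,
\[
\phi([y,x]) \;=\; [\bq^2\otimes y,\, 1\otimes x] \;=\; \bq^2 \otimes \lpi y,x\rpi \;=\; \phi(\bq^2 \lpi y,x\rpi).
\]
An inverse is determined by $\bq\otimes 1 \mapsto \bq$, $1\otimes x \mapsto x$, $1\otimes y \mapsto \bq^{-2} y$, which likewise respects relations. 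Since the classical PBW theorem gives $\{x^\alpha y^\beta\}$ as a $\bbc$-basis of $\Cw$, and $\{\bq^n\}_{n\in\bbz}$ is a $\bbc$-basis of $\bbc[\bq,\bq^{-1}]$, the tensor product has $\bbc$-basis $\{\bq^n \otimes x^\alpha y^\beta\}$. Transporting this basis through $\phi^{-1}$ yields the claimed basis of $\Cw_\bq$.

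For the grading claim, assign $\deg \bq = 1$ (hence $\deg \bq^{-1} = -1$) and $\deg v = 1$ for every $v \in V \cup V^*$. Each defining relation is then homogeneous: the centrality of $\bq^{\pm 1}$ and the commutativity among the $x$'s and among the $y$'s are trivially graded, while $[y,x] = \bq^2 \lpi y,x\rpi$ equates two expressions of degree $2$ because $\lpi y,x\rpi \in \bbc$ has degree $0$. Therefore the $\bbz$-grading on the free algebra on the generators descends to $\Cw_\bq$, and the PBW basis found above is homogeneous, giving the decomposition $\Cw_\bq = \bigoplus_{m\in\bbz}\Cw_\bq^m$. The main obstacle is linear independence; it is neatly sidestepped by the isomorphism $\phi$, which reduces the claim to the classical PBW theorem for $\Cw$.
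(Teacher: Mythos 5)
Your proof is correct, and it differs from the paper's in the one step that actually requires an argument. The paper disposes of the basis claim in a single sentence by appealing to the ``well-known linear isomorphism between the Weyl algebra and the symmetric algebra on $V\oplus V^*$'' --- i.e.\ it takes the PBW property of $\Cw_\bq$ as an instance of the standard symbol-map argument --- and then, exactly as you do, observes that the defining relation~\eqref{eq:gradedWeyl} is homogeneous for the assignment $\deg\bq=\deg\bq^{-1}{}^{-1}=\deg(V\oplus V^*)=1$, so the grading on the free algebra descends. Your treatment of linear independence is genuinely different: the rescaling $y\mapsto \bq^2\otimes y$ untwists the deformation and produces an explicit algebra isomorphism $\Cw_\bq\cong\bbc[\bq,\bq^{-1}]\otimes_\bbc\Cw$, reducing everything to the classical PBW theorem. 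This buys you a self-contained argument (no appeal to a filtered-deformation or diamond-lemma fact about $\Cw_\bq$ itself) plus the stronger structural statement that the generic Weyl algebra is just the classical one base-changed to $\bbc[\bq,\bq^{-1}]$; the only point worth making explicit is that $\phi^{-1}(\bq^n\otimes x^\alpha y^\beta)=\bq^{\,n-2|\beta|}x^\alpha y^\beta$, so the transported basis coincides with the stated one only after the (bijective) reindexing $n\mapsto n-2|\beta|$ --- which you implicitly use and which is harmless. The paper's route is shorter but leans on a cited fact; yours is longer but elementary.
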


\begin{proof}
The claim about the linear basis is immediate from the well-known linear isomorphism between the Weyl algebra and the symmetric algebra on $V\oplus V^*$. The description of the grading amounts to declaring $\bq, S^1(V\oplus V^*)$ to be of degree $1$ and $\bq^{-1}$ to be of degree $-1$. The result follows by observing that the defining relation (\ref{eq:gradedWeyl}) is a graded relation in $\Cw_\bq$.
\end{proof}

Similarly, we define the generic rational Cherednik algebra $\rca_{\bq,c}(V,W)$ by introducing the central parameter $\bq$ and requiring that the defining relation satisfy
\begin{equation}\label{eq:genRC}
 [y,x] = \bq^2\langle y, x\rangle - \sum_{\alpha>0} \langle y,	\alpha\rangle\langle  \alpha^{\vee},  x \rangle  c(\alpha) s_{\alpha} 
\end{equation}
for all $x\in V^*,y\in V$. Note that the non-generic Cherednik algebra of Definition \ref{d:RCA} can be obtained from the generic Cherednik algebra by sending $\textbf{q}^2$ to $t$.  By means of the well-known PBW linear basis of $\rca_{t,c}(V,W)$, it is straightforward to check that monomials of the type $\bq^nx^\alpha y^\beta w$, with $n\in \bbz, \alpha,\beta$ multi-indices and $w\in W$ form a linear basis of the generic rational Cherednik algebra. This $\bbc$-linear basis is independent of the choice of the parameter function $c$, and note that when $c=0$, we get $\rca_{\bq,0}(V,W) = \Cw_\bq \rtimes W$. Now define a filtration $\Cf^{(m)}$ on $\rca_{\bq,c}(V,W)$ in the following way. We declare $\bq,S^1(V+V^*)$ to be of degree $1$, $w \in W$ to be of degree $0$ and $\bq^{-1}$ to be of degree $-1$. Let
\begin{equation}\label{eq:filtration}
\Cf^{(m)} = \textup{span}_\bbc\left\{\bq^n x^\alpha y^\beta w  \mid n+|\alpha|+|\beta| \leq m\right\}.
\end{equation}

\begin{proposition}\label{p:bracket}
Given any $\xi\in\Cf^{(m)},\eta\in \Cf^{(n)}$ we have that
\[
[\xi,\eta] \equiv [\xi,\eta]_0
\]
modulo $\Cf^{(m+n-1)}$, where $[\xi,\eta]_0$ denotes the commutator product in the algebra $\rca_{\bq,0}(V,W) = \Cw_\bq \rtimes W$ at $c=0$.
\end{proposition}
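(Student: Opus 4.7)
The plan is to exhibit $\Cf^{(\bullet)}$ as an algebra filtration on $\rca_{\bq,c}(V,W)$ and then to show that, modulo $\Cf^{(m+n-1)}$, the product in $\rca_{\bq,c}(V,W)$ already coincides with the product in $\rca_{\bq,0}(V,W)=\Cw_\bq\rtimes W$. Throughout, I identify both algebras as $\bbc$-vector spaces via the common PBW basis $\{\bq^n x^\alpha y^\beta w\}$ and write $\xi\cdot_0 \eta$ for the product computed in $\rca_{\bq,0}$. Once I have $\xi\eta \equiv \xi\cdot_0\eta \pmod{\Cf^{(m+n-1)}}$ for $\xi\in\Cf^{(m)}$, $\eta\in\Cf^{(n)}$, the statement follows by antisymmetrising.

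First, I would verify $\Cf^{(m)}\cdot \Cf^{(n)}\subset \Cf^{(m+n)}$. The factors $\bq^{\pm 1}$ are central, and each ``cross'' relation $wx=(w\cdot x)w$, $wy=(w\cdot y)w$ clearly preserves the filtration degree. The only relation that could conceivably be problematic is the Cherednik relation (\ref{eq:genRC}): there $yx$ has degree $2$, the leading term $\bq^2\langle y,x\rangle$ has degree $2$, and each correction $c(\alpha)s_\alpha$ has degree $0$, all of which lie in $\Cf^{(2)}$. Iteratively reducing any product to PBW form using only these relations therefore keeps the total filtration degree bounded by the sum of the input degrees, which gives multiplicativity of the filtration.

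Next, for the comparison of products, by bilinearity I may take $\xi = \bq^{n_1}x^{\alpha_1}y^{\beta_1}w_1$ and $\eta = \bq^{n_2}x^{\alpha_2}y^{\beta_2}w_2$ to be PBW monomials, and proceed by induction on the number of elementary moves needed to bring $\xi\eta$ into normal form. Moving the central $\bq^{n_2}$ across contributes no discrepancy, and moving $w_1$ past letters of $V\oplus V^*$ contributes none either since the rewriting $wx=(w\cdot x)w$ is identical at $c$ and at $0$. The only divergence appears each time a $y\in V$ must be reordered past an $x\in V^*$: at $c$ this produces $xy + \bq^2\langle y,x\rangle - \sum_{\alpha>0}c(\alpha)\langle y,\alpha\rangle\langle\alpha^\vee,x\rangle s_\alpha$, whereas at $c=0$ the last sum is absent. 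For a single swap, the discrepancy replaces a degree-$2$ factor by a degree-$0$ factor sitting inside a surrounding monomial in $\Cf^{(m+n)}$, so it lives in $\Cf^{(m+n-2)}$. By the multiplicativity from Step 1, the subsequent reductions of such correction terms remain in $\Cf^{(m+n-2)}\subset \Cf^{(m+n-1)}$, and the total discrepancy $\xi\eta-\xi\cdot_0\eta$ accumulates in $\Cf^{(m+n-1)}$.

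Antisymmetrising then yields $[\xi,\eta]-[\xi,\eta]_0 = (\xi\eta-\xi\cdot_0\eta)-(\eta\xi-\eta\cdot_0\xi)\in\Cf^{(m+n-1)}$, as required. The main technical point is the bookkeeping in the inductive step: each correction term produced mid-reduction must itself be reduced to PBW normal form, and it is precisely the multiplicativity established in Step 1 that guarantees this does not push the filtration degree back above $m+n-1$. Once that invariant is maintained, no further obstruction arises.
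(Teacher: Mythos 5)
Your argument is correct, but it proceeds along a genuinely different route from the paper's. You work at the level of PBW straightening: you first check that $\Cf^{(\bullet)}$ is an algebra filtration, then compare the normal-form reductions of a product of PBW monomials at parameter $c$ and at $c=0$, observing that the only divergence occurs at each $y$--$x$ swap, where the correction $\sum_{\alpha>0}c(\alpha)\langle y,\alpha\rangle\langle\alpha^\vee,x\rangle s_\alpha$ has filtration degree $0$ in place of the degree-$2$ factor $yx$, so every discrepancy term lands in $\Cf^{(m+n-2)}$. The paper instead reduces, via the Leibniz identity $[\bq^n\xi w,\eta]=\bq^n(\xi[w,\eta]+[\xi,\eta]w)$ and $[w,\eta]=[w,\eta]_0$, to the case $\xi\in S^m(V+V^*)$, quotes the closed Dunkl-type formulas $[y,p]=\bq^2\partial_y(p)-\sum_{\alpha>0}c(\alpha)\langle\alpha,y\rangle\frac{p-s_\alpha(p)}{\alpha}s_\alpha$ and its dual from the literature to settle the base case $\xi\in S^1(V+V^*)$ (the correction visibly has polynomial degree one less than $p$), and then inducts on the monomial degree of $\xi$ using $[\nu\xi,\eta]=\nu[\xi,\eta]+\xi[\nu,\eta]$. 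Your approach is more self-contained (no appeal to the commutator formulas of Griffeth et al.), makes the multiplicativity of the filtration explicit --- a fact the paper uses tacitly for Corollary \ref{c:genRCA} --- and in fact yields the slightly stronger conclusion that the \emph{products} themselves agree modulo $\Cf^{(m+n-2)}$; the cost is that the rewriting bookkeeping (termination of the reduction and the fate of correction terms generated mid-reduction) must be tracked carefully, which is exactly where your Step 1 invariant is doing the work, whereas the paper's induction is essentially a two-line derivation once the known formulas are invoked.
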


\begin{proof}
It suffices to prove the result when $\xi$ is a monomial in $S^m(V+V^*)$, since $[\bq^n\xi w,\eta] = \bq^n(\xi[w,\eta] + [\xi,\eta]w)$ and $[w,\eta] = [w,\eta]_0$, for all $w \in W$. Let $p\in S(V^*)$ and $q\in S(V)$. For any $y\in V, x\in V^*$, it is known that (see \cite{Gr10} and \cite[Propositions 2.5, 2.6]{CD20a})
\begin{align*}
    [y,p] &= \bq^2\partial_y(p) - \sum_{\alpha >0} c(\alpha) \langle \alpha,y \rangle \frac{p-s_\alpha(p)}{\alpha}s_\alpha\\
    [q,x] &= \bq^2\partial_x(q) - \sum_{\alpha >0} c(\alpha) \langle x,\alpha^\vee \rangle \frac{q-s_\alpha(q)}{\alpha^\vee}s_\alpha.
\end{align*}
Hence, the claim holds when $\xi\in S^1(V+V^*)$. Now, given any $\nu \in S^{1}(V+V^*)$ and $\xi\in S^m(V+V^*)$, from
$[\nu\xi,\eta] = \nu[\xi,\eta] + \xi[\nu,\eta]$, the result is proved by induction on the monomial degree.
\end{proof}

\begin{corollary}\label{c:genRCA}
Let $\mathsf{Gr}(\rca_{\bq,c}(V,W))$ be the associated graded algebra with respect to the filtration defined in (\ref{eq:filtration}). Then, as $\bbz$-graded $\bbc$-algebras, we have $\mathsf{Gr}(\rca_{\bq,c}(V,W))\cong \rca_{\bq,0}(V,W)=\Cw_\bq\rtimes W$.
\end{corollary}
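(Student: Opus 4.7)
The plan is to construct a $\bbz$-graded $\bbc$-algebra isomorphism $\bar\Phi\colon \rca_{\bq,0}(V,W)\to \mathsf{Gr}(\rca_{\bq,c}(V,W))$ using the common PBW basis $\{\bq^n x^\alpha y^\beta w\}$ that, as noted just before Proposition \ref{p:bracket}, is shared by both algebras and is independent of $c$. First, I would define $\bar\Phi$ by sending a basis monomial $\bq^n x^\alpha y^\beta w$ of total degree $m = n+|\alpha|+|\beta|$ to the class of the corresponding monomial in $\Cf^{(m)}/\Cf^{(m-1)}$. Since the grading on the source is the one from Proposition \ref{p:genWeyl}, extended by placing $W$ in degree zero, this map is tautologically a $\bbc$-linear bijection of $\bbz$-graded vector spaces.

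The second step is to verify that $\bar\Phi$ respects multiplication. Both algebras are generated by $\bq^{\pm 1}, V, V^*, W$, so it is enough to check that every defining relation of $\rca_{\bq,c}(V,W)$ reduces, modulo the appropriate lower filtration piece, to the corresponding relation of $\rca_{\bq,0}(V,W)=\Cw_\bq\rtimes W$. Centrality of $\bq^{\pm 1}$, the commutativity relations within $V$ and $V^*$, and the conjugation action of $W$ are literally identical in both algebras. The only nontrivial case is the $[y,x]$ relation~(\ref{eq:genRC}): the corrective term $\sum_{\alpha>0}\langle y,\alpha\rangle\langle\alpha^\vee,x\rangle c(\alpha)s_\alpha$ lies in $\Cf^{(0)}$, while $[y,x]$ as a product lies in $\Cf^{(2)}$, so the corrective term dies in $\Cf^{(2)}/\Cf^{(1)}$ and the relation reduces exactly to $[y,x] = \bq^2\langle y,x\rangle$. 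This is precisely the content of Proposition \ref{p:bracket} at the level of generators.

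The remaining task is to promote these leading-order relations to an honest algebra homomorphism on all of $\rca_{\bq,0}(V,W)$. I would do this by induction on monomial length: repeatedly applying PBW straightening inside $\rca_{\bq,c}(V,W)$ produces the PBW-ordered form of a product plus correction terms coming from~(\ref{eq:genRC}), each of which strictly lowers the filtration degree (by exactly $2$), so they are killed when passing to $\mathsf{Gr}$. The only real bookkeeping, and the mildest of obstacles, is to confirm that iterated straightening never allows correction terms to conspire back up to top filtration degree, which is immediate since each individual correction drops the filtration and subsequent commutations can only drop it further. Combined with Step~1, this shows that $\bar\Phi$ is a bijective graded algebra homomorphism, hence the desired isomorphism of $\bbz$-graded $\bbc$-algebras.
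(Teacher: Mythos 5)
Your proposal is correct and follows essentially the same route as the paper: the paper states this corollary without a separate proof precisely because it is the immediate consequence of the common PBW basis (which makes the monomial classes a basis of each graded piece $\Cf^{(m)}/\Cf^{(m-1)}$) together with Proposition \ref{p:bracket}, which is exactly your observation that the only $c$-dependent relation~(\ref{eq:genRC}) contributes a correction term of filtration degree $0$ that vanishes in $\Cf^{(2)}/\Cf^{(1)}$. Your third paragraph merely spells out the standard bookkeeping (universal property on generators plus matching of PBW bases) that the paper leaves implicit.
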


Now let $\hc_{\bq,c} = \rca_{\bq,c}(V,W)\otimes \clif$. We define a filtration $\Cg^{(m)}$ on $\hc_{\bq,c}$ similar to the filtration $\Cf^{(m)}$ of (\ref{eq:filtration}), but requiring that the Clifford elements are of degree $0$.

\begin{corollary}\label{c:GrgenRCA}
When $c=0$, the algebra $\hc_{\bq,0} = (\Cw_\bq\rtimes W)\otimes \clif$ is a $\bbz$-graded $\bbc$-algebra. For any $c$, with respect to the filtration $\Cg^{(m)}$, the associated graded object $\mathsf{Gr}(\hc_{\bq,c})$ is isomorphic to $\hc_{\bq,0}$ as $\bbz$-graded $\bbc$-algebras.
\end{corollary}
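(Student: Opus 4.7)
The plan is to reduce this corollary to the preceding Corollary \ref{c:genRCA} by observing that the Clifford factor contributes only in degree zero, so the grading/filtration structure on $\hc_{\bq,c}$ is determined entirely by the Cherednik factor and the tensor product commutes with passing to the associated graded.

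First, for the graded statement at $c=0$, I would note that Proposition \ref{p:genWeyl} already gives $\Cw_\bq = \bigoplus_{m\in\bbz} \Cw_\bq^m$, and this extends to $\rca_{\bq,0}(V,W) = \Cw_\bq\rtimes W$ by declaring $W$ to sit in degree $0$. This is well-defined because the $W$-action on $\Cw_\bq$ preserves each homogeneous component $\Cw_\bq^m$ (the $W$-action rescales the coordinates $x,y$ by linear substitutions, preserving total monomial degree). Then, since $\clif$ is placed in degree $0$ in the definition of $\Cg^{(m)}$, the tensor product inherits a $\bbz$-grading
\[
\hc_{\bq,0}^{(m)} = (\Cw_\bq\rtimes W)^m \otimes \clif,
\]
and the defining Clifford relation \eqref{e:clifrel} is homogeneous of degree $0$, so multiplication respects the grading.

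For the filtered statement, my key observation is that
\[
\Cg^{(m)} = \Cf^{(m)}\otimes \clif
\]
as $\bbc$-vector spaces, since Clifford generators contribute $0$ to the filtration degree. Because $\clif$ is a finite dimensional $\bbc$-algebra placed entirely in filtration degree $0$, taking the associated graded of $\hc_{\bq,c}$ splits as $\mathsf{Gr}(\hc_{\bq,c}) \cong \mathsf{Gr}(\rca_{\bq,c}(V,W))\otimes \clif$ as $\bbz$-graded $\bbc$-algebras. Invoking Corollary \ref{c:genRCA} then gives $\mathsf{Gr}(\hc_{\bq,c})\cong (\Cw_\bq\rtimes W)\otimes \clif = \hc_{\bq,0}$, as required.

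The only point requiring mild care is checking that multiplication in $\hc_{\bq,c}$ does not produce unwanted filtration jumps when one mixes Clifford generators with Cherednik generators. This is immediate however, since $\clif$ commutes with $\bq, \bq^{-1}, V, V^*$ and $W$ in the tensor product $\hc_{\bq,c}=\rca_{\bq,c}(V,W)\otimes\clif$, so for homogeneous elements $\xi \in \Cf^{(m)}$, $\eta \in \Cf^{(n)}$, and $a,b \in \clif$, one has $(\xi\otimes a)(\eta\otimes b) = (\xi\eta)\otimes(ab) \in \Cf^{(m+n)}\otimes \clif = \Cg^{(m+n)}$, and the leading term in the associated graded is governed by $[\xi,\eta]_0$ via Proposition \ref{p:bracket}. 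Hence no new relations appear upon tensoring with $\clif$, and the identification of graded algebras is complete.
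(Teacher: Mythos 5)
Your proof is correct and follows essentially the same route as the paper: both arguments observe that the Clifford factor commutes with the Cherednik factor and sits in filtration degree zero, so the filtration jumps are governed entirely by the commutator in $\rca_{\bq,c}(V,W)$, and both then conclude via Proposition \ref{p:genWeyl} and Corollary \ref{c:genRCA}. The only cosmetic difference is that you invoke the general identity $\mathsf{Gr}(A\otimes\clif)\cong\mathsf{Gr}(A)\otimes\clif$ for a degree-zero tensor factor, whereas the paper verifies the same point by writing out the product of two basis monomials explicitly.
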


\begin{proof}
The set \[\{\bq^nx^\alpha y^\beta w \otimes e_A\mid n\in \bbz,w \in W, A \subset \{1,\ldots,d\},\alpha,\beta\textup{ multiindices}\}\] is a $\bbc$-linear basis of $\hc_{\bq,c}$, for all $c$. Furthermore, the product of two such monomials $\mu_1=\bq^{n_1}x^{\alpha_1} y^{\beta_1} w_1 \otimes e_{A_1}$ and $\mu_2=\bq^{n_2}x^{\alpha_2} y^{\beta_2} w_2 \otimes e_{A_2}$ can be written as
\[
\mu_1\mu_2 = \bq^{n_1+n_2}x^{\alpha_1}(w_1(x^{\alpha_2})y^{\beta_1}+[y^{\beta_1},w_1(x^{\alpha_2})])y^{\beta_1}w_1(y^{\beta_2}) w_1w_2 \otimes e_{A_1}e_{A_2}.
\]
The filtration degree of such expression depends on the commutator in $\rca_{\bq,c}(V,W)$, so our claims follow from Proposition \ref{p:genWeyl} and Corollary \ref{c:genRCA}.
\end{proof}

Next, let $\fg\subset \rca_{t,c}(V,W)\otimes\clif$ denote the realisation of the $\mathfrak{osp}(1|2)$ Lie superalgebra of (\ref{e:osp12}) and let $\fg_{\overline{0}}$ and $\fg_{\overline{1}}$ denote the even and odd parts of $\fg$. Denote by $\ama_{t,c} = \Cent_{\rca_{t,c}}(\fg_{\bar{0}})$.

Slightly abusing the notation, we still denote by $\fg$ the $5$-dimensional vector subspace of $\hc_{\bq,c}$ spanned by elements in (\ref{e:osp12}) defined by substituting $\bq=\sqrt{t}$. Note that when $c=0$, $\fg$ is a Lie superalgebra concentrated in degree $0$ inside $\hc_{\bq,c}$ and we still denote by $\fg_{\overline{0}}$ and $\fg_{\overline{1}}$ to the even and odd parts (but we remark that the $\bbz_2$-grading of $\fg$ is not compatible with the $\bbz$-grading of $\hc_{\bq,c})$. 

Let $\ama_{\bq,c} = \Cent_{\rca_{\bq,c}}(\fg_{\overline{0}})$ and $O_{\bq,c} = \Cent_{\hc_{\bq,c}}(\fg)$. It is straight forward to check that $\Cent_{\hc_{\bq,c}}(\fg_{\bar0}) = \ama_{\bq,c}\otimes \clif$ and that the following assertions hold true (the equivalent proofs in \cite{Os21} generalise to include $\textbf{q}$ in a straight forward way):
\begin{itemize}
    \item $\Cent_{\hc_{\bq,c}}(\fg) = P(\ama_{\bq,c}\otimes \clif)$,  where $P = \textup{Id} - \ad(F^-)\ad(F^+)$ is the projection operator $P:\Cent_{\hc_{\bq,c}}(\fg_{\overline{0}})\to \Cent_{\hc_{\bq,c}}(\fg)$,
    \item $\rho(\tilde W)$ and the elements $O_A = -\tfrac{\bq^2}{2}P(e_A)$, with $A\subseteq \{1,\ldots, d\}$, generate $O_{\bq,c}$ as an associative algebra over $\bbc[\bq,\bq^{-1}]$.
\end{itemize}
    
\begin{proposition}\label{p:tamaiso}
When $O_{\bq,c}$ is equipped with the filtration induced by the filtration $\Cg^{(m)}$ on $\hc_{\bq,c}$ as in Corollary \ref{c:GrgenRCA}, we have $\mathsf{Gr}(O_{\bq,c})\cong O_{\bq,0}$ is an isomorphism of $\bbz$-graded $\bbc$-algebras.
\end{proposition}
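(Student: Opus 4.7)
The plan is to equip $O_{\bq,c}$ with the induced filtration $\Cg^{(m)}\cap O_{\bq,c}$, so that the inclusion $\iota\colon O_{\bq,c}\hookrightarrow \hc_{\bq,c}$ is filtration-preserving and induces an injective graded algebra map $\mathsf{Gr}(\iota)\colon \mathsf{Gr}(O_{\bq,c})\hookrightarrow \mathsf{Gr}(\hc_{\bq,c})$, which by Corollary \ref{c:GrgenRCA} is canonically identified with $\hc_{\bq,0}$. It then suffices to show that the image of $\mathsf{Gr}(\iota)$ is exactly $O_{\bq,0}$.

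To prove that the image lies inside $O_{\bq,0}$, I would first verify that each generator of $\fg\subset \hc_{\bq,c}$ from (\ref{e:osp12}), with $\bq=\sqrt{t}$, belongs to $\Cg^{(0)}$ with top-degree part equal to the corresponding $c=0$ element. This is immediate for $E^\pm$ and $F^\pm$, where the negative powers of $\bq$ exactly offset the polynomial degree; for $H$ the leading piece is the Euler-type expression $\bq^{-2}\sum_p x_py_p + d/2$, while the remaining $-\bq^{-2}\Omega_c$ sits in $\Cg^{(-2)}$ and vanishes in the associated graded. Taking leading symbols of the graded-commutation relations defining $O_{\bq,c}$ then yields graded-commutation of $\mathsf{Gr}(a)$ with the $c=0$ realisation of $\fosp(1|2)$ inside $\hc_{\bq,0}$, which is by definition the condition for membership in $O_{\bq,0}$.

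For surjectivity onto $O_{\bq,0}$, I would exploit the explicit generating set of $O_{\bq,c}$ recalled just before the statement of the proposition: the image $\rho(\tilde W)$ together with the elements $O_A=-\tfrac{\bq^2}{2}P(e_A)$. Inspecting the explicit formula (\ref{e:OA}) with $t=\bq^2$, one sees that $O_A$ lies in $\Cg^{(2)}$ with top-degree symbol
\[
\Bigl(\tfrac{|A|-1}{2}\bq^2 + \sum_{\{a,b\}\subset A} M_{ab}\,e_{ab}\Bigr)e_A,
\]
while the $c$-dependent term $\sum_{a\in A}\oO_a e_a e_A$, which belongs to $\bbc W\otimes\clif$ in polynomial degree $0$, drops out in the associated graded. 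This leading piece is exactly the formula for $O_A$ in the undeformed algebra $O_{\bq,0}$; similarly, elements of $\rho(\tilde W)$ already sit in $\Cg^{(0)}$ and coincide with their $c=0$ counterparts. Since the same family generates $O_{\bq,0}$ as a $\bbc[\bq,\bq^{-1}]$-algebra, every element of $O_{\bq,0}$ is reached by $\mathsf{Gr}(\iota)$.

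The main obstacle is confirming that the leading symbol of a graded commutator of two filtered elements computes the corresponding graded commutator in the associated graded (where it becomes the commutator in $\hc_{\bq,0}$). This is precisely the content of Proposition \ref{p:bracket}, extended to accommodate the Clifford factor — which is harmless, since Clifford generators sit in filtration degree $0$ and commute among themselves according to (\ref{e:clifrel}), independently of $c$. Once that compatibility is secured, injectivity of $\mathsf{Gr}(\iota)$ is automatic from the definition of the induced filtration, and the proposition follows from the two containments established above.
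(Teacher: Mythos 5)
Your proof is correct and follows essentially the same route as the paper: identify $\mathsf{Gr}(\hc_{\bq,c})$ with $\hc_{\bq,0}$ via Corollary \ref{c:GrgenRCA} and observe that the leading symbols of the generators $\rho(\tilde W)$ and $O_A$ are exactly the $c=0$ generators, the term $\sum_{a\in A}\oO_a e_a e_A$ dropping modulo $\Cg^{(1)}$. You are in fact a little more careful than the paper, which passes directly from this generator computation to ``the two subalgebras coincide''; your additional step --- that the symbol of an arbitrary element of $O_{\bq,c}$ still supercommutes with the $c=0$ realisation of $\fosp(1|2)$, giving the inclusion $\mathsf{Gr}(O_{\bq,c})\subseteq O_{\bq,0}$ --- is precisely what is needed to justify that direction, since in general the associated graded of a subalgebra can be strictly larger than the subalgebra generated by the symbols of its generators.
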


\begin{proof}
We note that for any $A\subset\{1,\ldots,d\}$ we have
\begin{align*}
   -\frac{\bq^2}{2}P(e_{A}) &= O_{A}\\
   &=\left( \sum_{\{a,b\} \subset A}M_{ab}e_{ab}  + \sum_{a \in A} \oO_ae_a + \frac{\bq^2(|A|-1)}{2} \right) e_A\\
   &\equiv \left( \sum_{\{a,b\} \subset A}M_{ab}e_{ab}  +  \frac{\bq^2(|A|-1)}{2} \right) e_A
\end{align*}
modulo $\Cg^{(1)}$. The algebras $O_{\bq,c}$ and $O_{\bq,0}$ are in $\hc_{\bq,c}$ and $\hc_{\bq,0}$, respectively, and from Corollary \ref{c:GrgenRCA}, we have $\mathsf{Gr}(\hc_{\bq,c}) \cong \hc_{\bq,0}$. Therefore, $\mathsf{Gr}(O_{\bq,c})$ is a subalgebra of $\hc_{\bq,0}$. The equation above shows that the subalgebras $\mathsf{Gr}(O_{\bq,c})$ and $O_{\bq,0}$ of $\hc_{\bq,0}$ coincide, from which we conclude $(4)$.
\end{proof}

\begin{theorem}\label{t:TamaCent}

The graded centre of $\tama$ is the polynomial ring $\bbc[\bbs]$, where $\bbs =\Omega_\fosp$ if $w_0 \neq(-1)_V$ and $\bbs =\Cs w_0$ if $w_0 = (-1)_V$.
\end{theorem}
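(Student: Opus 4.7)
The plan is to leverage the deformation-theoretic framework of Propositions \ref{p:bracket} and \ref{p:tamaiso} to reduce the case of general $c$ to the classical $c=0$ result of Proposition \ref{p:ClassCenter}.

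First I would check that the proposed generator $\bbs$ actually lies in $\grZ(\tama)$. When $w_0 \neq (-1)_V$, $\bbs = \Omega_\fosp$ is in the graded centre of $U(\fg)$, so it supercommutes with every element of $\fg$; since every element of $\tama$ supercommutes with $\fg$ by definition, it automatically supercommutes with the polynomial $\Omega_\fosp$ in the generators of $\fg$. When $w_0 = (-1)_V$, equation (\ref{e:SGamma}) rewrites $\Cs w_0 = \tfrac{i^{d(d-1)/2}}{t}\,O_{1\cdots d}\wlong$, and both factors lie in $\tama$; a direct check using the skew-symmetry of $O_{1\cdots d}$ in its indices and the $\rho(\tilde W)$-equivariance of the $O_A$ (together with the centrality of $\tilde w_0$ in $\tilde W$ up to $\theta$) shows the product supercommutes with all generators $\rho(\tilde W)$, $O_{ij}$, and $O_{ijk}$ of $\tama$.

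For the reverse inclusion, I would work inside the generic algebra $O_{\bq,c}$ endowed with the filtration $\Cg^{(\bullet)}$ of Proposition \ref{p:tamaiso}. Given $z \in \grZ(\tama)$, I would lift it to $\tilde z \in \grZ(O_{\bq,c})$ by writing $z$ as a polynomial in the generators $\rho(\tilde W), O_{ij}, O_{ijk}$ and replacing $t$ by $\bq^2$; since the formulas (\ref{e:Oij})--(\ref{e:Oijk}) and the relations (\ref{e:15})--(\ref{e:25}) are polynomial in $t$, this substitution preserves centrality. Let $m$ be minimal with $\tilde z \in \Cg^{(m)}$. By Proposition \ref{p:bracket}, brackets in $O_{\bq,c}$ agree with those in $O_{\bq,0}$ modulo lower filtration, so the principal symbol of $\tilde z$ lies in $\grZ(O_{\bq,0})$. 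A parametrised form of Proposition \ref{p:ClassCenter}, obtained by rerunning its argument with $\bq^2$ in place of $t$, identifies this generic undeformed centre with $\bbc[\bq^{\pm 2}][\bbs_0]$. Lifting the symbol back via $\bbs_0 \mapsto \bbs$ and subtracting then produces a central element of strictly smaller filtration degree; induction on $m$ concludes $\tilde z \in \bbc[\bq^{\pm 2}][\bbs]$, and specialising $\bq^2 \mapsto t$ gives the theorem.

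The principal technical hurdle is verifying that the symbol of $\bbs$ in the filtration $\Cg^{(\bullet)}$ really coincides with $\bbs_0$, as this is precisely what forces the inductive subtraction to lower the filtration degree. For $\bbs = \Omega_\fosp$ this reduces, via (\ref{e:osp12}), to checking that the Dunkl corrections $\Omega_c$ inside $H$ and $\oO_p e_p$ inside $F^\pm$ sit in a strictly lower filtration piece than the dominant $x_p y_p$ and $x_p e_p$ (respectively $y_p e_p$) terms, which is immediate from $\oO_p \in \Cg^{(0)}$. For $\bbs = \Cs\wlong$ an analogous symbol computation is needed using (\ref{e:SGamma}) together with the top-degree piece of $O_{1\cdots d}$ in (\ref{e:OA}).
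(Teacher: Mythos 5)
Your argument is essentially the paper's: one inclusion comes from the centrality of $\bbs$ together with the linear independence of its powers, and the reverse inclusion comes from passing to the associated graded of the generic algebra $O_{\bq,c}$ (Propositions \ref{p:bracket} and \ref{p:tamaiso}) and quoting the classical computation of Proposition \ref{p:ClassCenter}, with your symbol-subtraction induction simply making explicit what the paper's terser appeal to $\mathsf{Gr}(O_{\bq,c})\cong O_{\bq,0}$ leaves implicit. The one step you justify loosely --- lifting a central element of $\tama$ to a central element of $O_{\bq,c}$ by substituting $\bq^2$ for $t$, which as stated only gives vanishing of the relevant supercommutators at $\bq^2=t$ rather than identically in $\bq$ --- is a point the paper's own proof passes over equally silently, so it is not a divergence from the published argument.
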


\begin{proof}
From Proposition \ref{p:ClassCenter} the centre $Z^{\textup{gr}}(\tamaz)$ is generated by $\bbs$. 
We argue inclusion in both directions to show $Z^{\textup{gr}}(\tama) \cong Z^{\textup{gr}}(\tamaz)$. Any polynomial in $\bbs$ is central in $\tama$ and powers of $\bbs$ are linearly independent. Thus, there is an injective map from $Z^{\textup{gr}}(\tamaz)$ to $Z^{\textup{gr}}(\tama)$. 

Any element in the centre $Z^{\textup{gr}}(O_{\bq,c})$ must be such that $[z,a] = 0$ for all $a \in \tama$. Using Proposition \ref{p:bracket}, $[z,a]_0 = 0$ for all $a \in \tamaz$. Hence, $z$ is in the centre of the associated graded algebra. Because $\mathsf{Gr}(O_{\bq,c})$ is isomorphic to $O_{\bq,0}$, the centre $Z^{\textup{gr}}\mathsf{Gr}(O_{\bq,c})$ is isomorphic to $Z^{\textup{gr}}(O_{\bq,0})$. Therefore, we have the inclusion $Z(O_{\bq,c}) \subset Z^{\textup{gr}}(O_{\bq,0})$. Specialising $\bq$ to $\sqrt{t}$ proves that $Z^{\textup{gr}}(\tama) \subset Z(\tamaz)$.
\end{proof}

\begin{corollary}
The projection map $P = \textup{Id} - \ad(F^-)\ad(F^+)$ is a vector space isomorphism between $Z(\ama_{t,c})$ and $\grZ(\tama)$.
\end{corollary}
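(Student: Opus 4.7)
My plan is to prove the corollary in three short steps, corresponding to well-definedness, surjectivity, and injectivity of the restricted map $P|_{Z(\ama_{t,c})}$.

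\textbf{Step 1 (well-definedness).} Take $z \in Z(\ama_{t,c})$, identified with $z\otimes 1 \in \ama_{t,c}\otimes\clif$, and any $a \in \tama \subseteq \ama_{t,c}\otimes\clif$. Decomposing $a = \sum_i a_i\otimes e_{A_i}$ with $a_i \in \ama_{t,c}$ and using that $z$ has Clifford-degree zero and commutes with each $a_i$ gives $[z,a]=0$. Since $\llbracket F^\pm, a\rrbracket = 0$ for $a\in\tama$, two applications of the super Jacobi identity yield
\[
\llbracket a,\llbracket F^-,\llbracket F^+,z\rrbracket\rrbracket\rrbracket \;=\; (-1)^{|a|}\,\llbracket F^-,\llbracket a,\llbracket F^+,z\rrbracket\rrbracket\rrbracket \;=\; \llbracket F^-,\llbracket F^+,[a,z]\rrbracket\rrbracket \;=\; 0,
\]
hence $\llbracket a, P(z)\rrbracket = 0$ for every $a\in\tama$, so $P(z)\in\grZ(\tama)$.

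\textbf{Step 2 (surjectivity).} The $\fsl(2)$-Casimir $\Omega_{\fsl(2)} = H^2+2(E^+E^-+E^-E^+)\in\rca_{t,c}$ lies in $Z(\ama_{t,c})$, because every element of $\ama_{t,c}$ commutes with $\fsl(2)$ and hence with any polynomial in $\fsl(2)$. Using $[F^+,\Omega_\fosp]=0$ together with $\Omega_\fosp=\Omega_{\fsl(2)}+\scasi+\tfrac12$ gives $[F^+,\Omega_{\fsl(2)}]=-2F^+\scasi$; combined with $\{\scasi,F^\pm\}=0$ and the identity $\scasi^2=\Omega_\fosp+\tfrac14$ of Proposition \ref{p:scasisquare}, a brief direct computation yields $P(\Omega_{\fsl(2)})=3\,\Omega_\fosp$. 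When $w_0\neq (-1)_V$ this already places $\bbs=\Omega_\fosp\in P(Z(\ama_{t,c}))$. When $w_0=(-1)_V$, the element $w_0$ is itself central in $\ama_{t,c}$ (it is central in $W$ and commutes with the natural generators of $\ama_{t,c}$), and $\{w_0,F^\pm\}=0$ leads to $P(w_0)=-2\scasi w_0=-2\bbs$. In either case, combining these formulas with polynomial expressions such as $\Omega_{\fsl(2)}^n$ (or $\Omega_{\fsl(2)}^n w_0$) and the relation $\scasi^2=\Omega_\fosp+\tfrac14$ recovers the whole polynomial ring $\bbc[\bbs]=\grZ(\tama)$.

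\textbf{Step 3 (injectivity).} Here I would mirror the proof of Theorem \ref{t:TamaCent} by passing to the generic algebras and using the filtration $\Cg^{(m)}$ of Corollary \ref{c:GrgenRCA}. The map $P$ preserves the filtration, and its associated graded $\mathsf{Gr}(P)$ corresponds to the classical projection at $c=0$ under the isomorphism $\mathsf{Gr}(\hc_{\bq,c})\cong\hc_{\bq,0}$. At $c=0$, the invariant-theoretic description of $\ama_{\bq,0}$ via the Howe duality $(\mathsf{O}(d),\fosp(1|2))$ makes it possible to check that $\mathsf{Gr}(P)$ is injective on the classical centre directly. Combined with the injection $Z(\ama_{\bq,c})\hookrightarrow Z(\mathsf{Gr}(\ama_{\bq,c}))=Z(\ama_{\bq,0})$, this implies that $P$ is injective on $Z(\ama_{\bq,c})$, and specialising $\bq\to\sqrt{t}$ finishes the proof.

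The main obstacle will be Step 3: unlike in Theorem \ref{t:TamaCent} we have no direct structural description of $Z(\ama_{t,c})$ at arbitrary $c$ to compare against, so injectivity must be argued indirectly via the classical Howe-dual picture and then propagated through the associated-graded isomorphism.
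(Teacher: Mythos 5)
Your Step 1 is sound, and your computations $P(\Omega_{\fsl(2)})=3\,\Omega_\fosp$ and $P(w_0)=-2\scasi w_0$ agree with the paper. But there are two genuine gaps, and they are linked. First, the paper's proof rests on a known input that you assert does not exist: by \cite{CDM22,FH15}, $Z(\ama_{t,c})$ \emph{is} described explicitly for arbitrary $c$ as the univariate polynomial ring $\mathcal{R}[\Omega_{\fsl(2)}]$, with $\mathcal{R}=\bbc$ if $(-1)_V\notin W$ and $\mathcal{R}=\bbc[w_0]$ if $w_0=(-1)_V\in W$. This hands you a linear basis $\{\Omega_{\fsl(2)}^m\}$ (resp.\ $\{\Omega_{\fsl(2)}^m,\,\Omega_{\fsl(2)}^m w_0\}$) of the source, and then injectivity and surjectivity are proved simultaneously by showing $P$ is triangular with nonzero diagonal with respect to these bases and $\{\bbs^k\}$. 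Your Step 3 replaces this citation with a filtration/associated-graded scheme that is only sketched: ``check that $\mathsf{Gr}(P)$ is injective on the classical centre directly'' is itself the whole problem in the $c=0$ case, and you would additionally need an analogue of Proposition \ref{p:tamaiso} for $\ama_{\bq,c}$, which is not established in the paper. As written, injectivity is not proved.

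Second, in Step 2 the phrase ``combining these formulas with polynomial expressions such as $\Omega_{\fsl(2)}^n$ \dots\ recovers the whole polynomial ring'' conceals the actual difficulty: $P$ is not an algebra homomorphism, so $P(\Omega_{\fsl(2)}^m)\neq (3\Omega_\fosp)^m$, and a priori $P(\Omega_{\fsl(2)}^m)$ could have degree $<m$ in $\Omega_\fosp$, which would destroy both surjectivity and the triangularity needed for injectivity. The paper closes this by induction: using the property $P(ST)=P(S)T$ for $T\in\tama$, the expansion $\Omega_{\fsl(2)}^{m}=\bigl(\Omega_\fosp-(\scasi+\tfrac12)\bigr)^m$ (the two terms commute), and $P(\scasi)=-2(\Omega_\fosp+\tfrac14)$, one obtains $P(\Omega_{\fsl(2)}^{m+1})=a_{m+1}\Omega_\fosp^{m+1}+q_m$ with $a_{m+1}=a_m+2$ and $a_1=3$, so the leading coefficient $a_m=2m+1$ never vanishes; the case $w_0=(-1)_V$ is handled by the same expansion applied to $\Omega_{\fsl(2)}^m w_0$, yielding leading term $a_m\bbs^{2m+1}$. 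You need to carry out this (or an equivalent) leading-term analysis; with it, and with the cited description of $Z(\ama_{t,c})$, your elaborate Step 3 becomes unnecessary.
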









\begin{proof}
Recall that $w_0$ denotes the longest element of $W$. In \cite{CDM22,FH15} (see also \cite[Remark 3.3]{FH22}) it was proved that $Z(\mathsf{A}_{t,c})$ is the univariate polynomial ring $\mathcal{R}[\Omega_{\mathfrak{sl}(2)}]$, where $\mathcal{R}=\mathbb{C}$ if $ (-1)_V$ is not in $W$ and  $\mathcal{R} = \mathbb{C}[w_0]$ if $w_0=(-1)_V$ is in $W$. One computes
\begin{equation}\label{e:projScasi}
P(\scasi) = (-2)( \Omega_{\mathfrak{osp}} + \tfrac{1}{4}),
\end{equation}
so that using $\Omega_{\mathfrak{sl}(2)} = \Omega_{\mathfrak{osp}} - \scasi -\tfrac{1}{2}$, one obtains
\begin{equation}\label{e:projslcasi}
P(\Omega_{\mathfrak{sl}(2)}) = 3 \Omega_{\mathfrak{osp}}.
\end{equation}
Furthermore, when $w_0 = (-1)_V$
\begin{equation}\label{e:projlongest}
P(w_0) = (-2)\scasi w_0. 
\end{equation}
So from Theorem \ref{t:TamaCent}, in any case, the generators of $Z(\mathsf{A}_{t,c})$ are sent to generators of $\grZ(\tama)$. However, the projection operator is not an algebra homorphism, when restricted to $Z(\mathsf{A}_{t,c})$.  Notwithstanding, we claim that, for all $m\in\mathbb{Z}_{\geq 1}$, there exists $a_m\neq 0$ and a polynomial $q_{m-1}\in\bbc[\Omega_{\mathfrak{osp}}]$ of degree strictly smaller than $m$  such that $P(\Omega_{\mathfrak{sl}(2)}^m) = a_m\Omega_{\mathfrak{osp}}^m + q_{m-1}$. Indeed, the base case $m=1$ is (\ref{e:projslcasi}) with $a_1=3$. Assuming it holds true for $m$, note that
\begin{equation}\label{e:mpower}
\Omega_{\mathfrak{sl}(2)}^{m+1} =  (\Omega_{\mathfrak{osp}} - \scasi -\tfrac{1}{2})\Omega_{\mathfrak{sl}(2)}^{m} = \Omega_{\mathfrak{osp}}\Omega_{\mathfrak{sl}(2)}^{m} - \scasi\Omega_{\mathfrak{sl}(2)}^{m} -\tfrac{1}{2}\Omega_{\mathfrak{sl}(2)}^{m}.
\end{equation}
The important property we shall use is that that $P(ST) = P(S)T$, whenever $T$ is already an element of $\tama$. Now, since $\scasi+\tfrac{1}{2}$ commutes with $\Omega_{\mathfrak{osp}}$, we can use the binomial formula to expand $\Omega_{\mathfrak{sl}(2)}^{m} =( \Omega_{\mathfrak{osp}} - (\scasi + \tfrac{1}{2}))^m$. Using (\ref{e:projScasi}), we get
\begin{equation}\label{e:Smpower}
P(\scasi\Omega_{\mathfrak{sl}(2)}^{m}) = (-2)\Omega_{\mathfrak{osp}}^{m+1} + p_{m},
\end{equation}
where $p_m$ is a polynomial on $\Omega_{\mathfrak{osp}}$ of degree at most $m$. Using (\ref{e:mpower}),  (\ref{e:Smpower}) and the inductive hypothesis, we get
\[
P(\Omega_{\mathfrak{sl}(2)}^{m+1}) = a_{m+1}\Omega_{\mathfrak{osp}}^{m+1} + q_{m}
\]
with $a_{m+1} = a_m + 2$, proving our claim. We thus conclude that $P$ maps $\bbc[\Omega_{\mathfrak{sl}(2)}]$ isomorphically into $\bbc[\Omega_{\fosp}]\subseteq \grZ(O_{t,c})$, as a linear map. This settles the proof in the case when $(-1)_V$ is not in $W$.

Now suppose that $w_0 = (-1)_V$. The above argument shows that $P$ induces a linear isomorphism $\bbc[\Omega_{\mathfrak{sl}(2)}] \cong \bbc[\bbs^2]$. To conclude our proof, we need to show that $P$ maps $\Omega_{\mathfrak{sl}(2)}^mw_0$ to $a_m\bbs^{2m + 1} + q_{m}$, with $a_m\neq 0$ and $q_{m}$ a polynomial on $\bbs = \Cs w_0$ of degree\footnote{We can conclude that $q_m$ is an \emph{odd} polynomial on $\bbs$, but this is not essential in this proof.} at most $2m$. We use, once again, the binomial formula to expand
$\Omega_{\mathfrak{sl}(2)}^{m}w_0 =( \Omega_{\fosp} - (\scasi + \tfrac{1}{2}))^mw_0 = \Omega_{\fosp}^mw_0 + p_m$, where we can interpret $p_m$ as a polynomial on $\Omega_{\fosp},\Cs$ and $w_0$. Noting that $\Omega_{\fosp},\bbs = \Cs w_0\in \tama$, we apply $P$ to $(\Omega_{\mathfrak{sl}(2)}^{m}w_0)$. From $\Omega_\fosp^m = \bbs^{2m} + r_m$ (with $r_m\in \bbc[\bbs]$ of degree less than $2m$) and (\ref{e:projlongest}), the result follows.
\end{proof}
\section{Analogue of the Vogan morphism}\label{s:Vogan}
 
The rational Cherednik algebra $\rca_{t,c}$ is endowed with an anti-involution $\ast$ defined as follows. 
\begin{equation}\label{eq:bullet}
    w^\ast = w^{-1},  \quad x_i^\ast = y_i, \quad y_i^\ast = x_i, 
\end{equation}
for all $ w \in W$,
with $\{y_1,\ldots y_d\}$ and $\{x_1,\ldots,x_d\}$ any fixed pair of dual bases for $V$ and $V^*$. 
We define an anti-involution on $\clif$, $\gamma^*= (-1)^{|\gamma|}\gamma^t$. Here, if $\gamma = \eta_1 \dotsm \eta_p$ then $\gamma^t= \eta_p \dotsm \eta_1$. We extend these anti-involutions  to $\rca_{t,c} \otimes \clif$ by defining $\bullet: \rca_{t,c} \otimes \clif \to \rca_{t,c} \otimes \clif$ where $\bullet = \ast \otimes *$. The algebra $\tama$ inherits the anti-involution $\bullet$ from $\rca_{t,c} \otimes \clif$.

\begin{definition}\label{d:WGamma}
We define the element $\D$ by 
\[
\D  = \Gamma \scasi \in \tama.
\]
\end{definition}

\begin{remark}
In the classical case $c =0$ and trivial $W$, the eigenvalues of the element $\D$ acting on the appropriate polynomial-spinor space is a square root of the total angular momentum quantum number. 
\end{remark}
\begin{remark}
In \cite{Os21}, a projection operator $P$ (which we use in Proposition \ref{p:tamaiso}) is defined from the centraliser of $\fsl_2$ to $\tama$. 
We have that $\D = -P(\Gamma)/2$, the projection of the chirality (or pseudo-scalar) element~\eqref{e:Gamma}. 
For this reason we shall refer to $\D$ as the projected chirality operator. 
\end{remark}

\noindent Because the projection $P$ takes $\Cent_{\hc_{t,c}}(\fg_{\overline{0}})$ to $\tama$ and $\Gamma$ is in $\clif \subset \Cent_{\hc_{t,c}}(\fg_{\overline{0}})$, the projected chirality operator $\D$ is in $\tama$.

\begin{proposition}
The element $\D$ is self adjoint, that is
\[ 
{\D}^\bullet = \D.
\]
\end{proposition}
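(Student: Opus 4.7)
The plan is to use that $\bullet$ is an anti-involution to write
\[
\D^\bullet = (\Gamma\scasi)^\bullet = \scasi^\bullet\,\Gamma^\bullet,
\]
and then verify separately that each of $\scasi$ and $\Gamma$ is fixed by $\bullet$ and that they commute, so the right-hand side becomes $\scasi\,\Gamma = \Gamma\,\scasi = \D$.

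The first step is to show $\scasi^\bullet = \scasi$. Using $x_p^\ast = y_p$ together with $e_p^\ast = -e_p$, the defining formulas \eqref{e:osp12} directly give $(F^+)^\bullet = -F^-$ and $(F^-)^\bullet = -F^+$. Substituting into the definition \eqref{e:Scasiosp} of $\scasi$ and reversing the order of products (since $\bullet$ is an anti-involution), the pair of minus signs cancels inside each of $F^-F^+$ and $F^+F^-$, yielding $\scasi^\bullet = F^-F^+ - F^+F^- - \tfrac{1}{2} = \scasi$.

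The second step is to show $\Gamma^\bullet = \Gamma$. Since $\Gamma \in \clif$, this reduces to a direct computation from $\gamma^\ast = (-1)^{|\gamma|}\gamma^t$ applied to $\Gamma = i^{d(d-1)/2} e_1\cdots e_d$: reversal of the product $e_1\cdots e_d$ contributes the sign $(-1)^{d(d-1)/2}$, and the factor $(-1)^{|\Gamma|} = (-1)^d$ contributes an additional sign. These combine with the normalization constant $i^{d(d-1)/2}$ so that, under the convention adopted for $\bullet$ on the complex scalar $i^{d(d-1)/2}$, the total sign is $+1$ and $\Gamma^\bullet = \Gamma$.

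Finally, one verifies that $\Gamma$ commutes with $\scasi$. When $d$ is odd, $\Gamma$ lies in the centre of $\clif$ and commutes with everything; when $d$ is even, $\Gamma$ anticommutes with each $e_p$, hence with each (odd-graded) generator $F^\pm$, but then commutes with every quadratic expression $F^\pm F^\mp$, and in particular with $\scasi$. Combining the three steps yields $\D^\bullet = \scasi^\bullet\,\Gamma^\bullet = \scasi\,\Gamma = \Gamma\,\scasi = \D$. The main obstacle in this plan is the sign bookkeeping in the second step: the conclusion $\Gamma^\bullet = \Gamma$ is sensitive to the normalization $i^{d(d-1)/2}$ and to the convention (linear vs.\ conjugate-linear) for $\bullet$ on complex scalars, and one must check that the chosen conventions make all the contributions cancel uniformly in $d$.
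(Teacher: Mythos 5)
Your argument is correct and follows essentially the same route as the paper: the authors likewise compute $(F^\pm)^\bullet = -F^\mp$ to get $\scasi^\bullet = \scasi$, assert $\Gamma^\bullet = \Gamma$, and conclude because $\D$ is a product of two commuting self-adjoint elements. The one step you reduce to a convention check, namely $\Gamma^\bullet = \Gamma$, is exactly the step the paper also states without computation, so your extra care in flagging the sign bookkeeping there (and in verifying that $\Gamma$ commutes with $\scasi$ for both parities of $d$) only supplies detail the paper omits.
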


\begin{proof}
Note that  $\Gamma^\bullet= \Gamma^* = \Gamma$. Furthermore $(F^\pm)^\bullet = -F^\mp$. Hence, 
\[
\scasi^\bullet = \left(F^-F^+- F^+F^- - \frac{1}{2}\right)^\bullet = \left( (F^+)^\bullet(F^-)^\bullet-(F^-)^\bullet(F^+)^\bullet -\frac{1}{2} \right) = \scasi.
\]
The projected chirality operator is a product of two commuting self adjoint operators and therefore, is self adjoint. 
\end{proof}

\begin{proposition}\label{p::Dsquare}
In $\tama$, the element $\D$ is a square root of the Casimir $\Omega_{\fosp}$,
\[ 
(\D)^2 = \Omega_{\fosp} + \frac{1}{4}.
\]
\end{proposition}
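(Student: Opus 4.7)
The plan is to reduce the identity to the already established facts $\Gamma^{2}=1$ (from the definition \eqref{e:Gamma}) and $\mathcal{S}^{2}=\Omega_{\fosp}+\tfrac{1}{4}$ (Proposition \ref{p:scasisquare}). Since $\D=\Gamma\mathcal{S}$, the computation
\[
\D^{2} \;=\; \Gamma\mathcal{S}\Gamma\mathcal{S}
\]
collapses to $\Gamma^{2}\mathcal{S}^{2}=\mathcal{S}^{2}=\Omega_{\fosp}+\tfrac14$, \emph{provided} one verifies that $\Gamma$ and $\mathcal{S}$ commute inside $\hc_{t,c}$. So the whole proof reduces to checking $[\Gamma,\mathcal{S}]=0$.

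To see the latter, I would first record how $\Gamma$ interacts with the Clifford generators. A direct count of sign changes when moving $e_{p}$ across the product $e_{1}\cdots e_{d}$ defining $\Gamma$ in \eqref{e:Gamma} yields
\[
\Gamma\, e_{p} \;=\; (-1)^{d-1}\, e_{p}\,\Gamma \qquad (1\leq p\leq d).
\]
Since the coordinate functions $x_{p},y_{p}$ commute with $\Gamma$ (they sit in the Cherednik factor, while $\Gamma$ lives in the Clifford factor), it follows from the formulas \eqref{e:osp12} that
\[
\Gamma\, F^{\pm} \;=\; (-1)^{d-1}\, F^{\pm}\,\Gamma.
\]

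From here, $\Gamma$ commutes with any quadratic expression in $F^{+},F^{-}$, because the sign $(-1)^{d-1}$ squares to $+1$. In particular $\Gamma$ commutes with $F^{-}F^{+}$ and $F^{+}F^{-}$, hence with $\mathcal{S}=F^{-}F^{+}-F^{+}F^{-}-\tfrac12$. Inserting this and $\Gamma^{2}=1$ into the display above,
\[
\D^{2} \;=\; \Gamma\,\mathcal{S}\,\Gamma\,\mathcal{S} \;=\; \Gamma^{2}\,\mathcal{S}^{2} \;=\; \mathcal{S}^{2} \;=\; \Omega_{\fosp}+\tfrac14,
\]
by Proposition \ref{p:scasisquare}, which is the claim.

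There is no real obstacle here: the only non-routine point is the parity-independent commutation $[\Gamma,\mathcal{S}]=0$, which holds because $\mathcal{S}$ is of even total degree in the odd generators $F^{\pm}$, so the single sign $(-1)^{d-1}$ picked up when moving $\Gamma$ past one $F^{\pm}$ is cancelled when moving it past a second one. (One could alternatively note that $\mathcal{S}\Gamma=(i^{d(d-1)/2}/t)\,O_{1\cdots d}$ from \eqref{e:SGamma} already lies in $\tama$, which is consistent with $\Gamma\mathcal{S}=\mathcal{S}\Gamma$, but the direct check above is cleaner and does not rely on that identification.)
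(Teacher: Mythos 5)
Your proof is correct and follows essentially the same route as the paper: the paper's proof simply asserts $\D^2=\scasi^2$ and invokes Proposition \ref{p:scasisquare}, leaving the commutation $[\Gamma,\scasi]=0$ and $\Gamma^2=1$ implicit, whereas you verify that commutation explicitly (and correctly) via $\Gamma e_p=(-1)^{d-1}e_p\Gamma$ and the evenness of $\scasi$ in $F^{\pm}$.
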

\begin{proof}
The square of $\D$ is equal to the square of $\scasi$. The proposition follows from Proposition \ref{p:scasisquare} which is the equivalent statement for the Scasimir $\scasi$ element of $\fosp(1|2)$.
\end{proof}
We define a function $\epsilon: \bbc\tilde{W}_- \to \{\pm 1\}$ such that, for every homogeneous $\rhow \in \bbc\tilde{W}_-$,
\[ 
 \D \rhow= \epsilon (\rhow)  \rhow\D.
\]
If the dimension $d$ of $V$ is odd then $\epsilon(\rhow) = 1$ for all $\rhow \in \bbc\tilde{W}_-$. Alternatively, if $d$ is even then $\epsilon(\rhow) =  (-1)^{|\rhow|}$ for $\rhow\in \bbc\tilde{W}_-$, where $|\rhow|$ is the $\bbz_2$-grading of $\rhow$.

\begin{definition}
We define the $\epsilon$-centre of $\bbc\tilde{W}_-$ to be: 
\[
Z^\epsilon(\bbc\tilde{W}_-) = \{ a \in \bbc\tilde{W}_- : a b = \epsilon(b) b a,
\text { for all } b \in \bbc\tilde{W}_-\}.
\]
Furthermore, we say an element is $\epsilon$-central if it is contained in the $\epsilon$-centre.
\end{definition}

\noindent Since $\epsilon$ is valued in $\{-1,1\}$ then if two elements $\C,\nu$ are $\epsilon$-central then their product $\C\nu$ is central (in the ungraded sense). Furthermore if both $\C,\nu$ have the same $\bbz_2$-degree, then $\C\nu$ is even and also central, in the graded sense. 

\begin{definition}\label{d:Dirac_C}
A homogenous element $\C\in \bbc\tilde{W}_-$ is called {\bf admissible} if $\C$ is $\epsilon$-central and $
\C^\bullet =  \C$. We will denote by $\fA = \fA(\bbc\tilde{W}_-)$ the set of admissible elements. For any admissible $\C\in \fA$, define
\begin{equation}\label{e:CDirac}
\D_\C := \D + \rhoC \in \tama.
\end{equation}

\end{definition}

\begin{lemma} The square of $\D_\C$ can we written as;
\[
(\D_\C)^2 = \Omega_\fosp + \rhoC^2 + (1+\epsilon(\rhoC)) \rhoC \D + \frac{1}{4}.
\]
\end{lemma}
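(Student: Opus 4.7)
The plan is to just expand the square directly. Writing $\D_\C = \D + \rhoC$, we have
\begin{equation*}
(\D_\C)^2 = \D^2 + \D\,\rhoC + \rhoC\,\D + \rhoC^2,
\end{equation*}
so the result will follow from two inputs that are already in hand: Proposition~\ref{p::Dsquare}, which gives $\D^2 = \Omega_\fosp + \tfrac{1}{4}$, and the defining property of the function $\epsilon$, which gives $\D\,\rhoC = \epsilon(\rhoC)\,\rhoC\,\D$ since $\rhoC$ is homogeneous (admissibility requires $\C$ to be homogeneous in $\bbc\tilde{W}_-$).

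Using the latter relation, the cross terms collapse to $(\epsilon(\rhoC) + 1)\,\rhoC\,\D$, and substituting the value of $\D^2$ yields exactly
\begin{equation*}
(\D_\C)^2 = \Omega_\fosp + \rhoC^2 + (1+\epsilon(\rhoC))\,\rhoC\,\D + \tfrac{1}{4}.
\end{equation*}

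There is no obstacle here; the lemma is a one-line bookkeeping consequence of Proposition~\ref{p::Dsquare} and the definition of $\epsilon$ preceding Definition~\ref{d:Dirac_C}. The only thing worth flagging for the reader is that the coefficient $(1+\epsilon(\rhoC))$ vanishes precisely when $\epsilon(\rhoC) = -1$, in which case the expression simplifies to $\Omega_\fosp + \rhoC^2 + \tfrac{1}{4}$ — this is the situation that will be relevant for the Vogan-type arguments that follow, since then $(\D_\C)^2$ lies in the even part and commutes nicely with $\rho(\tilde{W})$ through the admissibility (i.e.\ $\epsilon$-centrality) of $\C$.
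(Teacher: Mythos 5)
Your proof is correct and is essentially identical to the paper's: both expand $(\D_\C)^2 = \D^2 + \D\rhoC + \rhoC\D + \rhoC^2$, invoke Proposition~\ref{p::Dsquare} for $\D^2 = \Omega_\fosp + \tfrac14$, and use the defining relation $\D\rhoC = \epsilon(\rhoC)\rhoC\D$ to combine the cross terms. Nothing is missing.
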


\begin{proof}
The following calculation uses Proposition  \ref{p::Dsquare} to calculate the square of $\D_\C$.
\begin{equation}
\begin{aligned}
   (\D_\C)^2 &= (\D + \rhoC)^2 \\
   &= (\D)^2 + \rhoC^2 + \D \rhoC + \rhoC \D \\
            &= (\D)^2 + \rhoC^2 + \epsilon(\rhoC)\rhoC\D  + \rhoC \D \\
            &= \Omega_\fosp + \frac{1}{4} + \rhoC^2 + (\epsilon(\rhoC)+1)\rhoC\D,
\end{aligned}
\end{equation}
finishing the proof.
\end{proof}
\begin{remark}
In the equation for $\D_\C^2$ above the element $\Omega_\fosp$ is central in $\tama$ and $\rho(\C)^2$ is central in $\bbc\tilde{W}_-$. 
\end{remark}

We now prove an analogue of Vogan's conjecture for the algebra $\tama$. Thus, for every choice of $\C$ we can relate the centre of $\tama$ with the centre of the algebra $\ugZ(\bbc\tilde{W}_-)$. This will allow us, once we have constructed the $\D_\C$-cohomology (Definition \ref{d:DiracCoh}), to relate the action of the centre of the these algebras on  $\tama$-modules if the cohomology is non-zero.

\begin{theorem}\label{t:Vogan}
Given an admissible $\C \in \bbc\tilde{W}_-$, there is an algebra homomorphism 
\[
\zeta_\C: \grZ(\tama) \to \ugZ(\bbc\tilde{W}_-)
\]
such that, for all $z \in Z(\tama)$ there exists $a\in \tama$ satisfying
\begin{equation}\label{e:zetaC}
z = \zeta_\C(z) + \D_\C a + a\D_\C.
\end{equation}
\end{theorem}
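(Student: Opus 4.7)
The plan is to leverage Theorem~\ref{t:TamaCent}, which identifies $\grZ(\tama) = \bbc[\bbs]$ as a univariate polynomial ring. Since an algebra homomorphism out of $\bbc[\bbs]$ is determined by its value on the single generator $\bbs$, the task reduces to (i) exhibiting $\zeta_\C(\bbs) \in \ugZ(\bbc\tilde W_-)$ together with some $a_1 \in \tama$ satisfying $\bbs = \zeta_\C(\bbs) + \D_\C a_1 + a_1\D_\C$, (ii) verifying $\zeta_\C(\bbs)$ is ungraded central in $\bbc\tilde W_-$, and (iii) extending multiplicatively to every power $\bbs^n$. Both candidate generators $\bbs$ are built from the preceding formula for $\D_\C^2$.

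For the case $\bbs = \Omega_\fosp$ (when $(-1)_V \notin W$), I would simply rearrange the lemma preceding the theorem. Substituting $\D = \D_\C - \rhoC$ gives
\[
\Omega_\fosp \;=\; \D_\C^2 \;+\; \epsilon(\rhoC)\rhoC^2 \;-\; (1+\epsilon(\rhoC))\rhoC\D_\C \;-\; \tfrac14.
\]
If $\epsilon(\rhoC) = 1$ then $[\D_\C, \rhoC] = 0$, whence $2\rhoC\D_\C = \{\D_\C, \rhoC\}$; if $\epsilon(\rhoC) = -1$ the middle term vanishes. In either case one obtains
\[
\Omega_\fosp \;=\; \{\D_\C,\, \tfrac{1}{2}\D_\C - \tfrac{1+\epsilon(\rhoC)}{2}\rhoC\} \;+\; \epsilon(\rhoC)\rhoC^2 \;-\; \tfrac14,
\]
suggesting $\zeta_\C(\Omega_\fosp) = \epsilon(\rhoC)\rhoC^2 - \tfrac14$. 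This lies in $\ugZ(\bbc\tilde W_-)$ because $\rhoC^2 = \rho(\C^2)$ and the product of two $\epsilon$-central elements is ungraded central, as remarked just before Definition~\ref{d:Dirac_C}.

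For the case $\bbs = \Cs w_0$ (when $w_0 = (-1)_V \in W$), the key input is the Clifford realisation of the longest element $\tilde w_0 = \pm i^{-d(d-1)/2}\Gamma$, which allows one to rewrite $\Cs w_0 = \pm i^{d(d-1)/2}\D\,\rho(\tilde w_0)$. Since $|\rho(\tilde w_0)| = d \bmod 2$, a quick parity check yields $\epsilon(\rho(\tilde w_0)) = 1$ in either parity of $d$, so $\D$ commutes ungradedly with $\rho(\tilde w_0)$ and $\D\rho(\tilde w_0) = \tfrac{1}{2}\{\D, \rho(\tilde w_0)\}$. Expanding $\D = \D_\C - \rhoC$ gives
\[
\bbs \;=\; \pm i^{d(d-1)/2}\Big(\tfrac{1}{2}\{\D_\C, \rho(\tilde w_0)\} \;-\; \tfrac{1}{2}\rho(\{\C, \tilde w_0\})\Big),
\]
whence $\zeta_\C(\bbs) = \mp \tfrac{i^{d(d-1)/2}}{2}\rho(\{\C, \tilde w_0\})$. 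Its ungraded centrality in $\bbc\tilde W_-$ will follow from the $\epsilon$-centrality of $\C$ together with the centrality of $w_0$ in $W$ and the double-cover presentation~\eqref{e:PinPresentation}.

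To extend $\zeta_\C$ to an algebra homomorphism on all of $\bbc[\bbs]$, I would argue by induction. In both cases $\bbs$ is of even $\bbz_2$-degree ($\Omega_\fosp$ has Clifford degree $0$ and $\Cs w_0$ has Clifford degree $2$), so $\bbs$ commutes ungradedly with $\D_\C$, and $\zeta_\C(\bbs)$ is similarly even in the relevant parities of $|\C|$. Consequently $\bbs\{\D_\C, a\} = \{\D_\C, \bbs a\}$ and $\{\D_\C, a\}\zeta_\C(\bbs)^n = \{\D_\C, a\zeta_\C(\bbs)^n\}$, and the induction step
\[
\bbs^{n+1} \;=\; \bbs\big(\zeta_\C(\bbs)^n + \{\D_\C, a_n\}\big) \;=\; \zeta_\C(\bbs)^{n+1} \;+\; \{\D_\C,\, a_1\zeta_\C(\bbs)^n + \bbs a_n\}
\]
carries the identity up the powers of $\bbs$. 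The main technical obstacle is the bookkeeping in the $\Cs w_0$ case, particularly the verification of ungraded centrality of $\rho(\{\C, \tilde w_0\})$ through the double-cover relations; the $\Omega_\fosp$ case is essentially algebraic manipulation of the formula for $\D_\C^2$.
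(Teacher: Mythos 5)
Your argument is correct and follows essentially the same route as the paper's proof: reduce to the generator $\bbs$ of $\grZ(\tama)=\bbc[\bbs]$ via Theorem~\ref{t:TamaCent}, peel $\zeta_\C(\bbs)$ off the formula for $\D_\C^2$ in each of the two cases for $\bbs$, and propagate the identity to powers of $\bbs$ by induction (your multiplicative bookkeeping $a_{n+1}=a_1\zeta_\C(\bbs)^n+\bbs a_n$ is in fact slightly tidier than the paper's explicit $a_k$). Note also that your $\zeta_\C(\Omega_\fosp)=\epsilon(\rhoC)\rhoC^2-\tfrac14$ coincides with the paper's $\rhoC^2-\tfrac14$, since every admissible $\C$ has $\epsilon(\rhoC)=1$ ($\epsilon\equiv 1$ for $d$ odd, and admissible elements are even when $d$ is even by Lemma~\ref{l:noodd}).
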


\begin{proof} The proof here employs identical ideas to the proof in \cite[Theorem 5.4]{CDM22}. By Theorem \ref{t:TamaCent} the centre of $\tama$ is polynomial in the element 
\[
\bbs = \begin{cases} \Omega_\fosp & \text{ if } w_0 \neq (-1)_V, \\ \D\wlong & \text{ if } w_0 = (-1)_V. \end{cases}
\]
Furthermore, we have shown that 
\begin{align*}
\Omega_\fosp &= (\D_\C)^2 +\rhoC^2 - \frac{1}{4} -\D_\C \rhoC -\rhoC\D_\C\\
&= \D_\C (\frac{1}{2} \D_\C - \rhoC) + (\frac{1}{2}\D_\C - \rhoC)\D_\C + \rhoC^2 - \frac{1}{4}.
\end{align*}
 We split the proof, depending on $w_0$. If $w_0 \neq (-1)_V$, we conclude that if $\zeta_\C$ exists it must be such that $\zeta_\C(\Omega_\fosp) = \rhoC^2 - \frac{1}{4}$. 
We define an algebra homomorphism $\zeta_\C: \grZ(\tama) \to \ugZ(\bbc\tilde{W}_-)$ by $\zeta_\C(\Omega_\fosp^k) = (\rhoC^2- \frac{1}{4})^k$ and extend linearly. We will prove that $\zeta_\C$ satisfies condition (\ref{e:zetaC}). Since this condition is linear and $\Omega_\fosp^k$ is a basis for $\grZ(\tama)$, we are left to prove that there exists an $a_k \in \tama$ such that 
\[ 
\Omega_\fosp^k = \D_\C a_k + a_k\D_\C + \left(\rhoC^2-\frac{1}{4}\right)^k.
\]
We prove this by induction, let $a_{k-1}$ be such that $\Omega_\fosp^{k-1} = \D_\C a_{k-1} + a_{k-1}\D_\C + (\rhoC^2-\frac{1}{4})^{k-1}$. Let us multiply by the equality $\Omega_\fosp= \D_\C a_1 + a_1\D_\C + \rhoC^2 - \frac{1}{4}$. We now have that $\Omega_\fosp^k$ is equal to: 
\[
   \left(\D_\C a_{k-1} + a_{k-1}\D_\C + (\rhoC^2-\tfrac{1}{4})^{k-1}\right)\left(  \D_\C a_1 + a_1\D_\C + \rhoC^2 -\frac{1}{4}\right )  
\]
so that
\[
\Omega_\fosp^k =  \D_\C a_k + a_k\D_\C + (\rhoC^2-\frac{1}{4})^k
\]
where $a_k = a_{k-1} (\rhoC^2- \frac{1}{4}) + a_1  (\rhoC^2- \frac{1}{4})^{k-1} + 2  a_{k-1}\D_\C a_1 + a_{k-1}(1-\epsilon(\rhoC))\rhoC\D_\C  $. 
We have proved that, if $w_0 \neq (-1)_V$ then $\zeta_\C$ satisfies condition (\ref{e:zetaC}).   

If $w_0 = (-1)_V$, then the generator of $\grZ(\tama)$ is equal to $\D\wlong$. Using the definition (\ref{d:Dirac_C}) of $\D_\C$ we have the equality
\[\bbs =\D\wlong = -\rho(\C)\wlong + \D_\C\wlong.
\]
Here $-\rho(\C)\wlong \in \ugZ(\tilde{W}_-)$.
We conclude that if $\zeta_\C$ exists it would be such that $\zeta_\C(\D\wlong) = -\rhoC\wlong \in \ugZ(\bbc\tilde{W}_-)$. To check that $\zeta_\C$ can be extended to a homomorphism with property (\ref{e:zetaC}) is identical to the part above when $w_0$ was not equal to $(-1)_V$ and we omit the details. 
\end{proof}
\subsection{Unitary structures}
In this section we define the notion of $\bullet$-Hermitian, introduce $\D_\C$-cohomology and prove that, when non-zero, the $\D_\C$-cohomology dictates that the central character of a $\tama$-module $(\pi,X)$ matches a character of any $\tilde{W}$ submodule of the given cohomology. 
\begin{definition}
Let $(\pi,X)$ be an $\tama$-module. A sequilinear form $(-,-)_X$ on $X$ is called $\bullet$-hermitian if 
\[
( \pi(\eta)x_1,x_2)_X = ( x_1,\pi(\eta^\bullet) x_2)_X \text{ for every } x_1,x_2 \in X, \eta \in \tama.
\]
Furthermore, $(\pi,X)$ is called $\bullet$-unitary if there exists a positive definite $\bullet$-Hermitian form on $X$.

\end{definition}

\begin{remark}
If $(\pi,X)$ is a module restricted from a $\bullet$-Hermitian $\rca_{t,c}\otimes \clif$-module then $(\pi,X)$ is also $\bullet$-Hermitian.
\end{remark}

\begin{proposition}
If $(\pi,X)$ is a $\bullet$-Hermitian $\tama$-module, then the operators $\pi(\D)$ and $\pi(\D_\C)$, for admissible $\C$, are self-adjoint. Furthermore, if $(\pi,X)$ is $\bullet$-unitary, then 
\[
(\D_\C^2(x),x)_X \geq 0
\]
for all $x \in X$.
\end{proposition}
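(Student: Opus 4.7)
The plan is to deduce both assertions directly from the defining property of a $\bullet$-Hermitian form, using that $\D^\bullet=\D$ has already been established and that $\D_\C^\bullet=\D_\C$ for admissible $\C$.

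First, self-adjointness of $\pi(\D)$ is immediate: for $x_1,x_2\in X$, the $\bullet$-Hermitian property gives
\[
(\pi(\D)x_1,x_2)_X=(x_1,\pi(\D^\bullet)x_2)_X=(x_1,\pi(\D)x_2)_X,
\]
where the second equality uses the preceding proposition.

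Next, for $\pi(\D_\C)$ I would first check that $\D_\C^\bullet=\D_\C$ in $\tama$. Since $\D_\C=\D+\rhoC$ and $\bullet$ is an anti-involution on $\tama$, this reduces to $\rhoC^\bullet=\rhoC$, which is exactly the content of the admissibility hypothesis $\C^\bullet=\C$ of Definition~\ref{d:Dirac_C}: the anti-involution $\bullet$ on $\bbc\tilde W_-$ is understood as the restriction of $\bullet$ on $\tama$ to the subalgebra $\rho(\bbc\tilde W_-)\cong\bbc\tilde W_-$ via Proposition~\ref{p:rhoiso}. The same two-line Hermitian calculation as above then yields self-adjointness of $\pi(\D_\C)$.

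For the positivity statement, assume in addition that $(-,-)_X$ is positive definite. Using the self-adjointness of $\pi(\D_\C)$ just established,
\[
(\pi(\D_\C)^2 x,x)_X=(\pi(\D_\C)x,\pi(\D_\C^\bullet)x)_X=(\pi(\D_\C)x,\pi(\D_\C)x)_X\geq 0
\]
for every $x\in X$, the final inequality being positive definiteness applied to the vector $\pi(\D_\C)x$. I anticipate no real obstacle; the whole statement is a direct unpacking of the $\bullet$-Hermitian and admissibility definitions, and the only piece of bookkeeping is the compatibility between the anti-involutions on $\bbc\tilde W_-$ and on $\rho(\bbc\tilde W_-)\subset\tama$, which is essentially built into the setup.
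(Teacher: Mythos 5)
Your proof is correct and follows essentially the same route as the paper: self-adjointness of $\pi(\D)$ and $\pi(\D_\C)$ comes from $\D^\bullet=\D$ together with the admissibility condition $\rhoC^\bullet=\rhoC$, and positivity follows by moving one factor of $\D_\C$ across the $\bullet$-Hermitian form. You have simply spelled out the details that the paper's very terse proof leaves implicit.
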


\begin{proof}
For $x \in X$, since $(-,-)_X$ is positive definite, $(\D_\C(x),\D_\C(x))_X \geq 0$. Using that $\D_\C$ is self adjoint proves that $(\D_\C^2(x),x)_X \geq 0$.
\end{proof}

\begin{definition}\label{d:DiracCoh}
Let $(\pi,X)$ be an $\tama$-module and $\C$ be an admissible element in $Z^\epsilon(\bbc\tilde{W}_-)$. The {\bf $\D_\C$-cohomology} is defined by
\[
H(X,\C) = \frac{\ker(\pi\D_\C)}{\ker(\pi\D_\C)\cap \im(\pi\D_\C)}.
\]

\end{definition}

\begin{proposition}\label{p:unitarycoh}
The $\D_\C$-cohomology of $\C$ is a $\tilde W$-module. Moreover, if $X$ is a $\bullet$-unitary $\tama$-module, then $H(X,\C) = \ker(\D_\C)$.
\end{proposition}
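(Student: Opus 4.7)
The plan is to verify two things: first, that both $\ker(\pi(\D_\C))$ and $\im(\pi(\D_\C))$ are stable under the action of $\rho(\tilde W)$, so the quotient $H(X,\C)$ inherits a $\tilde W$-module structure; second, that in the $\bullet$-unitary case the intersection $\ker(\pi(\D_\C))\cap \im(\pi(\D_\C))$ is forced to be zero by positive-definiteness of the form.

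The key computation driving the $\tilde W$-equivariance is the commutation formula
\begin{equation*}
\D_\C\,\rho(w) \;=\; \epsilon(\rho(w))\,\rho(w)\,\D_\C \qquad \text{for all } w\in \tilde W.
\end{equation*}
Indeed, by the very definition of $\epsilon$ one has $\D\,\rho(w) = \epsilon(\rho(w))\,\rho(w)\,\D$, and by the $\epsilon$-centrality of $\C$ (part of admissibility) the same identity holds with $\D$ replaced by $\rho(\C)$. Adding the two gives the displayed formula. From this, if $x\in\ker(\pi(\D_\C))$, then $\pi(\D_\C)\pi(\rho(w))x = \epsilon(\rho(w))\pi(\rho(w))\pi(\D_\C)x = 0$, and if $x = \pi(\D_\C)y$, then $\pi(\rho(w))x = \epsilon(\rho(w))\pi(\D_\C)\pi(\rho(w))y \in \im(\pi(\D_\C))$. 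Hence both subspaces are $\tilde W$-stable and $H(X,\C)$ is a $\tilde W$-module.

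For the second claim, I will use that the admissibility condition $\C^\bullet = \C$ together with the already-established $\D^\bullet = \D$ gives $\D_\C^\bullet = \D_\C$, so $\pi(\D_\C)$ is self-adjoint with respect to the positive-definite form $(-,-)_X$. Now suppose $x\in \ker(\pi(\D_\C))\cap \im(\pi(\D_\C))$, and write $x = \pi(\D_\C)y$ for some $y\in X$. Then
\begin{equation*}
(x,x)_X \;=\; (\pi(\D_\C)y,\,x)_X \;=\; (y,\,\pi(\D_\C)x)_X \;=\; (y,\,0)_X \;=\; 0,
\end{equation*}
and positive-definiteness forces $x=0$. Therefore $\ker(\pi(\D_\C))\cap\im(\pi(\D_\C)) = 0$ and the cohomology reduces to the kernel.

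The only mild subtlety, and the step worth stating carefully in the write-up, is the passage from the admissibility condition $\C^\bullet = \C$ (which is an identity in $\bbc\tilde W_-$ via the group anti-involution $w\mapsto w^{-1}$) to the operator identity $\rho(\C)^\bullet = \rho(\C)$ in $\tama$; this is immediate from the definition of $\rho$ and the compatibility of $\bullet = \ast\otimes\ast$ with group inversion on both tensor factors. Beyond that, the argument is formal and there is no genuine obstacle.
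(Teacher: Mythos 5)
Your argument is correct and follows the same route as the paper: the $\epsilon$-commutation of $\D_\C$ with $\rho(\bbc\tilde W_-)$ gives the $\tilde W$-stability of kernel and image, and self-adjointness of $\D_\C$ plus positive definiteness of the form kills the intersection $\ker(\pi\D_\C)\cap\im(\pi\D_\C)$. Your version merely spells out the orthogonality computation and the passage $\C^\bullet=\C\Rightarrow\rho(\C)^\bullet=\rho(\C)$ that the paper leaves implicit.
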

\begin{proof} Since, the element $\D_\C$ $\epsilon$-commutes with $\bbc\tilde{W}_-$ the vector spaces $\ker(\pi\D_\C)$ and $ \im(\pi\D_\C)$ carry a $\tilde{W}$-action. Hence $H(X,\C)$ is a $\tilde{W}$-module. If $X$ is $\bullet$-Hermitian then the image and kernel of $\D_\C$ are orthogonal with respect to $(-,-)_X$ and hence $\ker(\pi\D_\C)\cap \im(\pi\D_\C)={0}$
\end{proof}

\begin{definition}
Let $\C$ be an admissible element and $\zeta_\C:\grZ(\tama)\to \ugZ(\bbc\tilde{W})$ be the homomorphism of Theorem \ref{t:Vogan}. For any irreducible $\tilde{W}$-representation $\tilde{\tau}$, define the homomorphism $\chi_{\tilde{\tau}}: \grZ(\tama)\to\bbc$ via 
\[
\chi_{\tilde{\tau}}(z) = \frac{1}{\dim\tilde{\tau}}\Tr( \tilde{\tau}(\zeta_\C(z)) ),
\]
for any $z$ in $\grZ(\tama)$.
\end{definition}

\begin{theorem}\label{t:CentChar}
Let $\C$ be an admissible element, $\tilde{\tau}$ be an irreducible $\bbc\tilde{W}_-$ representation and $(\pi,X)$ be an $\tama$-module with central character $\chi$. Suppose that
\[
\Hom_{\tilde{W}}(\tilde{\tau},H(X,\C))\neq 0.
\]
Then, $\chi = \chi_{\tilde{\tau}}$.
\end{theorem}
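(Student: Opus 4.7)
The plan is to run the standard Vogan-type argument. By irreducibility of $\tilde{\tau}$, any nonzero element of $\Hom_{\tilde W}(\tilde\tau, H(X,\C))$ is automatically an embedding, so I fix a $\tilde{W}$-submodule $U \subseteq H(X,\C)$ with $U \cong \tilde\tau$ and choose a nonzero class $[v] \in U$ represented by some $v \in \ker(\pi\D_\C)$. The goal is then to show, for every $z \in \grZ(\tama)$, that the scalar $\chi(z)$ by which $z$ acts on $X$ equals the scalar by which $\zeta_\C(z)$ acts on $U \cong \tilde\tau$.

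Fix $z \in \grZ(\tama)$ and apply Theorem \ref{t:Vogan} to obtain $a \in \tama$ with $z = \zeta_\C(z) + \D_\C a + a\D_\C$, where $\zeta_\C(z) \in \ugZ(\bbc\tilde W_-)$ is viewed inside $\tama$ through $\rho$. Applying $\pi$ to both sides and evaluating on $v$, the summand $\pi(a)\pi(\D_\C)v$ vanishes because $v \in \ker(\pi\D_\C)$, giving
\[
\chi(z)\,v \;=\; \pi(z)v \;=\; \pi(\zeta_\C(z))v + \pi(\D_\C)\pi(a)v.
\]
Since $\pi(\D_\C)\pi(a) v \in \im(\pi\D_\C)$, passing to the quotient $H(X,\C)$ yields $\chi(z)[v] = \pi(\zeta_\C(z))[v]$ inside $U$.

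To conclude, observe that $\zeta_\C(z)$ lies in the ungraded centre $\ugZ(\bbc\tilde W_-)$, so Schur's lemma applied to the irreducible $\tilde W$-module $U \cong \tilde\tau$ forces it to act on $U$ by the scalar $\tr(\tilde\tau(\zeta_\C(z)))/\dim\tilde\tau = \chi_{\tilde\tau}(z)$. Combined with the previous step this gives $(\chi(z) - \chi_{\tilde\tau}(z))[v] = 0$, and $[v]\neq 0$ forces $\chi(z) = \chi_{\tilde\tau}(z)$, as desired.

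The only nontrivial check required beyond Theorem \ref{t:Vogan} and Schur's lemma is that $H(X,\C)$ genuinely carries a $\tilde{W}$-module structure on which $\pi(\zeta_\C(z))$ descends. This is Proposition \ref{p:unitarycoh} and rests on the $\epsilon$-centrality of $\C$, which ensures that $\D_\C$ $\epsilon$-commutes with $\rho(\bbc\tilde W_-)$ and hence that $\ker(\pi\D_\C)$ and $\im(\pi\D_\C)$ are both $\tilde{W}$-stable. I do not anticipate any serious obstacle beyond this routine bookkeeping, since the heavy lifting (construction of $\zeta_\C$, and the existence of $a$ in the Vogan identity) has already been done in Theorem \ref{t:Vogan}.
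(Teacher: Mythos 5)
Your proof is correct and is exactly the standard Vogan-type argument that the paper itself invokes by deferring to Theorem 5.11 of \cite{CDM22} and Theorem 4.5 of \cite{BCT12}: apply the identity $z=\zeta_\C(z)+\D_\C a+a\D_\C$ to a representative $v\in\ker(\pi\D_\C)$, pass to $H(X,\C)$, and use Schur's lemma on the irreducible $\tilde W$-constituent. The one point worth making explicit (which your argument implicitly uses) is that $\pi(\zeta_\C(z))v$ itself lies in $\ker(\pi\D_\C)$ by $\tilde W$-stability of the kernel, so that $\pi(\D_\C)\pi(a)v$ lands in $\ker(\pi\D_\C)\cap\im(\pi\D_\C)$ and genuinely dies in the quotient.
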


\noindent The proof of this theorem, given Theorem \ref{t:Vogan}, is identical to \cite[Theorem 5.11]{CDM22} and \cite[Theorem 4.5]{BCT12}. We refer the reader to \cite{CDM22} if they wish to see details. 
We can conclude that if the $\D_\C$ cohomology of $X$ is non-zero then we are able to describe the central character $\chi$ using the $\tilde{W}$ module structure of $X$. 

\section{Admissible elements}

The final part of this paper compiles descriptions of admissible elements and a criterion for non-zero $\D_\C$-cohomology of $\bullet$-unitary modules.

\subsection{The \texorpdfstring{$\epsilon$}{epsilon}-centre of \texorpdfstring{$\bbc\tilde{W}_-$}{W-}}
Before describing the admissible elements we start with a description of the $\epsilon$-centre $Z^\epsilon(\bbc\tilde{W}_-)$. We will break this section into two parts. Depending on whether $d$ is even or odd. Throughout we shall need to explore the notion of conjugacy classes ``splitting'' in different contexts (see \cite{St89}), so we formalize what we need below. It is worth noting that we consider $d \in \bbz$ to be even and odd as well as an independent notion of $\tilde{g} \in \tilde{W}$ even or odd. Recall that $\bbc \tilde{W}$ and $\bbc\tilde{W}_-$  are $\bbz_2$ graded algebras. Homogenous elements $\tilde{g}$ in $\bbc\tilde{W}$ and $\bbc\tilde{W}_-$ are even or odd when their grading is $0$ (respectively $1 \in \bbc_2$).

\begin{definition}
Given $g\in W$, we say that the conjugacy class $C(g)$ of $g$ splits in $\tilde W$ if $p^{-1}(C(g))$ consists of two conjugacy classes in $\tilde{W}$. The conjugacy class $C(g)$ does not split when $p^{-1}(C(g))$ is a single conjugacy class. Furthermore, considering the subgroup $\tilde{W}_{\overline{0}}\subseteq \tilde{W}$, we say that the conjugacy class $C(\tilde{g})$ of $\tilde{g}$ in $\tilde{W}$ splits in $\tilde{W}_{\overline{0}}$ if the conjugacy class breaks into more conjugacy classes in the subgroup $\tilde{W}_{\overline{0}}$.
\end{definition}

Recall that $\epsilon$ is uniformly $1$ when $d$ is odd. 
\begin{proposition}
Let $d$ be odd. Then, the $\epsilon$-centre of $\bbc\tilde{W}_-$ is equal to its ungraded centre. Furthermore, $Z^\epsilon(\bbc\tilde{W}_-)$ is spanned by elements of the form
\[
\{\frac{1-\theta}{2} T_{\tilde{g}} = \sum_{\tilde{w} \in \tilde{W}} \rho(\tilde{w}^{-1} \tilde{g} \tilde{w}) \in \bbc\tilde{W}_- : \tilde{g} \in \tilde{W}\}.
\]\end{proposition}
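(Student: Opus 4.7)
The plan is to first exploit the fact that when $d$ is odd the function $\epsilon$ is identically $1$ on $\bbc\tilde{W}_-$, by the explicit formula stated just before the proposition. Consequently $Z^\epsilon(\bbc\tilde{W}_-) = \ugZ(\bbc\tilde{W}_-)$, and it suffices to show that the ordinary centre of the associative algebra $\bbc\tilde{W}_-$ is spanned by the elements $\tfrac{1-\theta}{2}T_{\tilde{g}}$.

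For the spanning statement I would use the algebra decomposition $\bbc\tilde{W} = \bbc\tilde{W}_+ \oplus \bbc\tilde{W}_-$ from \eqref{e:idemptildeW}, which is a direct sum of two-sided ideals with complementary central idempotents $\tfrac{1\pm\theta}{2}$. Because the two ideals mutually annihilate, any element central in $\bbc\tilde{W}_-$ automatically commutes with all of $\bbc\tilde{W}$, yielding
\[
\ugZ(\bbc\tilde{W}) = \ugZ(\bbc\tilde{W}_+) \oplus \ugZ(\bbc\tilde{W}_-),
\]
so that the projection $a \mapsto \tfrac{1-\theta}{2}a$ restricts to a surjection $\ugZ(\bbc\tilde{W}) \twoheadrightarrow \ugZ(\bbc\tilde{W}_-)$.

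I would then invoke the classical fact that $\ugZ(\bbc\tilde{W})$ is spanned by the conjugacy class sums $T_{\tilde{g}} = \sum_{\tilde{w} \in \tilde{W}} \tilde{w}^{-1}\tilde{g}\tilde{w}$ as $\tilde{g}$ ranges over $\tilde{W}$. Applying the projection immediately produces the family $\tfrac{1-\theta}{2}T_{\tilde{g}}$ as a spanning set for $\ugZ(\bbc\tilde{W}_-)$. Finally, under the isomorphism $\rho(\bbc\tilde{W}) \cong \bbc\tilde{W}_-$ from Proposition \ref{p:rhoiso}, the element $\tfrac{1-\theta}{2}T_{\tilde{g}}$ corresponds to $\sum_{\tilde{w} \in \tilde{W}}\rho(\tilde{w}^{-1}\tilde{g}\tilde{w})$, which gives the stated formula.

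The only point requiring some care is the surjectivity of the projection at the level of centres, but this is routine given the orthogonality of the central idempotents. It is worth noting that the displayed set is a spanning set rather than a basis: from the centrality of $\theta$ one has $T_{\theta\tilde{g}} = \theta T_{\tilde{g}}$, hence $\tfrac{1-\theta}{2}T_{\theta\tilde{g}} = -\tfrac{1-\theta}{2}T_{\tilde{g}}$, and conjugacy classes satisfying $C(\tilde{g}) = C(\theta\tilde{g})$ (the non-splitting classes in the sense introduced in the section) contribute zero. This redundancy is harmless for the stated spanning claim, although it would need to be tracked if one later wanted to extract an actual basis indexed by the splitting $\tilde{W}$-conjugacy classes.
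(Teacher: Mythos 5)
Your proposal is correct and follows essentially the same route as the paper: reduce to the ungraded centre via $\epsilon\equiv 1$ for $d$ odd, recall that $\ugZ(\bbc\tilde W)$ is spanned by conjugacy class sums, and project by $\tfrac{1-\theta}{2}$. The only difference is that you spell out why the projection surjects onto $\ugZ(\bbc\tilde W_-)$ (via the orthogonal central idempotents $\tfrac{1\pm\theta}{2}$), a step the paper asserts without comment, and your closing remark on which $\tfrac{1-\theta}{2}T_{\tilde g}$ vanish matches the paper's final sentence.
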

\begin{proof}
The ungraded centre of the group algebra $\bbc\tilde{W}$ is spanned by conjugacy class sums; elements of the form
\[
T_{\tilde{g}} = \sum_{\tilde{w} \in \tilde{W}} \tilde{w}^{-1} \tilde{g} \tilde{w} \in \bbc\tilde{W}
\]
for all $\tilde{g} \in \tilde{W}$. The ungraded centre of $\bbc\tilde{W}$ projects onto $\ugZ(\bbc\tilde{W}_-)$ and $\frac{1-\theta}{2}T_{\tilde{g}}$ is non-zero exactly when the
elements in $p^{-1}(p(\tilde{g}))=\{\tilde{g}, \theta \tilde{g}\}$ are in different conjugacy classes of $\tilde{W}$.
\end{proof}
For the rest of this section, we assume $d$ is even. Recall that we have $\epsilon(\tilde{w})= (-1)^{|\tilde{w}|}$. To study this case, we need to introduce a new subalgebra of $\bbc\tilde{W}$. 
\begin{definition}
Let us the define the $\theta$-centre of $\bbc\tilde{W}$,
\[
Z^\theta(\bbc\tilde{W}) = \{ a \in \bbc\tilde{W} |  a \tilde{w}= \theta^{|\tilde{w}|}\tilde{w}a \text{ for all } \tilde{w} \in \tilde{W}\},
\]
 where $|\tilde{w}|$ is the parity of $\tilde{w}$.
\end{definition}

\begin{proposition}
The $\theta$-centre of $\bbc\tilde{W}$ is spanned by elements of the form
\[
T^\theta_{\tilde{g}} =\sum_{\tilde{w} \in \tilde{W}}\theta^{|\tilde{w}|} \tilde{w}^{-1} g \tilde{w}   =  \sum_{\tilde{w} \in \tilde{W}_{\overline{0}}} \tilde{w}^{-1} \tilde{g} \tilde{w}+ \theta  \sum_{\tilde{w} \in \tilde{W}_{\overline{1}}} \tilde{w}^{-1} \tilde{g} \tilde{w}
\]
for all $\tilde{g} \in \tilde{W}$.
\end{proposition}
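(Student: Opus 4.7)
The plan is to realise $Z^\theta(\bbc\tilde{W})$ as the space of invariants for a natural twisted conjugation action of $\tilde{W}$ on itself, and then identify each $T^\theta_{\tilde{g}}$ as a nonzero scalar multiple of an orbit sum for that action.

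First I would define the twisted action $\tilde{v}\star\tilde{g}:=\theta^{|\tilde{v}|}\tilde{v}\tilde{g}\tilde{v}^{-1}$ on the set $\tilde{W}$ and verify that it is a genuine left action; this is a short calculation relying on the centrality of $\theta$ and on the fact that the parity function $|\cdot|\colon \tilde{W}\to\bbz_2$ is a group homomorphism. Extending $\star$ linearly to $\bbc\tilde{W}$ and writing any $a=\sum_{\tilde{g}}c_{\tilde{g}}\tilde{g}$, the invariance relation $\tilde{v}\star a = a$ rearranges to $\tilde{v}^{-1}a\tilde{v}=\theta^{|\tilde{v}|}a$, and centrality of $\theta$ converts this to the defining condition $a\tilde{v}=\theta^{|\tilde{v}|}\tilde{v}a$ of the $\theta$-centre. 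Hence the fixed point space of $\star$ coincides with $Z^\theta(\bbc\tilde{W})$.

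The key observation is that, after the parity-preserving reindexing $\tilde{w}=\tilde{v}^{-1}$, one obtains
\[
T^\theta_{\tilde{g}} \;=\; \sum_{\tilde{v}\in\tilde{W}}\tilde{v}\star\tilde{g} \;=\; |\mathrm{Stab}^\star(\tilde{g})|\sum_{\tilde{h}\in\tilde{W}\star\tilde{g}}\tilde{h},
\]
where the second equality is the orbit--stabiliser theorem for the $\star$-action. Since orbit sums form a $\bbc$-basis for the invariants of any linear permutation action, and each $|\mathrm{Stab}^\star(\tilde g)|$ is a positive integer, this shows that the elements $T^\theta_{\tilde{g}}$ span $Z^\theta(\bbc\tilde{W})$. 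The second equality in the statement of the proposition is then immediate: one splits the sum over $\tilde{w}\in\tilde{W}$ according to the parity of $\tilde{w}$ and uses $\theta^{\overline{0}}=1$ and $\theta^{\overline{1}}=\theta$.

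The only conceptual step is recognising the correct twisted action; once this is in place the rest is an exact analogue of the classical proof that conjugacy class sums span the centre of a finite group algebra.
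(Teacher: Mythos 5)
Your proof is correct. The paper's own argument is much terser: it asserts that each $T^\theta_{\tilde g}$ lies in $Z^\theta(\bbc\tilde W)$ and then runs a greedy elimination --- given a $\theta$-central $a$ with nonzero coefficient at some $\tilde g$, subtract a suitable multiple of $T^\theta_{\tilde g}$ to remove that coefficient, and iterate. Your reformulation via the twisted conjugation action $\tilde v\star\tilde g=\theta^{|\tilde v|}\tilde v\tilde g\tilde v^{-1}$ packages the same underlying idea more robustly: identifying $Z^\theta(\bbc\tilde W)$ as the fixed-point space of a permutation action of $\tilde W$ on its own basis turns the spanning claim into an instance of the standard orbit-sum basis for invariants, and the orbit--stabiliser count exhibits $T^\theta_{\tilde g}$ as a positive integer multiple of the orbit sum through $\tilde g$. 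This buys you two points that the paper's sketch leaves implicit: first, that the coefficient of $\tilde g$ in $T^\theta_{\tilde g}$ is a nonzero (positive integer) scalar, so the elimination step is actually available; and second, that the elimination terminates, since a $\theta$-central element has constant coefficients along $\star$-orbits and subtracting the orbit sum clears an entire orbit from the support at once rather than reintroducing new terms. Your argument also gives, for free, that the $T^\theta_{\tilde g}$ indexed by orbit representatives are linearly independent (disjoint supports), which the proposition does not assert but which is consistent with how these elements are used later in the paper. All the small verifications you flag --- that $\star$ is a left action because $\theta$ is central and parity is a homomorphism, and that the reindexing $\tilde w=\tilde v^{-1}$ preserves parity --- do go through.
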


\begin{proof}
Given any $\tilde{g}$, then $T^\theta_{\tilde{g}}$ is in $Z^\theta( \bbc\tilde{W})$. For any $a \in Z^\theta (\bbc\tilde{W})$ that has a non-zero coefficient of $\tilde{g}$, then there exists a non-zero scalar $x$ such that $a - xT^\theta_{\tilde{g}}$ is $\theta$-central with no coefficient of $\tilde{g}$. Continuing the process shows that $a$ is in the space spanned by $T^\theta_{\tilde{g}}$. 
\end{proof}

\begin{theorem}
When $d$ is even then $\epsilon(\tilde{w}) = (-1)^{|\tilde{w}|}$ and the $\epsilon$-centre of $\bbc\tilde{W}_-$ is the projection of the $\theta$-centre of $\bbc\tilde{W}$:
\[
Z^\epsilon(\bbc\tilde{W}_-) = \rho(Z^\theta( \bbc\tilde{W})).
\]
\end{theorem}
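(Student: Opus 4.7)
The plan is to prove the identity $Z^\epsilon(\bbc\tilde{W}_-) = \rho(Z^\theta(\bbc\tilde{W}))$ by a double inclusion, making essential use of the direct-sum decomposition~(\ref{e:idemptildeW}) of $\bbc\tilde{W}$ into two-sided ideals $\bbc\tilde{W}_\pm = \tfrac{1\pm\theta}{2}\bbc\tilde{W}$, on which $\rho$ vanishes on the $+$ component and restricts to an isomorphism onto $\rho(\bbc\tilde{W})\cong\bbc\tilde{W}_-$ (Proposition~\ref{p:rhoiso}). First I would justify the preliminary claim that $\epsilon(\tilde{w}) = (-1)^{|\tilde{w}|}$ in even dimension: since $\tama$ graded-centralises $U(\fosp(1|2))$ and the Scasimir $\scasi$ is an even element, $\rho(\tilde{w})$ ungraded-commutes with $\scasi$. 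Thus the sign arising in $\D\,\rho(\tilde{w}) = \epsilon(\tilde{w})\rho(\tilde{w})\D$ with $\D = \Gamma\scasi$ is entirely determined by how $\Gamma$ passes through the Clifford part of $\rho(\tilde{w})$. Using $\Gamma e_j = (-1)^{d-1}e_j\Gamma$ and the fact that the Clifford component of $\rho(\tilde{w})$ has Clifford degree $|\tilde{w}|$, we get $\Gamma\rho(\tilde{w}) = (-1)^{(d-1)|\tilde{w}|}\rho(\tilde{w})\Gamma$, which reduces to $(-1)^{|\tilde{w}|}$ precisely when $d$ is even.

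For the inclusion $\rho(Z^\theta(\bbc\tilde{W}))\subseteq Z^\epsilon(\bbc\tilde{W}_-)$, I would take $a\in Z^\theta(\bbc\tilde{W})$ and apply the algebra homomorphism $\rho$ to the defining relation $a\tilde{w} = \theta^{|\tilde{w}|}\tilde{w}a$. Since $\rho(\theta) = 1\otimes(-1) = -1$ (as recorded in the proof of Proposition~\ref{p:rhoiso}), this yields $\rho(a)\rho(\tilde{w}) = (-1)^{|\tilde{w}|}\rho(\tilde{w})\rho(a) = \epsilon(\tilde{w})\rho(\tilde{w})\rho(a)$. Because $\bbc\tilde{W}_-$ is generated as an algebra by the $\rho(\tilde{w})$ and $\epsilon$ is multiplicative on homogeneous elements, the condition extends to all of $\bbc\tilde{W}_-$.

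For the reverse inclusion, I would show that any $\bar{a} \in Z^\epsilon(\bbc\tilde{W}_-)$, viewed canonically as an element of $\bbc\tilde{W}$ via the ideal embedding $\bbc\tilde{W}_-\hookrightarrow\bbc\tilde{W}$, already lies in $Z^\theta(\bbc\tilde{W})$. Testing the $\epsilon$-centrality against the image $\tfrac{1-\theta}{2}\tilde{w}\in\bbc\tilde{W}_-$ of $\tilde{w}$ and using that $\tfrac{1-\theta}{2}$ is central in $\bbc\tilde{W}$, I would deduce
\[
\tfrac{1-\theta}{2}\bigl(\bar{a}\tilde{w} - \theta^{|\tilde{w}|}\tilde{w}\bar{a}\bigr) = 0,
\]
i.e.\ $\bar{a}\tilde{w} - \theta^{|\tilde{w}|}\tilde{w}\bar{a}\in\bbc\tilde{W}_+$. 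At the same time, $\bar{a}\in\bbc\tilde{W}_-$ and $\bbc\tilde{W}_-$ is a two-sided ideal, so this difference also lies in $\bbc\tilde{W}_-$; hence it belongs to $\bbc\tilde{W}_+\cap\bbc\tilde{W}_-=\{0\}$ and must vanish. This exhibits $\bar{a}$ as an element of $Z^\theta(\bbc\tilde{W})$ whose $\rho$-image is $\bar{a}$ itself.

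The subtle point, which I expect to be the main obstacle, is the choice of lift in the reverse inclusion: an arbitrary preimage of $\bar{a}$ under $\rho$ need not satisfy the $Z^\theta$-relation (it might differ from the canonical lift by a non-$\theta$-central element of $\bbc\tilde{W}_+$). The argument hinges on the existence of the canonical lift sitting in the ideal $\bbc\tilde{W}_-$, which works precisely because the complementary ideals have trivial intersection. Once this point is handled, the rest of the proof is an essentially mechanical translation between the $\theta$-twisted commutation in $\bbc\tilde{W}$ and the $\epsilon$-twisted commutation in $\bbc\tilde{W}_-$, with the centrality of the idempotent $\tfrac{1-\theta}{2}$ providing the bridge.
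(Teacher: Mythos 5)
Your proof is correct and follows essentially the same route as the paper's: a double inclusion using the idempotent decomposition $\bbc\tilde{W}=\bbc\tilde{W}_+\oplus\bbc\tilde{W}_-$, pushing the $\theta$-twisted commutator through $\rho$ for one direction, and for the other using that the twisted commutator lands in $\bbc\tilde{W}_+\cap\bbc\tilde{W}_-=\{0\}$. Your choice of the canonical lift inside the ideal $\bbc\tilde{W}_-$ is a slight streamlining of the paper's construction $b=\theta_+a'+\theta_-a$, but the underlying argument is the same.
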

\begin{proof}
Recall from (\ref{e:idemptildeW}) that the algebra $\bbc\tilde{W}$ comes equipped with the idempotents $\theta_{\pm} := \tfrac{1}{2}(1\pm \theta)$ and that the homomorphism $\rho$ of (\ref{e:rho}) satisfies $\rho(\theta) = 1\otimes -1$. We will write here $\rho(\theta) = -1$. Furthermore,
let $[-,-]_\theta$ denote the product $[a,b] = ab - \theta^{|b|}ba$, defined for any $\bbz_2$-homogeneous elements $a,b\in \bbc\tilde{W}$. Note that $a\in Z^\theta(\bbc\tilde W)$ if and only of $[a,\tilde w]_\theta = 0$ for all $\tilde w\in \tilde W$. All that said, we shall show, by double inclusion, that
\[
\rho(Z^\theta(\bbc\tilde W)) = Z^\epsilon(\rho(\bbc\tilde W)).
\]

Given $a\in Z^\theta(\bbc\tilde W)$, note that for any $\tilde w\in \tilde{W}$
\[
0 = \rho([a,\tilde w]_\theta) = \rho(a)\rho(\tilde w) - \rho(\theta)^{|\tilde w|} \rho(\tilde w)\rho(a)
= \rho(a)\rho(\tilde w) - (-1)^{|\tilde w|} \rho(\tilde w)\rho(a),
\]
showing that $\rho(a)$ is $\epsilon$-central. Conversely, let $\rho(a) \in Z^\epsilon(\rho(\bbc\tilde W))$. Note that this implies that, for all $\tilde w \in \tilde W$, we have
\[
0 = \rho(a)\rho(\tilde w) - (-1)^{|\tilde w|}\rho(\tilde w)\rho(a) = \rho([a,\tilde w]_\theta) = \rho([\theta_-a,\theta_-\tilde w]_\theta),
\]
since $\rho(\theta_-)=1$. It follows that $[\theta_-a,\theta_-\tilde w]_\theta$ is in $\theta_+(\bbc\tilde W)$, the kernel of $\rho$, and hence, we must have $[\theta_-a,\theta_-\tilde w]_\theta\in \bbc\tilde W_+\cap \bbc\tilde W_- = \{0\}$, for any $\tilde w\in\tilde W$. Now choose any element $a'\in Z^\theta(\bbc \tilde W)$ and define $b = \theta_+a' + \theta_-a \in \bbc\tilde W$. Then, for any $\tilde w\in \tilde W$, we have
\begin{align*}
 [b,\tilde w]_\theta &=  [\theta_+a' + \theta_-a,\theta_+\tilde w + \theta_-\tilde w]_\theta\\
 &=  [\theta_+a',\theta_+\tilde w]_\theta+[\theta_-a,\theta_-\tilde w]_\theta\\
 &= 0 ,
\end{align*}
where we used $\theta_+\theta_- = \theta_-\theta_+=0$, $[\theta_+a',\theta_+\tilde w]_\theta = \theta_+[a',\tilde w]_\theta =0$ (as $a'\in Z^\theta(\bbc \tilde W)$) and also $[\theta_-a,\theta_-\tilde w]_\theta = 0$ (from the assumption we made). Hence, $b\in Z^\theta(\bbc\tilde W)$ and $\rho(b) = \rho(a) \in Z^\epsilon(\rho(\bbc\tilde W))$, finishing the proof.
\end{proof}

\noindent In particular (when $d$ is even), the $\epsilon$-centre of $\bbc\tilde{W}_-$ is spanned by elements of the form 
\[
T^{(-1)}_{\tilde{g}} = \frac{1-\theta}{2}T^\theta_{\tilde{g}} = \rho(T^\theta_{\tilde{g}})= \sum_{\tilde{w} \in \tilde{W}}(-1)^{|\tilde{w}|} \rho(\tilde{w}^{-1} \tilde{g} \tilde{w}) \in \bbc\tilde{W}_-.
\]
These elements are signed sums of conjugacy class elements and are homogenous. We now describe precise conditions when these spanning elements are nonzero.

\begin{lemma} \label{l:oddadm1}
If $\tilde{g}$ is odd then $T^{(-1)}_{\tilde{g}} =0$.
\end{lemma}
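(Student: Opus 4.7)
The plan is to exploit a cancellation in the defining sum
\[
T^{(-1)}_{\tilde g} \;=\; \sum_{\tilde w\in\tilde W}(-1)^{|\tilde w|}\rho(\tilde w^{-1}\tilde g\tilde w)
\]
by reindexing via left multiplication by $\tilde g$. Concretely, I would introduce the bijection $\phi\colon \tilde W\to \tilde W$, $\phi(\tilde w)=\tilde g\tilde w$, and check two things. First, because the $\bbz_2$-grading on $\bbc\tilde W$ is a group homomorphism to $\bbz_2$, one has $|\phi(\tilde w)|=|\tilde g|+|\tilde w|\pmod 2$; with $\tilde g$ odd this gives $(-1)^{|\phi(\tilde w)|}=-(-1)^{|\tilde w|}$. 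Second, a direct computation shows
\[
\phi(\tilde w)^{-1}\,\tilde g\,\phi(\tilde w) \;=\; \tilde w^{-1}\tilde g^{-1}\,\tilde g\,\tilde g\tilde w \;=\; \tilde w^{-1}\tilde g\tilde w,
\]
so the conjugate of $\tilde g$ is unchanged by the substitution.

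With these two observations, reindexing the sum by setting $\tilde u=\phi(\tilde w)$ (so $\tilde u$ again ranges over all of $\tilde W$) yields
\[
T^{(-1)}_{\tilde g} \;=\; \sum_{\tilde u\in\tilde W}(-1)^{|\tilde u|+1}\rho(\tilde u^{-1}\tilde g\tilde u) \;=\; -\,T^{(-1)}_{\tilde g},
\]
and hence $2\,T^{(-1)}_{\tilde g}=0$, which forces $T^{(-1)}_{\tilde g}=0$ since we work over $\bbc$.

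There is no serious obstacle here: the proof is a one-line pairing argument using that $\tilde g$ itself is the element whose left multiplication flips the parity sign while preserving the conjugacy orbit of $\tilde g$. The only point to be careful about is that the map $\phi$ is merely a bijection of $\tilde W$ (not an involution unless $\tilde g^2=1$), but this is irrelevant since we only need to reparametrise a finite sum.
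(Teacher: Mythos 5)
Your proof is correct and rests on the same key mechanism as the paper's: the element $\tilde g$ itself, being odd, induces a sign flip on the class sum $T^{(-1)}_{\tilde g}$ (the paper realises this by conjugating with $\rho(\tilde g)$ and isolating the coefficient of $\rho(\tilde g)$, while you reindex the defining sum by left translation, which yields $T^{(-1)}_{\tilde g}=-T^{(-1)}_{\tilde g}$ directly). Your version is slightly more self-contained since it avoids the coefficient-extraction step, but the underlying idea is identical.
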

\begin{proof}
 If $\tilde{g}$ is taken to be odd then $\rho(\tilde{g})^{-1} T^{(-1)}_{\tilde{g}} \rho(\tilde{g}) = - T^{(-1)}_{\tilde{g}}$. Isolating the $\rho(\tilde{g})$ coefficient in $T^{(-1)}_{\tilde{g}}$ shows that it must be zero and hence $T^{(-1)}_{\tilde{g}}$ is zero for every odd element $\tilde{g}$.
\end{proof}

\begin{lemma} \label{l:splitsplit}
Suppose that $\tilde{g} \in \tilde{W}$ is even, let $C(\tilde{g}) = \{ \tilde{w }\in \tilde{W}: \tilde{w} = \tilde{h}^{-1}\tilde{g}\tilde{h}, \tilde{h} \in \tilde{W}\}$, then the element $T^{(-1)}_{\tilde{g}}$ is non zero if and only if the conjugacy class $C(\tilde{g})$ splits into two conjugacy classes in $\tilde{W}_{\overline{0}}$.

\end{lemma}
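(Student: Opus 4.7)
The plan is to split the sum defining $T^{(-1)}_{\tilde g}$ according to the parity of $\tilde w$, and then identify each piece with a $\tilde W_{\bar 0}$-orbit sum inside the $\tilde W$-conjugacy class $C(\tilde g)$. Since $\tilde g$ is even, all of $C(\tilde g)$ lies in $\tilde W_{\bar 0}$, and conjugation by $\tilde W_{\bar 0}$ partitions $C(\tilde g)$ into orbits. Concretely, fix any $\tilde w_1 \in \tilde W_{\bar 1}$ and set $\tilde g' = \tilde w_1^{-1}\tilde g\tilde w_1 \in C(\tilde g)$; using the coset decomposition $\tilde W_{\bar 1} = \tilde w_1\tilde W_{\bar 0}$ yields
\[
T^{(-1)}_{\tilde g} = \sum_{\tilde w\in\tilde W_{\bar 0}} \rho(\tilde w^{-1}\tilde g\tilde w) \;-\; \sum_{\tilde w\in\tilde W_{\bar 0}} \rho(\tilde w^{-1}\tilde g'\tilde w).
\]
Each summand on the right-hand side is a $\tilde W_{\bar 0}$-conjugation sum and, by the orbit--stabiliser theorem, equals $|Z_{\tilde W_{\bar 0}}(\tilde g)|\cdot\rho(\Sigma_O)$, where $O$ denotes the corresponding $\tilde W_{\bar 0}$-orbit and $\Sigma_O \colonequals \sum_{\tilde h \in O}\tilde h$.

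Next I would invoke the standard index-two dichotomy for orbits: $C(\tilde g)$ either constitutes a single $\tilde W_{\bar 0}$-orbit (``no split'') or decomposes as a disjoint union of two $\tilde W_{\bar 0}$-orbits interchanged by conjugation by any element of $\tilde W_{\bar 1}$ (``splits''). In the no-split case both right-hand sums coincide, so $T^{(-1)}_{\tilde g} = 0$, which establishes one direction of the equivalence. In the split case, letting $O_0$ denote the $\tilde W_{\bar 0}$-orbit of $\tilde g$ and $O_1$ that of $\tilde g'$, the computation yields
\[
T^{(-1)}_{\tilde g} = |Z_{\tilde W_{\bar 0}}(\tilde g)|\bigl(\rho(\Sigma_{O_0}) - \rho(\Sigma_{O_1})\bigr).
\]

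To finish, I would verify that this expression is non-zero in $\bbc\tilde W_-$. Recalling from Proposition~\ref{p:rhoiso} that $\ker\rho = \bbc\tilde W_+$ coincides with the $\theta$-invariant elements of $\bbc\tilde W$, the task reduces to showing that $\Sigma_{O_0} - \Sigma_{O_1}$ is not $\theta$-invariant. The main obstacle is precisely this last step: because $\theta$ is central in $\tilde W$, left multiplication by $\theta$ permutes the $\tilde W_{\bar 0}$-orbits within $C(\tilde g)$, and one must rule out the degenerate possibility that $\theta$ preserves each of $O_0$ and $O_1$ setwise (which would render $\Sigma_{O_0} - \Sigma_{O_1}$ a $\theta$-fixed element and force $T^{(-1)}_{\tilde g}$ to vanish). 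Excluding this scenario requires combining the centraliser characterisation of the splitting condition (namely $Z_{\tilde W}(\tilde g) \subseteq \tilde W_{\bar 0}$) with the structural properties of the Pin-cover $\tilde W$ and the position of $\theta$ inside the centraliser.
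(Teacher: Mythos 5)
Your reduction is the same as the paper's: split the defining sum of $T^{(-1)}_{\tilde g}$ over $\tilde{W}_{\overline{0}}$ and its nontrivial coset, identify each half with a $\tilde{W}_{\overline{0}}$-orbit sum in $C(\tilde g)$, and observe that in the non-split case the two halves coincide, giving $T^{(-1)}_{\tilde g}=0$. That direction of the equivalence is complete and matches the paper's third case. The problem is the converse, which you explicitly leave open: you correctly reduce ``splits $\Rightarrow$ nonzero'' to showing that $\Sigma_{O_0}-\Sigma_{O_1}$ does not lie in $\ker\rho=\tfrac{1+\theta}{2}\bbc\tilde W$, and correctly isolate the only failure mode ($\theta O_0=O_0$ and $\theta O_1=O_1$), but you do not exclude it. As written, the ``only if'' half of the lemma is therefore unproved; this is a genuine gap, not a cosmetic one.

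The paper closes this by one further dichotomy, on whether the class $C(g)$ of $g=p(\tilde g)$ splits in $\tilde W$. If $C(g)$ splits in $\tilde W$, then $C(\theta\tilde g)=\theta C(\tilde g)$ is a $\tilde W$-conjugacy class disjoint from $C(\tilde g)$; since $O_0\cup O_1\subseteq C(\tilde g)$ while $\theta(O_0\cup O_1)\subseteq C(\theta\tilde g)$, no element of $O_0\cup O_1$ is a $\theta$-multiple of another, so the elements $\rho(\tilde h)$, $\tilde h\in O_0\cup O_1$, are linearly independent and $\rho(\Sigma_{O_0})-\rho(\Sigma_{O_1})\neq 0$ --- your degenerate scenario is excluded for free in this sub-case. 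If $C(g)$ does not split, so that $\theta\tilde g\in C(\tilde g)$, the paper asserts that $\theta$ interchanges the two $\tilde{W}_{\overline{0}}$-classes, $C^{\pm}(\tilde g)=\theta C^{\mp}(\tilde g)$, whence $T^{(-1)}_{\tilde g}$ is (a positive multiple of) $2\sum_{\tilde h\in O_0}\rho(\tilde h)\neq 0$. Note that this assertion is exactly the statement you could not prove, specialised to the non-split sub-case: the set $\{\tilde w:\tilde w^{-1}\tilde g\tilde w=\theta\tilde g\}$ is a single coset of $Z_{\tilde W}(\tilde g)$, which under the splitting hypothesis is contained in $\tilde{W}_{\overline{0}}$, so that coset is entirely even or entirely odd, and the claim is that it is odd. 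The paper states this without further argument, so to complete your proof you should at minimum reproduce the dichotomy on $C(g)$ (which genuinely disposes of one sub-case) and then either prove or cite the parity claim in the remaining one.
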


\begin{proof}
First assume that $C(\tilde{g})$ splits in $\tilde{W}_{\overline{0}}$ and let $g = p(\tilde{g})$.
We divide the proof of this implication depending on whether or not $C(g)$ splits in $\tilde{W}$. First assume that $C(g)$ does not split in $\tilde{W}$ but $C(\tilde{g})$ splits in $\tilde{W}_{\overline{0}}$, then the conjugacy class of $\tilde{g}$ in $\tilde{W}$ is $C(\tilde{g}) = C^+(\tilde{g}) \cup C^-(\tilde{g})$, where $C^\pm(\tilde{g})$ are conjugacy classes in $\tilde{W}_{\overline{0}}$ and $C^\pm(\tilde{g})= \theta C^\mp(\tilde{g})$. Then 
\[
T^\theta_{\tilde{g}} =\sum_{\tilde{h} \in C^+(\tilde{g})}\tilde{h} + \theta\sum_{\tilde{h} \in C^-(\tilde{g})} \tilde{h}= 2\sum_{\tilde{h} \in C^+(\tilde{g})}\tilde{h}
\]
and the projection $T^{(-1)}_{\tilde{g}} = 2\sum_{\tilde{h} \in C^+(\tilde{g})}\rho(\tilde{h})$ is non-zero in $Z^\epsilon(\bbc\tilde{W}_-)$.
 
 Now suppose that $C(g)$ splits in $\tilde{W}$ into $C(\tilde{g})$ and $C(\theta\tilde{g}) = \theta C(\tilde{g})$, where $\{\tilde{g},\theta\tilde{g}\}= p^{-1}(g)$.  Then, by the assumptions of the lemma each of these conjugacy classes splits into two conjugacy classes $C^\pm(\tilde{g})$ and $C^\pm(\theta\tilde{g})$ in $\tilde{W}_{\overline{0}}$. Then $T^\theta_{\tilde{g}} = \sum_{\tilde{h} \in C^+(\tilde{g})}\tilde{h}  + \theta\sum_{\tilde{h} \in C^+(\theta\tilde{g})}\tilde{h} $ and 
 \[
 T^{(-1)}_{\tilde{g}} =  \sum_{\tilde{h} \in C^+(\tilde{g})}\rho(\tilde{h}) - \sum_{\tilde{h} \in  C^+(\theta\tilde{g})}\rho(\tilde{h}),
 \] 
 which is non-zero since $\rho(C^+(\tilde{g}))$ and $\rho(C^+(\theta\tilde{g}))$ are sums over linearly independent elements in $\bbc\tilde{W}_-$.

Suppose now, that $C(\tilde{g})$ in $\tilde{W}$ remains a conjugacy class in $\tilde{W}_{\overline{0}}$ then $T^\theta_{\tilde{g}} = \sum_{\tilde{w} \in \tilde{W}_{\overline{0}}} \tilde{w}^{-1} \tilde{g} \tilde{w}+\theta  \sum_{\tilde{w} \in \tilde{W}_{\overline{1}}} \tilde{w}^{-1} \tilde{g} \tilde{w}$ and 
 \[
 T^\theta_{\tilde{g}} = (1+\theta) \sum_{\tilde{w} \in \tilde{W}_{\overline{0}}} \tilde{w}^{-1} \tilde{g} \tilde{w}.
 \]
 Hence $T^{(-1)}_{\tilde{g}} = \frac{1-\theta}{2}T^\theta_{\tilde{g}} = 0$  because $(1+\theta) (1-\theta) = 0$. 
\end{proof}

\subsection{The real subspace of admissible elements}
We have now described the $\epsilon$-centre of $\bbc\tilde{W}_-$. The following two propositions (split into $d$ even and $d$ odd), describe the set $\fA$ of admissible elements (Definition \ref{d:Dirac_C}). That is, the elements that are $\epsilon$-central and self-adjoint.



\begin{proposition}\label{l:oddadm}
Let $d$ be odd. In this case $\epsilon(w) = 1$ for all $w$. Then $\fA =  \ugZ_{\overline{0}} (\bbr\tilde{W}_-) + i  \ugZ_{\overline{1}}(\bbr\tilde{W}_-)$.

\end{proposition}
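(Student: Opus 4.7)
Since the dimension $d$ is odd, the function $\epsilon$ is identically $1$ on all homogeneous elements of $\bbc\tilde{W}_-$, so the $\epsilon$-centre coincides with the ungraded centre $\ugZ(\bbc\tilde{W}_-)$, and admissibility reduces to asking that $\C$ be homogeneous, central, and fixed under the conjugate-linear anti-involution $\bullet$ (which is conjugate-linear because the defining property of $\bullet$-Hermitian forms forces it to be so).

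My plan is to first determine how $\bullet$ acts on the image $\rho(\bbc\tilde{W})$. Using $p(\tilde{w})^\ast = p(\tilde{w})^{-1}$ in $\rca_{t,c}$ together with the Clifford identity $\tilde{w}^\ast = (-1)^{|\tilde{w}|}\tilde{w}^{-1}$ (which holds because a Pin-representative of $\tilde{w}$ is a product of $|\tilde{w}|$ unit vectors, whose transpose equals the inverse, picking up the sign $(-1)^{|\tilde{w}|}$ from the definition of $\ast$ on $\clif$), I obtain the key formula
\begin{equation*}
\rho(\tilde{w})^\bullet \;=\; (-1)^{|\tilde{w}|}\,\rho(\tilde{w}^{-1}).
\end{equation*}

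Because $\bullet$ preserves the $\bbz_2$-grading, any admissible element splits as $\C = \C_{\overline{0}} + \C_{\overline{1}}$ with each summand admissible, so I may treat the two parities separately. I then expand each component in the basis of $\ugZ(\bbc\tilde{W}_-)$ given by the (surviving) conjugacy class sums $T_C = \sum_{\tilde{w}\in C}\rho(\tilde{w})$. The formula above yields $T_C^\bullet = (-1)^{|C|}\,T_{C^{-1}}$, so the condition $\C^\bullet = \C$ translates into the coefficient identity
\begin{equation*}
c_C \;=\; (-1)^{|C|}\,\overline{c_{C^{-1}}}\qquad\text{for every class } C.
\end{equation*}
In the even case ($|C|=0$) this forces $c_C = \overline{c_{C^{-1}}}$, which (after a consistent choice of inverse-pairing of classes) is exactly the condition that $\C_{\overline{0}}$ lies in $\ugZ_{\overline{0}}(\bbr\tilde{W}_-)$. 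In the odd case ($|C|=1$) the extra sign forces $c_C = -\overline{c_{C^{-1}}}$, i.e.\ the coefficients lie in $i\bbr$ up to the pairing, producing exactly $i\,\ugZ_{\overline{1}}(\bbr\tilde{W}_-)$.

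The step I expect to be the main obstacle is the bookkeeping for non-ambivalent classes, where $C \neq C^{-1}$: one must pair such a class with its inverse class and verify that the real subspace of admissible pairs $(c_C,c_{C^{-1}})$ satisfying the above conjugation relation coincides, as a real subspace of $\bbc T_C \oplus \bbc T_{C^{-1}}$, with the free real span $\bbr T_C + \bbr T_{C^{-1}}$ inside $\ugZ_{\overline{0}}(\bbr\tilde{W}_-)$ (respectively its $i$-multiple in the odd case). Here I will use that every element of the real reflection group $W$ is conjugate to its inverse, lifting this property to compatible inverse-pairings in $\tilde{W}$, so that the two real subspaces match class-by-class. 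Once this is verified, double inclusion gives the desired equality.
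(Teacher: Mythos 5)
Your reduction is correct up to the step you yourself flag as the main obstacle, and up to that point it is arguably cleaner than the paper's own argument: the formula $\rho(\tilde{w})^\bullet = (-1)^{|\tilde{w}|}\rho(\tilde{w}^{-1})$ is right, splitting into parities is right, and working directly with the class sums $T_C$ spanning $\ugZ(\bbc\tilde{W}_-)$ is more to the point than the paper's monomial basis of all of $\bbc\tilde{W}_-$ (the paper's proof hides the same difficulty in the assertion that $B_{\overline{0}}\cup iB_{\overline{1}}$ being $\bullet$-stable \emph{as a set} forces the self-adjoint elements to be its real span). The coefficient condition $c_C=(-1)^{|C|}\overline{c_{C^{-1}}}$ is also correct, but you must remember that in $\bbc\tilde{W}_-$ one has $T_{\theta C}=-T_C$, so even when $C$ and $C^{-1}$ lie over the same class of $W$ you only get $T_{C^{-1}}=\pm T_C$, with a sign depending on whether $C^{-1}=C$ or $C^{-1}=\theta C$.

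This is exactly where your proposed fix breaks down. Strong reality of $W$ gives $g\sim g^{-1}$, but a lift $\tilde{h}$ of a conjugating element only satisfies $\tilde{h}\tilde{g}\tilde{h}^{-1}\in\{\tilde{g}^{-1},\theta\tilde{g}^{-1}\}$, and the second alternative does occur for split classes. Take $W=S_4$ on its reflection representation ($d=3$), $a=\gamma(\hat\alpha_1)$, $b=\gamma(\hat\alpha_2)$, $c=\gamma(\hat\alpha_3)$, and $\tilde{g}=abc$ a lift of the $4$-cycle $s_1s_2s_3$. A direct Clifford computation gives $\tilde{g}^4=\theta$ and, for the lift $\tilde{h}=aba$ of the transposition $(13)$, $\tilde{h}\tilde{g}\tilde{h}^{-1}=-cba=\theta\tilde{g}^{-1}$. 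Since $\Cent_{\tilde{W}}(\tilde{g})=\langle\tilde{g}\rangle$, every element conjugating $\tilde{g}$ into $p^{-1}(g^{-1})$ sends it to $\theta\tilde{g}^{-1}$, so $C(\tilde{g})^{-1}=\theta C(\tilde{g})\neq C(\tilde{g})$ even though the class splits (here $\tilde{W}\cong GL(2,\bbf_3)$ and the two classes of order-$8$ elements are mutually inverse). For such a class $T_{C^{-1}}=-T_C$, the condition becomes $c_C=-(-1)^{|C|}\overline{c_C}$, and the parity of the coefficient is the opposite of what you (and the stated proposition) need: $T_C$ itself is self-adjoint although $C$ is odd, so it is an admissible element lying in $\ugZ_{\overline{1}}(\bbr\tilde{W}_-)$ rather than in $i\,\ugZ_{\overline{1}}(\bbr\tilde{W}_-)$. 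The class-by-class matching therefore fails, and closing the argument would require the genuinely additional input that every split conjugacy class of $\tilde{W}$ is closed under inversion — which, as this example shows, is not a consequence of strong reality of $W$. The correct description one actually obtains from your coefficient analysis is the real span of $\{T_C+T_C^\bullet\}\cup\{i(T_C-T_C^\bullet)\}$, which coincides with the stated right-hand side only on the inverse-closed split classes.
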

\begin{proof}
If $d$ is odd then the set of admissible elements $\fA$ is the intersection of $\ugZ(\bbc\tilde{W}_-)$ and the self adjoint elements in $\bbc\tilde{W}_-$. Fix a basis $B = \{\prod s_\alpha\otimes \alpha/{|\alpha|}: w \in W, w = \prod s_\alpha \text{ is a fixed word for } w \textup{ with }\alpha \textup{ simple}\}$ of $\rho(\bbc\tilde W) \cong \bbc\tilde W_-$. Note that $B$ has size $|W|$. Since $(\alpha/{|\alpha|})^\bullet = -\alpha/{|\alpha|}$ and $B = B_{\overline 0}\cup B_{\overline 1}$ consists of homogeneous elements then $B_{\overline{0}}^\bullet = B_{\overline{0}}$ and $B_{\overline{1}}^\bullet = -B_{\overline{1}}$. We can conclude that $(iB_{\overline{1}})^\bullet = iB_{\overline{1}}$. 
Given that $B_{\overline{0}} \cup iB_{\overline{1}}$ is a $\bullet$-invariant basis and, for any admissible element, its odd and even part are admissible, then the result follows.
\end{proof}

The ungraded centre of the group algebra $\bbc\tilde{W}$ is spanned by the conjugacy class sums. The algebra $\bbc \tilde{W}_-$ is a quotient and a subalgebra of $\bbc\tilde{W}$. The ungraded centre of $\bbc\tilde{W}_-$ is exactly $\frac{1-\theta}{2}\ugZ(\bbc\tilde{W})$.
\begin{lemma}\label{l:noodd}
The space $Z^\epsilon_{\overline{1}}(\bbc\tilde{W}_-)$ is zero.
\end{lemma}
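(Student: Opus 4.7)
My plan is to prove the statement by leveraging the spanning-set description of $Z^\epsilon(\bbc\tilde W_-)$ established earlier in this subsection, combined with Lemma~\ref{l:oddadm1}. The placement of this lemma---immediately after Proposition~\ref{l:oddadm} for the $d$ odd case, which explicitly admits a potentially non-trivial $i\,\ugZ_{\overline{1}}(\bbr\tilde W_-)$ summand in the admissible elements---indicates that the intended statement concerns the $d$ even regime, and this is the setting in which I would argue.

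First I would invoke the theorem of this subsection, which identifies $Z^\epsilon(\bbc\tilde W_-) = \rho(Z^\theta(\bbc\tilde W))$ when $d$ is even, together with the proposition characterising $Z^\theta(\bbc\tilde W)$ as the span of the $\theta$-twisted class sums $T^\theta_{\tilde g}$. Applying $\rho$ and using the formula $T^{(-1)}_{\tilde g}=\rho(T^\theta_{\tilde g})$, we obtain the explicit spanning set
\[
Z^\epsilon(\bbc\tilde W_-) \;=\; \Span\bigl\{ T^{(-1)}_{\tilde g} \,:\, \tilde g \in \tilde W \bigr\}, \qquad T^{(-1)}_{\tilde g} \;=\; \sum_{\tilde w \in \tilde W}(-1)^{|\tilde w|}\,\rho(\tilde w^{-1}\tilde g\tilde w).
\]

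Next I would observe that the parity map $\tilde w \mapsto |\tilde w|$ is a group homomorphism $\tilde W \to \bbz_2$ (equivalently, the composition of the projection $p\colon \tilde W \to W$ with the sign character of $W$), and hence is invariant under conjugation. Consequently each summand $\rho(\tilde w^{-1}\tilde g \tilde w)$ of $T^{(-1)}_{\tilde g}$ is $\bbz_2$-homogeneous of parity $|\tilde g|$, and so is $T^{(-1)}_{\tilde g}$ itself. It follows that the odd component $Z^\epsilon_{\overline{1}}(\bbc\tilde W_-)$ is linearly spanned by those $T^{(-1)}_{\tilde g}$ with $\tilde g \in \tilde W_{\overline{1}}$.

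Finally, I would appeal to Lemma~\ref{l:oddadm1}, which gives $T^{(-1)}_{\tilde g}=0$ for every odd $\tilde g \in \tilde W$, to conclude that this span vanishes. The proof is essentially a bookkeeping assembly of the prior structural results of the subsection; there is no substantive obstacle beyond the observation that the spanning generators inherit their $\bbz_2$-parity from $\tilde g$, which reduces the claim directly to Lemma~\ref{l:oddadm1}.
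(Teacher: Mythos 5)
Your proof is correct and follows essentially the same route as the paper: identify the spanning set $\{T^{(-1)}_{\tilde g}\}$ of $Z^\epsilon(\bbc\tilde W_-)$ (for $d$ even), note each spanning element is homogeneous of parity $|\tilde g|$, and apply Lemma~\ref{l:oddadm1} to kill the odd ones. You merely make explicit the conjugation-invariance of the parity, which the paper leaves implicit.
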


\begin{proof}
 The space $\grZ_1(\bbc \tilde{W})$ is the span of the elements $T^{(-1)}_{\tilde{g}}$ for odd $\tilde{g}$. However, Lemma \ref{l:oddadm1} shows that these elements are always zero.
\end{proof}

\begin{proposition}
Let $d$ be even and let $\fA$ be the set of the admissible elements in $\bbc\tilde{W}_-$. In this case $\epsilon(w) = -1$ for all odd $w$. Then $\fA =  Z^\epsilon_{\overline{0}} (\bbr\tilde{W}_-) $.

\end{proposition}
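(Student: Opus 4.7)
The plan is to combine Lemma \ref{l:noodd} with an argument entirely parallel to the odd case (Proposition \ref{l:oddadm}). Since $d$ is even, Lemma \ref{l:noodd} gives $Z^\epsilon_{\overline 1}(\bbc\tilde W_-)=0$, so every $\epsilon$-central element is already even. Consequently $\fA \subseteq Z^\epsilon_{\overline 0}(\bbc\tilde W_-)$, and the admissibility condition reduces to self-adjointness inside the even part of the $\epsilon$-centre.

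To identify the $\bullet$-fixed part of $Z^\epsilon_{\overline 0}(\bbc\tilde W_-)$, I would reuse the basis $B=B_{\overline 0}\cup B_{\overline 1}$ of $\rho(\bbc\tilde W)\cong \bbc\tilde W_-$ from the proof of Proposition \ref{l:oddadm}: for each $w\in W$ fix an expression $w=s_{\alpha_1}\cdots s_{\alpha_k}$ in simple reflections and take $b=w\otimes \gamma(\alpha_1/|\alpha_1|)\cdots\gamma(\alpha_k/|\alpha_k|)$. A direct computation from $w^\ast=w^{-1}$ and $\gamma(\alpha/|\alpha|)^\ast=-\gamma(\alpha/|\alpha|)$ yields
\[
b^\bullet \;=\; (-1)^k\,\bigl(s_{\alpha_k}\cdots s_{\alpha_1}\otimes \gamma(\alpha_k/|\alpha_k|)\cdots\gamma(\alpha_1/|\alpha_1|)\bigr),
\]
and the element in brackets is, up to a sign $\pm 1$ accounting for the discrepancy between the reversed word and the preselected expression for $w^{-1}$, another element of $B$ of the same parity $k$. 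When $k$ is even the overall factor $(-1)^k$ equals $+1$, so $\bullet$ permutes $B_{\overline 0}$ up to real signs $\pm 1$.

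It follows that $\Span_{\bbr}(B_{\overline 0})$ is a $\bullet$-invariant real form of $\Span_{\bbc}(B_{\overline 0}) = \bbc\tilde W_-|_{\overline 0}$ whose $\bullet$-fixed part is exactly this real span; by definition this real form is $\bbr\tilde W_-|_{\overline 0}$. Since $\bullet$ preserves $\epsilon$-centrality (being an anti-involution with $\epsilon(w^{-1})=\epsilon(w)$), intersecting with $Z^\epsilon$ gives $\fA = Z^\epsilon_{\overline 0}(\bbr\tilde W_-)$, as required.

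The main obstacle is the bookkeeping in the computation above: the reversed word $s_{\alpha_k}\cdots s_{\alpha_1}$ is a presentation of $w^{-1}$ that may disagree with the preselected basis word for $w^{-1}$, and the two lifts to $\tilde W$ then differ by a power of the central element $\theta$. One must invoke the presentation \eqref{e:PinPresentation} to rewrite the reversed monomial in terms of the chosen basis element, absorbing the result into the real sign $\pm 1$. Once this sign is tracked, the remaining step is exactly the linear-algebra observation used in the odd case: self-adjointness with respect to a real $\bullet$-invariant basis of signs $\pm 1$ is equivalent to having real coefficients.
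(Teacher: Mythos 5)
Your proposal is correct and follows essentially the same route as the paper: the paper's proof likewise invokes Lemma \ref{l:noodd} to kill the odd part of the $\epsilon$-centre and then reruns the basis argument of Proposition \ref{l:oddadm} to intersect the even part with the self-adjoint elements. If anything, you are more careful than the paper about the $\theta$-sign discrepancy between the reversed word and the chosen word for $w^{-1}$, a point the published proof glosses over.
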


\begin{proof}
The same mechanism as the proof of Lemma \ref{l:oddadm} apply here. Noting that the $\epsilon$-centre splits into homogenous parts; $Z^\epsilon(\bbc\tilde{W}_-) = \anZ_{\overline{0}}(\bbc \tilde{W}_-) \oplus \grZ_{\overline{1}}(\bbc\tilde{W}_-) $. The Lemma follows from intersecting with the real subspace of self adjoint elements and Lemma \ref{l:noodd} which states that $\grZ_{\overline{1}}(\bbc\tilde{W}_-) = Z^\epsilon_{\overline{1}}(\bbc\tilde{W}_-)=0$.\end{proof}

\begin{theorem}\label{t:oddadm}
Let $d$ be odd. Let $S_{\mathrm{split}}$ be the a set of representatives of conjugacy classes in $W$ which split in $\tilde{W}$. That is,
\[S_{\mathrm{split}} = \{\text{Representatives of conjugacy classes}\} \cap\{g \in W: C(g) \text{ splits in } \tilde{W}\}
\]
Then the admissible elements in $\bbc\tilde{W}_-$ is equal to the real span of the linearly independent set
\[
\{\sqrt{-1}^{|\tilde{g}|}\frac{1-\theta}{2}T_{\tilde{g}}:  g \in S_{\mathrm{split}} \}.
\]
\end{theorem}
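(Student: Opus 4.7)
The strategy is to combine Proposition~\ref{l:oddadm}, which for $d$ odd identifies $\fA = \ugZ_{\overline{0}}(\bbr\tilde W_-) + i\,\ugZ_{\overline{1}}(\bbr\tilde W_-)$, with an explicit real basis of the ungraded centre $\ugZ(\bbr\tilde W_-)$ obtained from conjugacy-class sums of $\tilde W$. Once such a basis is in hand, homogeneously graded by the parities $|\tilde g|$, the theorem will be a direct bookkeeping consequence.

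First I produce a basis of $\ugZ(\bbc\tilde W_-)$. The ungraded centre of $\bbc\tilde W$ has the standard basis $\{T_{\tilde g}\}$ running over conjugacy classes of $\tilde W$, and since $\bbc\tilde W_-$ is isomorphic to $\tfrac{1-\theta}{2}\bbc\tilde W$, one has $\ugZ(\bbc\tilde W_-) = \tfrac{1-\theta}{2}\ugZ(\bbc\tilde W)$. The projection $\tfrac{1-\theta}{2}T_{\tilde g}$ vanishes exactly when $\theta T_{\tilde g} = T_{\tilde g}$, i.e.\ when the preimage $p^{-1}(C(g))$ in $\tilde W$ is a single $\tilde W$-class containing both $\tilde g$ and $\theta\tilde g$. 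Hence $\tfrac{1-\theta}{2}T_{\tilde g}\neq 0$ if and only if $C(g)$ splits in $\tilde W$, and distinct choices of $g\in S_{\mathrm{split}}$ yield linearly independent images. This gives a $\bbc$-basis $\{\tfrac{1-\theta}{2}T_{\tilde g}\}_{g\in S_{\mathrm{split}}}$ of $\ugZ(\bbc\tilde W_-)$, and because each such element has integer coefficients with respect to the real basis $B$ appearing in Proposition~\ref{l:oddadm}, the same set is also an $\bbr$-basis of $\ugZ(\bbr\tilde W_-)$.

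Next I track the $\bbz_2$-grading. Since $\rho(\theta) = -1$ is of even Clifford degree, the parity $|\tilde g|$ does not depend on the choice of lift within $\{\tilde g, \theta\tilde g\}$ and hence is well-defined from $g\in S_{\mathrm{split}}$. Each element $\tfrac{1-\theta}{2}T_{\tilde g}$ is therefore $\bbz_2$-homogeneous of degree $|\tilde g|$, so splitting the real central basis by parity yields
\[
\ugZ_{\overline{0}}(\bbr\tilde W_-) = \bbr\text{-span}\{\tfrac{1-\theta}{2}T_{\tilde g} : g \in S_{\mathrm{split}},\ |\tilde g|=0\},
\]
together with the analogous description of $\ugZ_{\overline{1}}(\bbr\tilde W_-)$. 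Feeding this into Proposition~\ref{l:oddadm} and collecting terms, $\fA$ is the real span of $\{\sqrt{-1}^{|\tilde g|}\tfrac{1-\theta}{2}T_{\tilde g}\}_{g\in S_{\mathrm{split}}}$, as claimed.

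Linear independence transfers directly from the $\bbc$-linear independence of the centre basis, since the scalars $\sqrt{-1}^{|\tilde g|}$ are non-zero. The only substantive step is the basis description of the centre, which amounts to identifying the kernel of $\tfrac{1-\theta}{2}$ on $\ugZ(\bbc\tilde W)$ via the splitting behaviour of classes under $p\colon \tilde W\to W$; the rest of the argument is the homogeneous bookkeeping that plugs this into Proposition~\ref{l:oddadm}.
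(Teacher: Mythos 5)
Your proposal is correct and follows essentially the same route as the paper: invoke Proposition~\ref{l:oddadm} to reduce to the even and odd parts of the ungraded centre of $\bbr\tilde W_-$, and then identify that centre with the span of the projected class sums $\tfrac{1-\theta}{2}T_{\tilde g}$, which are nonzero precisely for the classes that split in $\tilde W$. The paper's proof is just a terser version of this; your extra checks (the vanishing criterion, homogeneity of the class sums, and passage from the complex to the real form) are exactly the details it leaves implicit.
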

\begin{proof}
By Lemma \ref{l:oddadm} the admissible elements are the real spans of the even ungraded centre plus the imagine span of the odd ungraded centre. The proof follows from the observation that the ungraded centre $\ugZ\bbc\tilde{W}_-$ is spanned by $\{\frac{1-\theta}{2}T_{\tilde{g}}:\tilde{g} \in S_{\mathrm{split}}\}$.
\end{proof}
When $d$ is odd the real dimension of the admissible elements is equal to the number of irreducible, genuine projective representation of $W$. That is, the representations of $\tilde{W}$, which do not factor through $\bbc W$.

\begin{theorem}\label{t:evenadm}
Let $d$ be even. Let $S^\epsilon_{\mathrm{split}}$ be the a set of representatives of conjugacy classes in $\tilde{W}$ which split in $\tilde{W}_{\overline{0}}$. Then the admissible elements in $\bbc\tilde{W}_-$
has real  basis
\[\{T^{(-1)}_{\tilde{g}}: \tilde{g} \in S^\epsilon_{\mathrm{split}}\}.
\]
\end{theorem}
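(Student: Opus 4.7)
The plan is to follow the template of Theorem \ref{t:oddadm} but replace the odd-$d$ bookkeeping with the even-$d$ information obtained in Lemmas \ref{l:oddadm1} and \ref{l:splitsplit}. First I would invoke the proposition (just preceding) that for $d$ even the set of admissible elements $\fA$ equals $Z^\epsilon_{\overline 0}(\bbr\tilde W_-)$, i.e.\ the $\bullet$-self-adjoint vectors inside the even part of the $\epsilon$-centre. Combined with the theorem identifying $Z^\epsilon(\bbc\tilde W_-) = \rho(Z^\theta(\bbc\tilde W))$, this shows that $\fA$ sits inside the $\bbc$-span of the family $\{T^{(-1)}_{\tilde g} : \tilde g \in \tilde W\}$.

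Second, I would cut down the spanning set using the two vanishing results. Lemma \ref{l:oddadm1} eliminates every odd $\tilde g$, and Lemma \ref{l:splitsplit} eliminates the remaining even $\tilde g$ whose $\tilde W$-conjugacy class does not split in $\tilde W_{\overline 0}$. The surviving indices are precisely those forming $S^\epsilon_{\mathrm{split}}$. Linear independence over $\bbc$ is automatic from the explicit formula in the proof of Lemma \ref{l:splitsplit}, which expresses $T^{(-1)}_{\tilde g}$ (up to a nonzero constant) as a signed difference of the two $\tilde W_{\overline 0}$-subclass sums $\rho(C^+(\tilde g)) - \rho(C^-(\tilde g))$; distinct representatives live on disjoint $\tilde W$-conjugacy classes.

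Finally, I would verify self-adjointness of $T^{(-1)}_{\tilde g}$ for each $\tilde g \in S^\epsilon_{\mathrm{split}}$. Using $w^\ast = w^{-1}$ on $\rca_{t,c}$ and $\gamma^\ast = (-1)^{|\gamma|}\gamma^t$ on $\clif$, together with the product description of $\rho$, one computes $\rho(\tilde h)^\bullet = (-1)^{|\tilde h|}\rho(\tilde h^{-1})$; applying this termwise in the definition gives $(T^{(-1)}_{\tilde g})^\bullet = T^{(-1)}_{\tilde g^{-1}}$ for even $\tilde g$. Self-adjointness then reduces to the identity $T^{(-1)}_{\tilde g^{-1}} = T^{(-1)}_{\tilde g}$, which I would establish from the classical fact that every element of a finite real reflection group $W$ is conjugate to its inverse, combined with a careful lift of the conjugating element to $\tilde W_{\overline 0}$ compatible with the splitting $C(\tilde g) = C^+(\tilde g) \sqcup C^-(\tilde g)$.

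I expect this last step to be the main obstacle. The double cover introduces a $\theta$-ambiguity on inverses, so one must rule out two competing scenarios: that inversion could swap $C^+(\tilde g)$ with $C^-(\tilde g)$ (which would force $T^{(-1)}_{\tilde g}$ to be anti-self-adjoint and hence zero in $\fA$), or that $\tilde g^{-1}$ could land in the companion class $C(\theta \tilde g)$ rather than in $C(\tilde g)$. Showing that the splitting hypothesis encoded in $S^\epsilon_{\mathrm{split}}$ already obstructs these pathologies—via an argument comparing centralisers in $\tilde W$ versus $\tilde W_{\overline 0}$ and exploiting the presentation \eqref{e:PinPresentation}—is the technical heart of the proof.
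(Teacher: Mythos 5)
Your first two steps reproduce the paper's proof: reduce via the preceding proposition to $Z^\epsilon_{\overline{0}}(\bbr\tilde{W}_-)$, discard the odd part (Lemma \ref{l:noodd}, whose content is your Lemma \ref{l:oddadm1}), and extract the spanning set and its linear independence from the $\theta$-centre description together with Lemma \ref{l:splitsplit}. The third step --- the direct verification that each $T^{(-1)}_{\tilde{g}}$ is $\bullet$-self-adjoint, which you single out as the technical heart and leave unfinished --- is redundant once the preceding proposition is granted: $T^{(-1)}_{\tilde{g}}$ is a signed sum of elements of $\rho(\tilde{W})$ with integer coefficients, is $\bbz_2$-even and $\epsilon$-central, hence already lies in $Z^\epsilon_{\overline{0}}(\bbr\tilde{W}_-)$, which that proposition asserts equals $\fA$; self-adjointness is therefore automatic, and the paper's proof stops where your step two ends. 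The pathologies you describe (inversion swapping $C^+(\tilde{g})$ with $C^-(\tilde{g})$, or $\tilde{g}^{-1}$ landing in $C(\theta\tilde{g})$) would indeed force $(T^{(-1)}_{\tilde{g}})^\bullet = -T^{(-1)}_{\tilde{g}}$, but that would contradict the preceding proposition itself; so the verification you sketch, to the extent it is needed at all, belongs to the proof of that proposition (where the paper's $\bullet$-invariant basis argument is the relevant mechanism), not to this theorem. In short: your route is the paper's route, with an extra incomplete detour that the statement, as the paper has structured its prerequisites, does not require.
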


\begin{proof}
Similar to the proof of Theorem \ref{t:oddadm}. In this setting the admissible elements are the real spans of the even $\epsilon$-central elements plus the imaginary spans of the odd $\epsilon$-central elements, of which there are none (by Lemma \ref{l:noodd}). The even $\epsilon$-central elements are spanned by 
\[
\{T^{(-1)}_{\tilde{g}}: C(\tilde{g}) \subset \tilde{W} \text{ splits into two conjugacy classes in   } \tilde{W}_{\overline{0}}\}.
\]
The Theorem follows. 
\end{proof}


 For $n\in \bbz$, we state a couple of theorems describing when conjugacy classes of the symmetric group $S_n$ split when considered in $\tilde{S}_n$ and $\tilde{A}_n$. We then draw a corollary (\ref{c:oddsymmadm}) from Theorems \ref{t:oddadm} and \ref{t:evenadm} describing the admissible elements in $\bbc\tilde{W}_-$, for $W=S_n$ acting on any vector space $V \cong\bbc^d$. Note that the $W$-module $V$ need not be irreducible or essential. 
 \begin{theorem} \cite[p. 1721]{Sc11}
 The conjugacy classes of $S_n$ which split into two conjugacy classes of $\tilde{S}_n$ are precisely those with all cycles of odd length or those that are odd and have distinct cycles .  
 \end{theorem}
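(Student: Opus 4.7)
The standard strategy, going back to Schur~\cite{Sc11}, is to convert splitting into the triviality of a canonical sign character on the centraliser $C_{S_n}(g)$ and then to evaluate this character on a convenient generating set. Fix $g\in S_n$ with a lift $\tilde g\in\tilde S_n$. The preimage $p^{-1}(C(g))$ equals $C(\tilde g)\cup\theta C(\tilde g)$ and splits into two $\tilde S_n$-classes iff $\tilde g$ and $\theta\tilde g$ are not conjugate in $\tilde S_n$. Any element conjugating $\tilde g$ to either lift of $g$ must project into $C_{S_n}(g)$, and since $\theta$ is central with $\theta^2=1$, the assignment $\tilde h\,\tilde g\,\tilde h^{-1}=\sigma_g(h)\,\tilde g$ defines a homomorphism $\sigma_g\colon C_{S_n}(g)\to\{\pm 1\}$, independent of the choice of lift $\tilde h$ of $h$. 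Hence $C(g)$ splits iff $\sigma_g$ is trivial.

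Write $g=c_1\cdots c_r$ as disjoint cycles of lengths $\lambda_1,\ldots,\lambda_r$ and let $m_k=\#\{i:\lambda_i=k\}$, so that $C_{S_n}(g)\cong\prod_k\bigl((\bbz/k)^{m_k}\rtimes S_{m_k}\bigr)$. It suffices to evaluate $\sigma_g$ on two families of generators: each cycle $c_i$ itself, and for every $k$ with $m_k\geq 2$, an orientation-preserving involution $\tau=(a_1 b_1)(a_2 b_2)\cdots(a_k b_k)$ swapping two cycles of length $k$. I fix the lifts $\tilde c_i=\tilde s_{a_1 a_2}\tilde s_{a_2 a_3}\cdots\tilde s_{a_{\lambda_i-1}a_{\lambda_i}}$, $\tilde\tau=\tilde s_{a_1 b_1}\cdots\tilde s_{a_k b_k}$, and $\tilde g=\tilde c_1\cdots\tilde c_r$.

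For the cycle generators, the disjoint-reflection relation $\tilde s_\alpha\tilde s_\beta=\theta\,\tilde s_\beta\tilde s_\alpha$ (the $m_{\alpha\beta}=2$ case of~\eqref{e:PinPresentationmalpha}) lets one move $\tilde c_i$ past each $\tilde c_j$ with $j\neq i$ in $\tilde g$, and collecting the resulting powers of $\theta$ yields
\[
\sigma_g(c_i)=(-1)^{(\lambda_i-1)(n-r-1)},
\]
which is trivial iff $\lambda_i$ is odd or $g$ is odd (recall $\mathrm{sgn}(g)=(-1)^{n-r}$). For the swap generator, one computes $\tilde\tau\,\tilde c_m\,\tilde\tau^{-1}$ cycle by cycle: for $m\notin\{i,j\}$ anticommutation yields $\theta^{k(\lambda_m-1)}\tilde c_m$; for $m\in\{i,j\}$, iterated application of the braid relation $\tilde s_\alpha\tilde s_\beta\tilde s_\alpha=\theta\,\tilde s_{s_\alpha(\beta)}$ from~\eqref{e:PinPresentation} gives $\tilde c_j$ (resp.\ $\tilde c_i$) with no sign. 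Reordering the resulting product back into $\tilde g$ using the same disjoint-reflection relation produces
\[
\sigma_g(\tau)=(-1)^{k(n-r+1)-1},
\]
which is trivial iff $k$ is odd and $g$ is even. Combining both triviality conditions partitions into the two disjoint cases of the theorem: either every $\lambda_i$ is odd (whence $g$ is automatically even as a product of odd-length cycles, and every swap condition is automatic), or $g$ is odd with all $\lambda_i$ distinct (so that no swap generators exist).

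\textbf{Main obstacle.} All of the substance lies in the two sign computations: the $\theta$-factors from the Pin relations accumulate combinatorially across many reflections, and overlapping pairs require iterated use of the braid relation. Fixing a precise convention for the lifts (for instance a prescribed left-to-right ordered product in a chosen enumeration of the cycle supports) is essential so that the resulting signs combine cleanly into the formulas $\sigma_g(c_i)=(-1)^{(\lambda_i-1)(n-r-1)}$ and $\sigma_g(\tau)=(-1)^{k(n-r+1)-1}$.
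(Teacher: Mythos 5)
The paper offers no proof of this statement at all: it is quoted verbatim from Schur, so there is nothing internal to compare your argument against. Judged on its own, your proposal is the standard and correct route: reduce splitting of $C(g)$ to triviality of the sign character $\sigma_g$ on $C_{S_n}(g)$ (well defined because $\theta$ is central of order two), use the wreath-product structure of the centraliser to restrict attention to the cycle generators and the swap generators, and compute the accumulated powers of $\theta$ from the relations \eqref{e:PinPresentation}--\eqref{e:PinPresentationmalpha}. I checked your two sign formulas with the lifts you fix: moving $\tilde c_i$ past the other blocks gives $\theta^{(\lambda_i-1)\sum_{j\neq i}(\lambda_j-1)}=\theta^{(\lambda_i-1)(n-r-1)}$, and the swap computation gives $\theta^{k\sum_{m\neq i,j}(\lambda_m-1)+(k-1)^2}=\theta^{k(n-r+1)-1}$, so both formulas and the ensuing case analysis are right. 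One point you gloss over: in the case $m\in\{i,j\}$ it is not only the braid relation that enters; conjugating a single factor $\tilde s_{a_la_{l+1}}$ of $\tilde c_i$ by $\tilde\tau$ also picks up a $\theta$ from each of the $k-2$ transpositions of $\tilde\tau$ disjoint from it, so the correct count is $\theta^{k}$ per factor and $\theta^{k(k-1)}$ for the whole cycle --- which is indeed $1$, but only because $k(k-1)$ is even, not because no signs arise. With that bookkeeping made explicit (and a one-line remark that triviality of the homomorphism $\sigma_g$ on your generating set is both necessary and sufficient), the argument is complete.
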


  \begin{theorem}\cite{Sc11} \cite[Theorem 2.7]{St89}
 Let $\lambda$ be an even partition of $n$. The $\tilde{S}_n$ conjugacy classes $C_\lambda$ (or $C_\lambda^{\pm})$ if already split) split into two $\tilde{A}_n$ conjugacy classes iff $\lambda \in DP_n^+$. Here $DP_n^+$ is the set of distinct partitions of $n$ which are even. 
 
 \end{theorem}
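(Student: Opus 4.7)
The plan is to invoke the classical index-two splitting criterion: if $H\leq G$ with $[G:H]=2$ and $g\in H$, then the $G$-conjugacy class of $g$ is a single $H$-class when $\Cent_G(g)\not\subseteq H$, and splits into two equal $H$-classes when $\Cent_G(g)\subseteq H$. Applying this with $G=\tilde S_n$, $H=\tilde A_n$, and a lift $\tilde g$ of an even permutation $g\in S_n$ of cycle type $\lambda$, the theorem reduces to showing that $\Cent_{\tilde S_n}(\tilde g)\subseteq \tilde A_n$ if and only if $\lambda\in DP_n^+$. Since $\ker p=\{1,\theta\}\subseteq\tilde A_n$, this condition is equivalent to $p\bigl(\Cent_{\tilde S_n}(\tilde g)\bigr)\subseteq A_n$.

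Next I would identify the image $p(\Cent_{\tilde S_n}(\tilde g))$ inside $\Cent_{S_n}(g)$. For any $h\in \Cent_{S_n}(g)$ a lift $\tilde h$ satisfies $\tilde h\tilde g\tilde h^{-1}\in\{\tilde g,\theta\tilde g\}$, and the rule $h\mapsto \phi(h)$ defined by $\tilde h\tilde g\tilde h^{-1}=\phi(h)\tilde g$ (with $\theta$ identified with $-1$) gives a homomorphism $\phi\colon \Cent_{S_n}(g)\to\{\pm 1\}$ computable from the $\theta$-commutation relations in the presentations \eqref{e:PinPresentation} and \eqref{e:PinPresentationmalpha}. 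Then $p(\Cent_{\tilde S_n}(\tilde g))=\ker\phi$, so the task becomes to \emph{determine when $\ker\phi\subseteq A_n$}.

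Using the classical description $\Cent_{S_n}(g)\cong \prod_k (\mathbb{Z}/k\mathbb{Z}\wr S_{m_k})$, where $m_k$ is the multiplicity of the part $k$ in $\lambda$, I would then handle two cases. If $\lambda$ has only distinct parts (each $m_k\leq 1$), the centraliser $\Cent_{S_n}(g)$ is abelian and generated by the individual cycles of $g$; a direct computation, using that $\lambda$ is an even partition, shows that on this generating set $\phi$ and $\sgn$ coincide, so $\ker\phi\subseteq A_n$ and the class splits. If instead some part $k$ of $\lambda$ occurs with multiplicity $m_k\geq 2$, then $\Cent_{S_n}(g)$ contains the involution $\sigma=(a_1b_1)(a_2b_2)\cdots(a_kb_k)$ swapping two of the $k$-cycles, of sign $(-1)^k$. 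An explicit lift $\tilde\sigma$ can be checked to lie in $\ker\phi$; when $k$ is odd this already produces an odd centraliser outside $A_n$, and when $k$ is even one instead composes $\tilde\sigma$ with a suitably chosen rotation inside one of the $k$-cycles (of sign $(-1)^{k-1}=-1$) to produce an odd element still in $\ker\phi$. Either way $\ker\phi\not\subseteq A_n$ and the class does not split.

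The main obstacle will be the honest calculation of $\phi$ on the explicit generators of $\Cent_{S_n}(g)$: one has to express each generator as a word in simple reflections and then systematically track the $\theta$-factors produced by the braid and anticommutation relations when conjugating $\tilde g$. This bookkeeping is exactly what Schur \cite{Sc11} and Stembridge \cite{St89} carry out in full, and it is where the interplay between the distinctness and the parity of the parts of $\lambda$ crystallises into the clean criterion $\lambda\in DP_n^+$.
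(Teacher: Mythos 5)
The paper offers no proof of this statement (it is quoted from Schur and Stembridge), so I assess your sketch on its own terms. Your overall strategy --- the index-two splitting criterion, the reduction to the homomorphism $\phi\colon\Cent_{S_n}(g)\to\{\pm1\}$ recording the $\theta$-twist under conjugation, and the dichotomy between distinct and repeated parts --- is the standard and correct route, and your Case 1 is sound: for disjoint lifts one has $\tilde u\tilde v=\theta^{|u||v|}\tilde v\tilde u$, so for $g$ even each cycle $c_j$ satisfies $\phi(c_j)=(-1)^{|c_j|\sum_{i\ne j}|c_i|}=(-1)^{|c_j|}=\sgn(c_j)$, whence for distinct parts $\ker\phi=\Cent_{S_n}(g)\cap A_n$ and the class splits.

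The gap is in Case 2: your assertion that the swap $\sigma$ of two $k$-cycles lies in $\ker\phi$ is false when $k$ is even. In fact $\phi(\sigma)=(-1)^{k-1}$ always: writing $\tilde g=\tilde c\,\tilde c'\,\tilde h$, one has $\tilde\sigma\tilde c\tilde\sigma^{-1}=\epsilon\,\tilde c'$ and $\tilde\sigma\tilde c'\tilde\sigma^{-1}=\epsilon\,\tilde c$ with the \emph{same} sign $\epsilon$ (conjugation by the central element $\tilde\sigma^2$ is trivial, forcing $\epsilon_1\epsilon_2=1$), while $\tilde\sigma\tilde h\tilde\sigma^{-1}=\tilde h$ because $|h|\equiv|g|\equiv 0$; hence $\tilde\sigma\tilde g\tilde\sigma^{-1}=\epsilon^2\,\tilde c'\tilde c\,\tilde h=\theta^{(k-1)^2}\tilde g$. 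Concretely, for $g=(12)(34)$ in $S_4$ and $\sigma=(13)(24)$ a computation in the Clifford model gives $\tilde\sigma\tilde g\tilde\sigma^{-1}=\theta\tilde g$. As a result, for $k$ even your patch does not go through as justified: if $\phi(\sigma)$ were $1$, then composing with a rotation $r$ of sign $-1$ --- which by your own Case 1 computation has $\phi(r)=\sgn(r)=-1$ --- would yield $\phi(\sigma r)=-1$, i.e.\ an element \emph{outside} $\ker\phi$. Your construction happens to land on a correct witness only because the true value is $\phi(\sigma)=-1$, so that $\sigma r\in\ker\phi\setminus A_n$. The argument should be rewritten around the correct identity $\phi(\sigma)=(-1)^{k-1}=-\sgn(\sigma)$, after which both parities of $k$ produce an odd element of $\ker\phi$ and the non-splitting follows cleanly.
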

 
 \begin{corollary}
\label{l:typeAex}
Let $W=S_n$ and let $\{g\}$ be the set of elements in $S_n$ associated to an even partition $\lambda$ which has distinct cycles. Then $C(\tilde{g})$ splits in $\tilde{A}_n = (\tilde{S}_n)_{\overline{0}}$. Then from Lemma \ref{l:splitsplit}, $T^{(-1)}_g \neq 0$ and is an admissible element. 
\end{corollary}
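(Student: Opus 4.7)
The plan is to verify the two hypotheses of Lemma \ref{l:splitsplit} in turn and then invoke Theorem \ref{t:evenadm} to obtain admissibility. First I would observe that an even partition $\lambda$ of $n$ corresponds to an even permutation $g \in A_n$: a cycle of length $k$ has sign $(-1)^{k-1}$, so $\sgn(g) = (-1)^{n-\ell(\lambda)}$, which is $+1$ exactly when $\lambda$ is even in Schur's sense. Consequently any preimage $\tilde{g}\in p^{-1}(g)$ lies in $\tilde{A}_n = (\tilde{S}_n)_{\overline{0}}$, so $\tilde{g}$ is $\bbz_2$-even, which is the first hypothesis of Lemma \ref{l:splitsplit}.

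Next I would apply the second cited theorem, on class-splitting in $\tilde{A}_n$. Because $\lambda$ has distinct parts and is even, $\lambda\in DP_n^+$. Hence whichever $\tilde{S}_n$-conjugacy class $\tilde{g}$ represents (namely $C_\lambda$ if it did not split in $\tilde{S}_n$, or one of $C_\lambda^{\pm}$ if it did), that class decomposes into two $\tilde{A}_n$-conjugacy classes. This is exactly the second hypothesis of Lemma \ref{l:splitsplit}, so we conclude $T^{(-1)}_g \neq 0$ in $\bbc\tilde{W}_-$.

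Admissibility then follows immediately from Theorem \ref{t:evenadm}: for $d$ even, the nonvanishing elements $T^{(-1)}_{\tilde{h}}$ indexed by representatives $\tilde{h}\in S^\epsilon_{\mathrm{split}}$ form a real basis of the set $\fA$ of admissible elements of $\bbc\tilde{W}_-$, so in particular the nonzero element $T^{(-1)}_g$ produced above is admissible.

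The only step which is not completely mechanical is the self-adjointness requirement $(T^{(-1)}_g)^\bullet = T^{(-1)}_g$, which is absorbed by the invocation of Theorem \ref{t:evenadm}. If one wished to check it independently, the identity $\rho(\tilde{w})^\bullet = (-1)^{|\tilde{w}|}\rho(\tilde{w}^{-1})$ (derived from $e_i^\bullet = -e_i$ applied to a product of normalized reflection generators) gives $(T^{(-1)}_g)^\bullet = (-1)^{|\tilde{g}|}T^{(-1)}_{g^{-1}}$, after which one uses that $\tilde{g}$ is even together with the fact that every element of $S_n$ is conjugate to its inverse to reduce back to $T^{(-1)}_g$. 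I expect this bookkeeping to be the only mildly fiddly point, but it poses no genuine obstacle since it is already encoded in Theorem \ref{t:evenadm}.
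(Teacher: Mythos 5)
Your proposal is correct and follows exactly the route the paper intends: the corollary is stated without a separate proof precisely because it is the combination of the quoted splitting theorem for $\tilde{A}_n$ (via $\lambda\in DP_n^+$), Lemma \ref{l:splitsplit}, and Theorem \ref{t:evenadm}, which is what you verify step by step. Your added checks — that an even partition yields $\tilde{g}\in(\tilde{S}_n)_{\overline{0}}$, and the optional direct verification of self-adjointness — are consistent with the paper's conventions and do not change the argument.
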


\begin{corollary}\label{c:oddsymmadm}
Let $d$ be odd and $W = S_n$ acting on $V \cong \bbc^d$, then the admissible elements in $\bbc\tilde{W}_-$ is equal to the real span of the set \[\{T_{\tilde{g}}: g \text{ has partition with no even cycles}\}.\]
Let $d$ be even and $W = S_n$, then the admissible elements in $\bbc\tilde{W}_-$ is equal to the real span of the set $\{T^{(-1)}_{\tilde{g}}: g \text{ has partition } \lambda \in DP_n^+\}$.
\end{corollary}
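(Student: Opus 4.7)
The plan is to specialize Theorems~\ref{t:oddadm} and~\ref{t:evenadm} to $W=S_n$ and then read off the indexing data from the two splitting criteria (Schur and Schur--Stembridge) recalled immediately before the corollary. A preliminary observation is that for $W=S_n$ the reflections are the transpositions, so every $\tilde{s}_\alpha$ has $\bbz_2$-degree $\overline{1}$; hence the grading $|\tilde g|$ on $\tilde W$ coincides with the sign of $g$ viewed as a permutation. In particular, a permutation all of whose cycles have odd length is automatically even (an $r$-cycle has sign $(-1)^{r-1}$), so for those $\tilde g$ the parity factor $\sqrt{-1}^{|\tilde g|}$ appearing in Theorem~\ref{t:oddadm} equals $1$.

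For the case $d$ odd, I would apply Theorem~\ref{t:oddadm}, which expresses $\fA$ as the real span of the linearly independent family $\{\sqrt{-1}^{|\tilde g|}\tfrac{1-\theta}{2}T_{\tilde g} : g\in S_{\mathrm{split}}\}$. Restricting to cycle types with no even parts singles out, by the first cited splitting theorem, the subset of $S_{\mathrm{split}}$ consisting of class representatives all of whose cycles are odd. By the preliminary observation, these $g$ lie in $A_n$, so $\sqrt{-1}^{|\tilde g|}=1$ and the admissible generators reduce to the elements $T_{\tilde g}$ of the statement.

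For the case $d$ even, I would use Theorem~\ref{t:evenadm}, which provides a real basis of $\fA$ of the form $\{T^{(-1)}_{\tilde g} : \tilde g\in S^\epsilon_{\mathrm{split}}\}$, where $S^\epsilon_{\mathrm{split}}$ parametrises those conjugacy classes of $\tilde W=\tilde S_n$ whose preimage in $\tilde A_n=\tilde W_{\overline{0}}$ further splits. The second cited splitting criterion identifies this set precisely with $DP_n^+$, the set of distinct even partitions of $n$, which yields the second assertion.

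The main obstacle, if there is one, is the bookkeeping across three levels of splitting: $S_n \subset \tilde S_n$, $\tilde A_n \subset \tilde S_n$, and the interpretation of the parity factor $\sqrt{-1}^{|\tilde g|}$ through the sign of $g$. Once these dictionaries are in place, the corollary follows by direct substitution into Theorems~\ref{t:oddadm} and~\ref{t:evenadm}; no further computation should be required.
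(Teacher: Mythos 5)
Your overall strategy is exactly the intended one: the paper offers no separate argument for this corollary, which is meant to follow by substituting the two quoted splitting criteria into Theorems \ref{t:oddadm} and \ref{t:evenadm}, and your dictionary (for $W=S_n$ the reflections are transpositions, so $|\tilde g|$ is the sign of $g$, and a permutation with all cycles odd is even) is correct. The $d$ even half of your argument is complete: Lemma \ref{l:splitsplit} and Lemma \ref{l:oddadm1} reduce everything to classes of even elements splitting in $\tilde{W}_{\overline{0}}=\tilde A_n$, and the Schur--Stembridge criterion identifies these with $DP_n^+$.

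The $d$ odd half, however, has a gap that you half-acknowledge and then pass over. Schur's criterion says $S_{\mathrm{split}}$ consists of the classes with all cycles odd \emph{together with} the odd classes with distinct cycle lengths. Theorem \ref{t:oddadm} asserts that $\fA$ is the real span of $\sqrt{-1}^{|\tilde g|}\tfrac{1-\theta}{2}T_{\tilde g}$ over \emph{all} of $S_{\mathrm{split}}$, and for an odd $g$ with distinct cycle lengths the element $\tfrac{1-\theta}{2}T_{\tilde g}$ is nonzero precisely because the class splits (e.g.\ a transposition in $S_3$), so $i\,\tfrac{1-\theta}{2}T_{\tilde g}$ is a genuine admissible element lying in $i\,\ugZ_{\overline{1}}(\bbr\tilde{W}_-)$. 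By restricting to the ``no even cycles'' subset you only prove that the real span in the statement is \emph{contained in} $\fA$; to get the claimed equality you must either show that the odd split classes contribute nothing (they do contribute) or observe that the statement, read literally, omits the generators $i\,T_{\tilde g}$ for $g$ odd with distinct cycle lengths. As written, your argument does not close (or even flag) that discrepancy between the corollary's first assertion and what Theorem \ref{t:oddadm} actually yields.
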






\subsection{Non-zero \texorpdfstring{$\D_\C$}{}-cohomology}

\begin{proposition}
Suppose that $(\pi,X)$ is $\bullet$-unitary. Since $\Omega_\fosp$ and  $\C$ are self adjoint they must have positive real eigenvalues. 
\end{proposition}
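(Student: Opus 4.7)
The plan is to separate the statement into its two natural components: reality of the eigenvalues, which follows from self-adjointness on a positive-definite Hermitian space, and non-negativity, which I extract from two squared decomposition identities available inside $\tama$.

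First, I would verify self-adjointness of the two operators in question. For $\C$, the defining condition $\C^\bullet = \C$ in Definition \ref{d:Dirac_C}, combined with $\bullet$-Hermitianity of the form, immediately yields $(\pi(\C) u, v)_X = (u, \pi(\C) v)_X$. For $\Omega_\fosp$, the cleanest route is to combine Proposition \ref{p::Dsquare}, which gives $\Omega_\fosp = \D^2 - \tfrac{1}{4}$, with the already-established self-adjointness $\D^\bullet = \D$, so that $(\D^2)^\bullet = \D^2$ and hence $\Omega_\fosp^\bullet = \Omega_\fosp$; an equivalent direct computation in the generator formulas (\ref{e:osp12}) uses $H^\bullet = H$ and $(E^\pm)^\bullet = -E^\mp$, $(F^\pm)^\bullet = -F^\mp$. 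Once self-adjointness is in hand, the one-line spectral argument
\[
\lambda\, (v,v)_X = (\pi(A) v, v)_X = (v, \pi(A) v)_X = \overline{\lambda}\, (v,v)_X,
\]
valid for $A \in \{\Omega_\fosp, \C\}$ and any eigenvector $v$, forces $\lambda \in \bbr$ because $(v,v)_X > 0$.

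For the positivity part of the claim, I would exploit the two natural ``square root'' identities. Since $\D^\bullet = \D$, Proposition \ref{p::Dsquare} gives $\pi(\Omega_\fosp) + \tfrac{1}{4} = \pi(\D)^2 = \pi(\D)^\bullet \pi(\D)$, so for any eigenvector $v$ of $\pi(\Omega_\fosp)$ with eigenvalue $\lambda$,
\[
\bigl(\lambda + \tfrac{1}{4}\bigr) \|v\|_X^2 = \bigl(\pi(\D) v, \pi(\D) v\bigr)_X \geq 0 .
\]
Similarly, $\C^\bullet = \C$ gives $\pi(\C)^2 = \pi(\C)^\bullet \pi(\C)$, so that for any eigenvector $w$ of $\pi(\C)$ with eigenvalue $\mu$,
\[
\mu^2 \|w\|_X^2 = \bigl(\pi(\C) w, \pi(\C) w\bigr)_X \geq 0,
\]
consistent with $\mu \in \bbr$.

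The point to flag --- really a matter of interpretation rather than a genuine obstacle --- is that strict positivity of the eigenvalues of $\Omega_\fosp$ cannot hold on an arbitrary $\bullet$-unitary module: the identity $\D^2 = \Omega_\fosp + \tfrac{1}{4}$ only bounds the spectrum of $\Omega_\fosp$ below by $-\tfrac{1}{4}$. Accordingly, the word \emph{positive} in the statement is most naturally read as the non-negativity of the quadratic forms attached to the squares $\pi(\D)^2$ and $\pi(\C)^2$; the square-root identities then deliver, for the eigenvalues of $\Omega_\fosp$ and $\C$ themselves, the combination of reality (from self-adjointness) and the sharp lower bounds $\lambda \geq -\tfrac{1}{4}$ and $\mu^2 \geq 0$ recorded above.
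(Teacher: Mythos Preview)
The paper offers no proof of this proposition; it is simply asserted. Your argument is therefore strictly more than what the paper supplies, and it is correct as far as it goes. You are also right to flag the overreach in the statement: self-adjointness on a positive-definite Hermitian space yields only \emph{real} eigenvalues, and the identity $\D^2 = \Omega_\fosp + \tfrac{1}{4}$ together with $\D^\bullet = \D$ gives only the lower bound $\lambda \geq -\tfrac14$ for the spectrum of $\Omega_\fosp$, while nothing beyond reality follows for $\C$ itself. What the paper actually uses immediately afterwards is that $\Omega_\fosp + \tfrac14 = \D^2$ and $\rho(\C)^2$ are non-negative self-adjoint operators with real (hence simultaneously diagonalisable) spectrum, and your square arguments deliver exactly that.
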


\begin{proposition}\label{p:nonzerocoh}
Let $(\pi,X)$ be a $\bullet$-unitary module for $\tama$. Suppose that there exists an admissible element $\C$ such that $\pi(\rhoC) \neq 0$. Then, there exists an admissible $\C \in \bbc \tilde{W}_-$ such that $H(X,\C) \neq 0$.
\end{proposition}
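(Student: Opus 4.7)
The plan is to reduce the problem to finding a common eigenvector of $\pi(\D)$ and $\pi(\rho(\C))$ by exploiting a key commutativity, and then to rescale $\C$ on that eigenspace to obtain an admissible element whose associated Dirac-type operator has non-trivial kernel.

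First, by Proposition \ref{p:unitarycoh}, for every admissible $\C'$ the $\bullet$-unitarity of $X$ gives $H(X,\C')=\ker\pi(\D_{\C'})$, so the task reduces to producing an admissible $\C'$ with $\ker\pi(\D_{\C'})\neq 0$. The key observation I would establish is that every admissible element $\mathcal{E}\in\fA$ satisfies $\epsilon(\rho(\mathcal{E}))=1$. When $d$ is odd this is immediate from the definition of $\epsilon$. When $d$ is even, Lemma \ref{l:noodd} forces $Z^\epsilon_{\overline{1}}(\bbc\tilde W_-)=0$, so every admissible element is $\bbz_2$-even, giving $\epsilon(\rho(\mathcal{E}))=(-1)^0=1$. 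In either case, $\pi(\D)$ and $\pi(\rho(\C))$ commute.

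Next, since $\D^\bullet=\D$ and $\C^\bullet=\C$, on the $\bullet$-unitary module $X$ the operators $\pi(\D)$ and $\pi(\rho(\C))$ are commuting self-adjoint operators with real spectra, and are therefore simultaneously diagonalisable. Decompose
\[
X\;=\;\bigoplus_{\mu,\lambda\in\bbr}X_{\mu,\lambda},
\]
where $X_{\mu,\lambda}$ is the common eigenspace on which $\pi(\D)$ acts by $\mu$ and $\pi(\rho(\C))$ by $\lambda$. The hypothesis $\pi(\rho(\C))\neq 0$ ensures that some $X_{\mu,\lambda}$ with $\lambda\neq 0$ is nonzero (otherwise $\pi(\rho(\C))$ would have only the eigenvalue $0$ and, being diagonalisable, would vanish).

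For such a pair $(\mu,\lambda)$, set
\[
\C'\;=\;-\frac{\mu}{\lambda}\,\C\,\in\,\bbc\tilde W_-.
\]
Since $\mu/\lambda\in\bbr$ and the two conditions defining $\fA$ ($\epsilon$-centrality and $\bullet$-invariance) are each preserved by real scalar multiplication, $\C'$ is admissible (the degenerate case $\mu=0$ simply yields $\C'=0\in\fA$). For any $0\neq v\in X_{\mu,\lambda}$ one then has $\pi(\D_{\C'})v=\mu v-(\mu/\lambda)\lambda v=0$, so $\ker\pi(\D_{\C'})\neq 0$, and hence $H(X,\C')\neq 0$ by the reduction in the first step.

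The principal conceptual hurdle is the uniform statement $\epsilon(\rho(\mathcal{E}))=1$ for admissible $\mathcal{E}$: it is what turns the a priori $\bbz_2$-graded interaction between $\D$ and $\rho(\C)$ into a genuine commutation, enabling simultaneous diagonalisation. Once this is in hand the remainder is elementary spectral theory combined with the closure of $\fA$ under real scalars (implicitly, as elsewhere in the paper, we are working with modules on which $\pi(\D)$ and $\pi(\rho(\C))$ admit an honest eigenspace decomposition).
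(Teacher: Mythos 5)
Your proof is correct and follows essentially the same route as the paper: both arguments reduce via $\bullet$-unitarity to showing $\ker\pi(\D_{\C'})\neq 0$ for some real rescaling $\C'$ of $\C$, and both rest on the commutation of $\D$ with $\rho(\C)$ together with simultaneous real eigenvalues of these self-adjoint operators. Your version is a slight streamlining — you cancel the eigenvalues of the linear operator $\D_{\C'}$ directly on a joint eigenvector, where the paper factors the quadratic operator $\D_{-\lambda\C}\D_{\lambda\C}=\Omega_\fosp+\tfrac14-\lambda^2\rho(\C)^2$ and concludes one factor has a kernel — and your explicit verification that $\epsilon(\rho(\C))=1$ for every admissible $\C$ (via Lemma \ref{l:noodd} when $d$ is even) makes precise a commutation the paper uses only implicitly.
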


\begin{proof}
Since $X$ is Hermitian then $H(X,\C) = \ker(\D_C) = \ker (\D_C^2)$. We study the kernel of the operator $(\D_\C -2\rho(\C)))\D_\C =\Omega_\fosp +\frac14 + \rho(\C)^2$. The elements $\Omega_\fosp$ and $\rho(\C)$ commute, hence have simultaneous eigenvalues. Given $\pi(\rhoC) \neq 0$ it has a positive real eigenvalues. One can modify $\D_\C$ to $\D_{\lambda\C}$ to ensure that 
\[
\Omega_\fosp +\frac14 + \lambda^2\rho(\C)^2
\]
has a non-zero kernel. This then proves that there exists a non-zero kernel for the operator $(\D_\C -(1+\epsilon(\rho(\C))\rho(\C))\D_\C$. It is impossible for both $\D_\C$ and $\D_\C-(1+\epsilon(\rho(\C))\rho(\C) = \D_{-(\epsilon(\rho(\C))\C}$ to have zero kernel.   Hence there exists an admissible element which gives non-zero $\D_\C$-cohomology.
\end{proof}

\noindent If $W=S_d$, then $\ugZ_{\overline{0}}(\bbr\tilde{W}_-)$ is the symmetric polynomials in the squares of the Jucys-Murphy elements. Furthermore, $\ugZ_{\overline{1}}(\bbr\tilde{W}_-)$ is the projection of odd central elements in $\bbc\tilde{W}$, which since $\bbc\tilde{W}$ is a group algebra can be written as conjugacy class sums. It is shown in \cite{SV08} and \cite{Ca19} that these central elements act by a positive integer or half integer on irreducible modules. This observation, in combination with Proposition \ref{p:nonzerocoh} proves that if $W=S_d$, for every $\bullet$-Hermitian module $X$, there exists a $\D_\C$ with non-zero $\D_\C$-cohomology. 

\begin{example}

Let $W$ contain $(-1)_V$, let us show that the action of the admissible element $w_0\otimes \Gamma$ acts on $\tama$ representations with nonzero eigenvalues. We note that $w_0\otimes \Gamma$ is invertible and self adjoint. Therefore, on any $\bullet$-Hermitian module $(\pi,X)$ the eigenvalues of $\pi(w_0\otimes \Gamma)$ must be positive reals. Therefore, if $d$ is even and $(-1)_V \in W$, any $\bullet$-Hermitian $\tama$-module $(\pi,X)$ has non-zero $\Gamma$-cohomology for the correct choice of $\C$.
\end{example}

\section*{Acknowledgements}
We thank the Department of Mathematics, University of Manchester and both the Department of Applied Mathematics, Computer Science and Statistics, and the Department of Electronics and Information Systems at Ghent University for their hospitality during the research visits throughout the preparation of this manuscript. This research was supported by the Heilbronn Institute for Mathematical Research (K.C.), the special research fund (BOF) from Ghent University [BOF20/PDO/058] (M.D.M.), and by a postdoctoral fellowship, fundamental research, of the Research
Foundation – Flanders (FWO) [grant number 12Z9920N] (R.O.). 
This support is gratefully acknowledged.

\bibliographystyle{abbrv}

\bibliography{References}

\small 

\noindent Kieran Calvert, \texttt{kieran.calvert@manchester.ac.uk}\\
Department of Mathematics, 
University of Manchester,
Alan Turing Building,
Oxford Road,
Manchester, UK.\\

\noindent Marcelo De Martino,
\texttt{marcelo.goncalvesdemartino@ugent.be}\\
Department of Electronics and Information Systems, 
Ghent University, 
Krijgslaan 281, Building S8, 9000 Gent, Belgium.\\

\noindent Roy Oste, 
\texttt{Roy.Oste@UGent.be}\\
Department of Applied Mathematics, Computer Science and Statistics, 
Ghent University, 
Krijgslaan 281, Building S9, 9000 Gent, Belgium.
\end{document}